\setlist[enumerate]{label={\upshape(\alph*)}}
\tikzstyle{vertex}=[circle, draw, inner sep=0pt, minimum size=5pt,fill=black]
\newcommand{\vertex}{\node[vertex]}
\tikzstyle{hollowvertex}=[circle, draw, inner sep=0pt, minimum size=5pt]
\newcommand{\hollowvertex}{\node[hollowvertex]}
\tikzstyle{namedvertex}=[circle, draw, inner sep=1pt, minimum size=12pt]
\tikzstyle{namedvertexabove}=[circle, draw, inner sep=1pt, minimum size=12pt,above]
\tikzstyle{phantomvertex}=[circle, draw, inner sep=0pt, minimum size=4pt,color=white]
\tikzset{->-/.style={decoration={
  markings,
  mark=at position #1 with {\arrow{>}}},postaction={decorate}}}
\newcommand{\od}{\mbox{od}}
\newcommand{\id}{\mbox{id}}
\newcommand{\kb}{\bar{\kappa}}
\newcommand{\kbm}{\bar{\kappa}_{\max}}
\newcommand{\Km}{K_{\max}}
\newcommand{\lb}{\bar{\lambda}}
\newcommand{\lbm}{\bar{\lambda}_{\max}}
\newtheorem{theorem}{Theorem}[section]
\newtheorem{lemma}[theorem]{Lemma}
\newtheorem{corollary}[theorem]{Corollary}
\newtheorem{observation}[theorem]{Observation}
\tikzset{->-/.style={decoration={markings,
  mark=at position #1 with {\arrow{>}}},postaction={decorate}}}
\theoremstyle{definition}
\newtheorem{conjecture}[theorem]{Conjecture}
\newtheorem{problem}[theorem]{Problem}
\newenvironment{example}
  {\pushQED{\qed}\examplex}
  {\popQED\endexamplex}
\theoremstyle{remark}
\newcommand{\KBM}{\mathop{\bar{\kappa}_{\max}}}
\newcommand{\KAPPASUM}{\theta}
\begin{document}

\title{The maximum average connectivity among all orientations of a graph}
\author{Roc\'{i}o M.\ Casablanca\\
Universidad de Sevilla\\
\small \url{rociomc@us.es}\\
Peter Dankelmann\\ University of Johannesburg\\
\small \url{pdankelmann@uj.ac.za}\\
Wayne Goddard \\ Clemson University and University of Johannesburg\\
\small \url{goddard@clemson.edu}\\
Lucas Mol and Ortrud Oellermann\thanks{Supported by an NSERC Grant CANADA, Grant number RGPIN-2016-05237}\\
University of Winnipeg\\
\small \url{l.mol@uwinnipeg.ca,
 o.oellermann@uwinnipeg.ca}}
\date{}

\maketitle

\begin{abstract}
\noindent
For distinct vertices $u$ and $v$ in a graph $G$, the {\em connectivity} between $u$ and $v$, denoted $\kappa_G(u,v)$, is the maximum number of internally disjoint $u$--$v$ paths in $G$. The {\em average connectivity} of $G$, denoted $\overline{\kappa}(G),$ is the average of $\kappa_G(u,v)$ taken over all unordered pairs of distinct vertices $u,v$ of $G$. Analogously, for a directed graph $D$, the {\em connectivity} from $u$ to $v$, denoted $\kappa_D(u,v)$, is the maximum number of internally disjoint directed $u$--$v$ paths in $D$.  The {\em average connectivity} of $D$, denoted $\overline{\kappa}(D)$, is the average of $\kappa_D(u,v)$ taken over all ordered pairs of distinct vertices $u,v$ of $D$. An {\em orientation} of a graph $G$ is a directed graph obtained by assigning a direction to every edge of $G$. For a graph $G$, let $\overline{\kappa}_{\max}(G)$ denote the maximum average connectivity among all orientations of $G$. In this paper we obtain bounds for $\overline{\kappa}_{\max}(G)$ and for the ratio $\overline{\kappa}_{\max}(G)/\overline{\kappa}(G)$ for all graphs $G$ of a given order and in a given class of graphs. Whenever possible, we demonstrate sharpness of these bounds. This problem had previously been studied for trees.  We focus on the classes of cubic $3$-connected graphs, minimally $2$-connected graphs, $2$-trees, and maximal outerplanar graphs.
\end{abstract}

\section{Introduction}
In this article, a graph is finite, loopless, and contains no multiple edges.  An {\em orientation} of a graph $G$ is a directed graph obtained by assigning a direction to every edge of $G$.  Connectedness properties of orientations of graphs have been studied in a variety of different settings.  Probably the most well-known and oldest result in this area is Robbins' Theorem~\cite{Robbins1939}, which states that every $2$-edge-connected graph has a {\em strong orientation}, i.e., an orientation with the property that for every pair $u,v$ of distinct vertices of the graph, there is both a directed $u$--$v$ path and a directed $v$--$u$ path.  Nash-Williams~\cite{Nash-williams1960} extended this result by showing that every $2k$-edge-connected graph has a {\em strongly} $k$-{\em edge-connected} orientation, i.e., an orientation for which there exist $k$ edge-disjoint paths from $u$ to $v$ for every ordered pair $(u,v)$ of distinct vertices of the graph.  Mader~\cite{Mader1978} also established this result using his so-called lifting theorem. In light of these results on edge connectivity, it is natural to ask what can be said about the connectivity of an orientation of a graph in terms of its connectivity.  Thomassen~\cite{Thomassen1989} proposed the following conjecture.

\begin{conjecture}\label{thomassenconj}
For every positive integer $k$, there exists a smallest positive integer $f(k)$ such that every $f(k)$-connected graph has a strongly $k$-connected orientation.
\end{conjecture}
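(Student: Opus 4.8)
The plan is to build on the two ingredients already in hand. Robbins' theorem settles the case $k=1$: a $2$-connected graph is $2$-edge-connected, hence has a strong orientation, and a strongly connected digraph on at least two vertices is strongly $1$-connected, so $f(1)=2$ (a tree shows $f(1)\ge 2$). The Nash--Williams/Mader result handles the edge-connectivity analogue, giving a strongly $k$-edge-connected orientation for every $2k$-edge-connected graph. My natural first attempt is to import the lifting machinery from the edge setting: Mader's lifting theorem splits off a pair of edges at a non-terminal vertex while preserving all pairwise edge-connectivities, and iterating such liftings reduces a graph to a small multigraph on which a suitable orientation can be found by hand and then pushed back up through the liftings. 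I would first try to replay this reduction for vertex connectivity, taking $f(k)$ large, splitting off pairs of edges (or peeling off low-degree vertices), orienting the reduced object, and reconstructing.

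The second step would be to exploit the standard reduction from vertex-disjoint to arc-disjoint paths via the in/out gadget: replace each vertex $v$ by an arc $v^{\mathrm{in}}\to v^{\mathrm{out}}$, so that internally disjoint directed $u$--$v$ paths in $D$ correspond to arc-disjoint paths in the split digraph. If an orientation of $G$ could be chosen so that the associated split digraph is $k$-arc-connected, then Menger's theorem for digraphs would deliver the required $k$ internally disjoint directed paths for every ordered pair, and it would remain only to translate ``$k$-arc-connected split digraph'' back into a connectivity hypothesis on $G$ itself.

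Should the direct approaches stall, I would pursue structural or probabilistic routes. One option is to use the Nash--Williams/Tutte theorem to extract many edge-disjoint spanning connected subgraphs from a highly connected $G$, orient them so that together they supply independent path systems for all ordered pairs, and bootstrap strong $k$-connectivity. Another is to orient the edges of $G$ uniformly at random and estimate, via Menger and a union bound over all potential small directed cuts, the probability that no cut of size less than $k$ survives. Either route would most likely only bound $f(k)$ from above rather than determine it exactly.

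The hard part---and the reason this remains a conjecture rather than a theorem---is precisely that vertex connectivity admits no faithful analogue of Mader's lifting theorem: splitting off a pair of edges can destroy the connectivity between far-apart pairs, so the inductive reduction that drives the edge case has no counterpart here. The vertex-splitting gadget fails for a dual reason: an orientation of $G$ induces a \emph{highly constrained} orientation of the split digraph, since every internal arc $v^{\mathrm{in}}\to v^{\mathrm{out}}$ is forced and the two potential arcs arising from each edge of $G$ are coupled, so one cannot simply invoke the edge-connectivity orientation theorem on the gadget. Overcoming these obstructions---finding the right substitute for lifting, or a genuinely global argument that produces $k$ internally disjoint directed paths simultaneously for every ordered pair---is exactly what the conjecture demands, and is why even the existence of $f(k)$ for $k\ge 2$ is, to my knowledge, not known.
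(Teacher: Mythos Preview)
The statement you were asked to address is a \emph{conjecture}, not a theorem: the paper does not prove it, and your write-up correctly recognises this and discusses obstructions rather than offering a proof. In that sense there is nothing to compare against the paper's own argument, because there is no such argument.

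That said, your closing claim contains a factual error. You write that ``even the existence of $f(k)$ for $k\ge 2$ is, to my knowledge, not known''. In fact the paper explicitly records that the case $k=2$ is settled: Thomassen's theorem (Theorem~\ref{Thomassen} in the paper) shows that every $4$-connected graph has a strongly $2$-connected orientation, so $f(2)$ exists and is at most $4$. The conjecture remains open only for $k\ge 3$; the paper also notes that deciding whether a given graph has a strongly $k$-connected orientation is NP-complete for each $k\ge 3$. You should update your discussion accordingly.
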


More recently, Thomassen~\cite{Thomassen2014} established the following necessary and sufficient conditions that guarantee that a graph admits a strongly $2$-connected orientation.

\begin{theorem}{\em \cite{Thomassen2014}}
A graph $G$ has a strongly $2$-connected orientation if and only if $G$ is $4$-edge-connected, and every vertex-deleted subgraph of $G$ is $2$-edge-connected. \label{Thomassen}
\end{theorem}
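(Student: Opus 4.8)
First I would dispose of necessity, which is routine. Recall that a digraph is \emph{strongly $2$-connected} if it has at least three vertices, is strongly connected, and remains strongly connected after the deletion of any single vertex. Suppose $D$ is a strongly $2$-connected orientation of $G$. By Menger's theorem for digraphs, strong $2$-connectivity provides two arc-disjoint directed paths from $u$ to $v$ for every ordered pair $(u,v)$ of vertices; hence every proper nonempty set $S \subseteq V(G)$ has at least two arcs leaving it and at least two arcs entering it. Re-interpreting these arcs as edges of $G$, every edge cut of $G$ contains at least four edges, so $G$ is $4$-edge-connected. For a fixed vertex $w$, the digraph $D-w$ is strongly connected, so in $G-w$ every edge cut is crossed in each direction and therefore has at least two edges; thus $G-w$ is $2$-edge-connected. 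As this holds for every $w$, both necessary conditions follow.

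The content of the theorem lies in the converse, which is the main obstacle. Note that $4$-edge-connectivity alone already gives something: by the Nash-Williams (equivalently Mader) orientation theorem quoted above, $G$ has a strongly $2$-\emph{edge}-connected orientation. The whole task is thus to promote edge-connectivity to vertex-connectivity, and the hypothesis that every $G-v$ is $2$-edge-connected is precisely the extra local information one must exploit to do so. The plan is to argue by induction on the number of edges of $G$, using a splitting-off reduction: at a carefully chosen vertex $v$ (for instance one of minimum degree, which is at least $4$ since $G$ is $4$-edge-connected) I would split off an admissible pair of edges $uv,vw$, replacing them by a single edge $uw$, in the spirit of Mader's lifting theorem. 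Mader's theorem guarantees a split that preserves all pairwise edge-connectivities among the remaining vertices, so $4$-edge-connectivity is retained; the delicate point is to choose the split so that the condition ``every vertex-deleted subgraph is $2$-edge-connected'' also survives, which amounts to avoiding splits that create a new small edge cut in some $G'-x$.

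Having reduced to a smaller graph $G'$ satisfying both hypotheses, induction supplies a strongly $2$-connected orientation $D'$ of $G'$. To finish, I would lift $D'$ back to an orientation of $G$ by undoing the split: the arc corresponding to $uw$ in $D'$ is replaced by a directed path $u \to v \to w$ (or $w \to v \to u$) through the reinserted vertex $v$, and the remaining edges incident with $v$ are oriented to restore strong connectivity of $D-v$ and of each $D-x$ affected by the change. The hard part of the argument is this combination of requirements in the reduction step---simultaneously preserving global $4$-edge-connectivity and the whole family of local $2$-edge-connectivity conditions---together with verifying that the reinserted vertex does not become a directed cut vertex. Small base cases (graphs of order close to $5$, such as $K_5$) would be checked directly, and I would expect to need a separate argument, or an alternative admissible reduction, for the low-degree or low-order configurations in which no admissible split of the required kind exists.
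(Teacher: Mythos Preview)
The paper does not prove this theorem at all: it is quoted verbatim from Thomassen~\cite{Thomassen2014} and used only as a black box (to conclude that every $4$-connected graph has a strongly $2$-connected orientation). So there is no ``paper's own proof'' to compare against.

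As for the proposal itself: your necessity argument is correct and standard. Your sufficiency argument, however, is not a proof but a plan, and the plan defers exactly the points that carry all the difficulty. Two concrete issues. First, a single Mader split at $v$ lowers $\deg(v)$ by two, so after the split $G'$ need not be $4$-edge-connected (if $\deg_G(v)\in\{4,5\}$, the vertex itself becomes a small cut); you would have to split $v$ off completely, which already requires $\deg(v)$ even and some care, and then you must argue that a \emph{complete} admissible splitting can be chosen so that every $G'-x$ stays $2$-edge-connected. You acknowledge this as ``the delicate point'' but give no mechanism for achieving it; Mader's theorem only controls pairwise edge-connectivities in $G'$, not in the vertex-deleted subgraphs $G'-x$. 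Second, in the lifting step you assert that ``the remaining edges incident with $v$ are oriented to restore strong connectivity of $D-v$ and of each $D-x$,'' but you have not shown that such an orientation of the reinstated edges exists; in particular, after undoing several splits simultaneously, the constraints at $v$ and at its neighbours interact, and it is not clear that they are jointly satisfiable. These two steps are precisely where the content of Thomassen's theorem lies, and the proposal does not supply them.
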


It follows that if $G$ is $4$-connected, then $G$ has a strongly $2$-connected orientation, confirming Conjecture~\ref{thomassenconj} in the case $k=2$.  Conjecture~\ref{thomassenconj} remains open for $k\ge 3$.  Durand de Gevigney~\cite{DurandDegevigney2012} showed that for each $k \ge 3$, the problem of deciding whether a given graph has a strongly $k$-connected orientation is NP-complete.

\bigskip

Instead of trying to find the largest $k$ for which a given graph has a strongly $k$-connected orientation, we focus on finding orientations for which the {\em average connectivity}, that is, the average of the connectivities between all ordered pairs of vertices, is maximized.

For distinct vertices $u$ and $v$ in a graph $G$, the {\em connectivity} between $u$ and $v$, denoted $\kappa_G(u,v)$, or $\kappa(u,v)$ if $G$ is clear from context, is the maximum number of internally disjoint $u$--$v$ paths in $G$. If $u$ and $v$ are not adjacent in $G$, then Menger's Theorem states that $\kappa_G(u,v)$ is also equal to the minimum number of vertices whose removal separates $u$ and $v$ in $G$.  It is also well-known that the connectivity $\kappa(G)$ of a graph $G$ is the minimum of $\kappa_G(u,v)$ taken over all unordered pairs of distinct vertices $u,v$ of $G$.  See~\cite{OellermannChapter2013} for more details.

The {\em average connectivity} of $G$, denoted $\kb(G),$ is the average of $\kappa_G(u,v)$ taken over all unordered pairs of distinct vertices $u,v$ of $G$.   If $G$ has order $n$, then
\[\kb(G)=\frac{1}{\binom{n}{2}}{\sum_{\{u,v\} \subseteq V(G)} \kappa_G(u,v)}.\]
This parameter was introduced by Beineke et al.~\cite{BeinekeOellermannPippert2002} as a more refined measure of the connectedness of  a graph than the connectivity.  Bounds on the average connectivity of a graph in terms of various graph parameters were given by Dankelmann and Oellermann~\cite{DankelmannOellermann2003}.

For a directed graph $D$ with distinct vertices $u$ and $v$, the {\em connectivity} from $u$ to $v$, denoted $\kappa_D(u,v)$, is the maximum number of internally disjoint directed $u$--$v$ paths in $D$.  The {\em average connectivity} of $D$, denoted $\kb(D)$, is defined as the average of $\kappa_D(u,v)$ taken over all ordered pairs of distinct vertices $u,v$ of $D$.  If $D$ has order $n$, then
\[
\kb(D)=\frac{1}{n(n-1)}\sum_{\substack{(u,v)\\u,v\in V(D), u\neq v}}\kappa_D(u,v).
\]
The average connectivity of digraphs was first introduced by Henning and Oellermann~\cite{HenningOellermann2004}.

In this article, we are concerned primarily with the maximum average connectivity among all orientations of a given graph $G$, denoted $\kbm(G)$.  This parameter was also introduced by Henning and Oellermann~\cite{HenningOellermann2004}.  If $D$ is an orientation of $G$ such that $\kbm(G) =\kb(D)$, then we say that $D$ is an {\em optimal orientation} of $G$.  Note that optimal orientations need not be unique; a graph may have many different optimal orientations.  Henning and Oellermann gave the following asymptotically sharp bound on $\kbm(T)$ for any tree $T$.\begin{footnote}{The family of trees described in~\cite{HenningOellermann2004} for which the lower bound is asymptotically sharp can be obtained as follows.  For a given $t \ge 1$, take three copies of $K_{1,t}$, and identify a leaf from each copy of $K_{1,t}$ in a single vertex (which will have degree $3$).  Let $T_{3t+1}$ be such a tree.  So if $n=3t+1$, then $\kbm(T_{3t+1}) = \frac{2n^2+14n-43}{9n(n-1)}$.  We point out that it was incorrectly stated in~\cite{HenningOellermann2004} that for a tree $T$ of order $n \ge 3$,  we have $\kbm(T) \ge \frac{2n^2+14n-43}{9n(n-1)}$. This inequality holds for $n \ge 34$.   However, for $n<34$, we have
\begin{align*}
\kbm(K_{1,n-1})&=\frac{\lfloor\frac{n-1}{2}\rfloor\lceil\frac{n-1}{2}\rceil+(n-1)}{n(n-1)}< \frac{2n^2+14n-43}{9n(n-1)}.
\end{align*}
For $n<  34$, the stars are in fact the extremal trees.  That is, for $n<34$, one can show that $\kbm(K_{1,n-1}) \le \kbm(T)$ for every tree $T$ of order $n$.}
\end{footnote}

\begin{theorem} {\em \cite{HenningOellermann2004}} \label{henningoellermann}
If $T$ is a tree of order $n \ge 3$, then
\[\tfrac{2}{9} < \kbm(T) \le \tfrac{1}{2}.\]
\end{theorem}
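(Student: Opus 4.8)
The plan is to prove the two bounds separately, since they are of quite different natures.

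For the upper bound $\kbm(T) \le \tfrac{1}{2}$, the key observation is that in any orientation $D$ of a tree $T$, there is at most one directed path between any ordered pair of vertices. This is because the underlying graph is a tree, so there is a unique $u$--$v$ path in $T$, and thus at most one directed $u$--$v$ path in $D$ (the orientation either agrees with this path's direction or it does not). Hence $\kappa_D(u,v) \in \{0,1\}$ for every ordered pair. Therefore $\kb(D) = \frac{1}{n(n-1)} \sum_{(u,v)} \kappa_D(u,v)$ counts the fraction of ordered pairs $(u,v)$ for which a directed $u$--$v$ path exists. For a fixed unordered pair $\{u,v\}$, the unique tree path receives a single orientation, so at most one of the two ordered pairs $(u,v)$, $(v,u)$ can have a directed path. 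Thus the numerator is at most $\binom{n}{2}$, giving $\kb(D) \le \frac{\binom{n}{2}}{n(n-1)} = \tfrac{1}{2}$. This bound holds for every orientation, hence for $\kbm(T)$.

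For the lower bound $\kbm(T) > \tfrac{2}{9}$, the strategy is to exhibit an orientation $D$ of $T$ whose average connectivity exceeds $\tfrac{2}{9}$, and since $\kbm(T)$ is the maximum over all orientations, this suffices. The natural approach is to root $T$ at a well-chosen vertex (for instance a centroid, which splits the tree so that no subtree hanging off the root is too large) and then orient the edges to create many reachable pairs. Concretely, I would try to orient the tree so that a large constant fraction of ordered pairs $(u,v)$ admit a directed path. One clean construction is to choose a vertex and partition the neighbours' subtrees, then orient some subtrees ``toward the root'' and the remaining subtrees ``away from the root''; a vertex in an in-oriented subtree can then reach, via the root, every vertex in every out-oriented subtree. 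Counting these cross pairs and optimizing the split between in- and out-subtrees should yield a fraction bounded below by roughly $\tfrac{2}{9}$ in the worst case.

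The main obstacle will be establishing the lower bound with the correct constant $\tfrac{2}{9}$ uniformly over all trees, rather than just for favourable examples. The difficulty is that the worst case is governed by the tree's structure at a central vertex: if the tree is balanced into three equal parts at a degree-$3$ vertex (as in the extremal family $T_{3t+1}$ described in the footnote), the number of reachable cross pairs is essentially minimized relative to $n^2$. I would therefore reduce to analyzing an orientation around a vertex whose removal leaves components each of size at most $n/2$, show that one can always orient to guarantee at least $(\tfrac{2}{9}+o(1))n(n-1)$ reachable ordered pairs, and handle the boundary behaviour carefully to get a strict inequality for all $n \ge 3$. Verifying that no tree forces the fraction below $\tfrac{2}{9}$ — effectively a tight extremal analysis matching the asymptotic family — is the crux of the argument, while the upper bound and the construction of a good orientation are comparatively routine.
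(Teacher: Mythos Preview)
The paper does not prove this theorem; it is quoted with a citation to Henning and Oellermann~\cite{HenningOellermann2004} and no argument is given here. There is therefore no ``paper's proof'' to compare your attempt against.

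That said, your approach is sound and in fact matches the standard one. The upper bound argument is complete and correct: in a tree the unique $u$--$v$ path forces $\theta_D(u,v)\le 1$ for every orientation $D$ and every unordered pair, giving $\kb(D)\le\tfrac12$. Your lower bound sketch --- root at a centroid, orient some branches toward the root and the rest away, and count the resulting cross pairs --- is exactly the idea used in~\cite{HenningOellermann2004}, and you have correctly identified the extremal configuration (three equal branches at a degree-$3$ centroid, i.e.\ the family $T_{3t+1}$ from the footnote). What remains to turn the sketch into a proof is the partition step: at a centroid every component of $T-v$ has at most $\lfloor n/2\rfloor$ vertices, and one must show that these component sizes can always be split into an ``in'' group of total size $a$ and an ``out'' group of total size $b=n-1-a$ with $ab$ at least roughly $\tfrac{2}{9}(n-1)^2$; the worst split occurs precisely when there are three components of size about $(n-1)/3$. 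Together with the $n-1$ additional reachable pairs involving the root, and a careful treatment of small $n$, this yields the strict inequality $\kbm(T)>\tfrac{2}{9}$.
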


We obtain bounds on $\kbm(G)$ for all graphs $G$ of a given order and belonging to a given class of graphs.  Whenever possible, we demonstrate that these bounds are (asymptotically) sharp.  We study these problems for two classes of graphs that are, in some sense, generalizations or extensions of trees: minimally $2$-connected graphs (trees are minimally $1$-connected), and maximal outerplanar graphs (which are known to be $2$-trees).

We are also interested in the value of the ratio $\kbm(G)/\kb(G)$ for a given graph $G$, which is in some sense a measure of how well the overall level of connectedness of a graph can be preserved under orientation.  Naively, one might expect the ratio $\kbm(G)/\kb(G)$ to be close to $1/2$, since one might hope for an orientation of $G$ in which a collection of $\kappa_G(u,v)$ internally disjoint paths are all directed one way or the other, for every pair of distinct vertices $u,v$.  But the directed $u$--$v$ paths need not be internally disjoint from the directed $v$--$u$ paths, meaning that $\kbm(G)/\kb(G)$ can be much larger than $1/2$; we show that it can be arbitrarily close to $1$.  It is also true that $\kbm(G)/\kb(G)$ can be much smaller than $1/2$.  For example, we have already mentioned that there are trees $T$ such that $\kbm(T)$ is arbitrarily close to $2/9$.  Since $\kb(T)=1$ for every tree $T$, it follows immediately that $\kbm(T)/\kb(T)$ can be arbitrarily close to $2/9$.


We now describe our main contributions, and the layout of the article. In Section~\ref{Preliminaries}, we present some terminology, and some straightforward bounds on $\kbm(G)$ and $\kbm(G)/\kb(G)$ for every graph $G$.  We briefly consider the edge connectivity analogue of $\kbm(G)/\kb(G)$ in order to highlight a stark contrast between connectivity and edge connectivity in this setting.

In Section~\ref{OddRegular}, we show that if $G$ is an $r$-regular graph of order $n$ for odd $r$, then
\[
\KBM(G) \le \tfrac{r-1}{2} + \tfrac{n}{4(n-1)},
\]
and that this bound is sharp.  We then focus on cubic $3$-connected graphs.  If $G$ is a cubic $3$-connected graph, then certainly $\kbm(G)\geq 1$ by Robbins' Theorem.  We demonstrate that this lower bound is asymptotically tight by describing a sequence of cubic $3$-connected graphs for which the average connectivity approaches $1$. This shows that one cannot guarantee significantly more `connectedness' in an optimal orientation of a $3$-connected graph than in an optimal orientation of a $2$-edge-connected graph.

In Section~\ref{Minimally2Connected}, we show that if $G$ is a minimally $2$-connected graph of order $n$, then
\[
1\leq \kbm(G) <\tfrac{5}{4}.
\]
While the lower bound is sharp, we suspect that the upper bound can be improved.  We also show that for every minimally $2$-connected graph $G$,
\[
\tfrac{4}{9}< \tfrac{\kbm(G)}{\kb(G)}< \tfrac{5}{8}.
\]
Although we are unable to show that these bounds are sharp, we do find sequences of minimally $2$-connected graphs for which the ratio $\kbm(G)/\kb(G)$ approaches $\frac{25}{54}$ and $\frac{9}{16}$, respectively.  One of these constructions uses the sequence of cubic $3$-connected graphs described in Section~\ref{OddRegular}.

Finally, in Section~\ref{MOPs}, we show that if $G$ is a maximal outerplanar graph, then
\[
\kbm(G) \leq \tfrac{3}{2} + \tfrac{n-5}{n^2-n},
\]
and that this bound is asymptotically sharp.  We conjecture that if $G$ is a maximal outerplanar graph of order at least $4$, then $\kbm(G)\geq 19/18$.  We give an example which demonstrates that if this conjecture is true, then the bound is asymptotically sharp.


\section{General bounds}\label{Preliminaries}

We begin with some notation and terminology.  The {\em total connectivity} of a  graph $G$ is the sum of the connectivities of all unordered pairs of distinct vertices of $G$, and is denoted by $K(G)$.  Evidently, if $G$ has order $n$, then $K(G)=\binom{n}{2}\kb(G).$ If $u$ and $v$ are distinct vertices of $G$, then $\kappa(u,v) \le \min\{\deg(u) + \deg(v)\}$. Thus, if $d_1, d_2, \ldots, d_n$ is the degree sequence of $G$, then $K(G) \le \sum_{1 \le i < j \le n}\min\{d_i,d_j\}$. In this case, we call
\[
P(G)=P(d_1,d_2, \ldots, d_n) =\sum_{1 \le i < j \le n}\min\{d_i,d_j\}
\]
the {\em potential} of $G$.

If $D$ is a digraph, and $u,v \in V(D)$ are distinct, then we let $\theta_D(u,v) = \kappa_D(u,v) + \kappa_D(v,u)$. If $D$ is clear from context, then the subscript will be omitted. We also refer to $\theta(u,v)$ as the $\theta$ {\em value} for $u$ and $v$. For every pair $u,v$ of distinct vertices of $D$, we have $\theta(u,v) = \kappa(u,v) + \kappa(v,u) \le \min\{\od(u),\id(v)\} + \min\{\od(v), \id(u)\}$. We call
\[
P(D)=\sum_{\{u,v\} \subseteq V(D)} \min\{\od(u),\id(v)\} + \min\{\od(v), \id(u)\}
\]
the {\em potential} of $D$.

The {\em total connectivity} of a digraph $D$, denoted $K(D)$, is the sum of the connectivities of all ordered pairs of distinct vertices of $D$, or equivalently, the sum of the $\theta$ values of all unordered pairs of distinct vertices of $D$.  If $D$ has order $n$, then $K(D)=n(n-1)\kb(D).$

For a graph $G$, the notation $K_{\max}(G)$ denotes the maximum total connectivity among all orientations of $G$. The potential of $G$ provides the following useful upper bound on the total connectivity of any orientation of $G$, and hence on $K_{\max}(G)$.

\begin{observation}\label{degree_upperbd}
If $D$ is an orientation of a graph $G$, then
\[
K(D)=\sum_{\{u,v\} \subseteq V(D)} \theta(u,v) \le P(D) \le P(G).
\]
\end{observation}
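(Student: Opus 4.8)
The plan is to verify the displayed chain from left to right, splitting it into the equality $K(D) = \sum_{\{u,v\}} \theta(u,v)$ and the two inequalities, and reducing each inequality to a statement about a single unordered pair $\{u,v\}$. The equality is immediate from the definitions: $K(D)$ is the sum of $\kappa_D(u,v)$ over all ordered pairs of distinct vertices, and the two ordered pairs arising from a fixed unordered pair $\{u,v\}$ contribute exactly $\kappa_D(u,v) + \kappa_D(v,u) = \theta(u,v)$.

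For the first inequality, I would argue termwise using the bound $\kappa_D(u,v) \le \min\{\od(u),\id(v)\}$ recorded just before the statement: every directed $u$--$v$ path leaves $u$ along an out-arc and enters $v$ along an in-arc, and internally disjoint paths use distinct out-arcs at $u$ and distinct in-arcs at $v$, so there are at most $\od(u)$ and at most $\id(v)$ of them. Applying this to both $\kappa_D(u,v)$ and $\kappa_D(v,u)$ gives $\theta(u,v) \le \min\{\od(u),\id(v)\} + \min\{\od(v),\id(u)\}$, and summing over all unordered pairs yields $\sum_{\{u,v\}} \theta(u,v) \le P(D)$.

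The only part that calls for genuine work is $P(D) \le P(G)$, which I would again reduce to the termwise inequality
\[
\min\{\od(u),\id(v)\} + \min\{\od(v),\id(u)\} \le \min\{\deg_G(u),\deg_G(v)\},
\]
whose sum over all unordered pairs is exactly $P(D) \le P(G)$, since $\deg_G(u) = \od(u) + \id(u)$ for every vertex (as $D$ is an orientation of $G$). To prove this termwise inequality I would bound the two minima on the left in two complementary ways: discarding one argument from each, $\min\{\od(u),\id(v)\} \le \od(u)$ and $\min\{\od(v),\id(u)\} \le \id(u)$, yields the bound $\od(u)+\id(u) = \deg_G(u)$, while discarding the other argument from each yields $\id(v)+\od(v) = \deg_G(v)$. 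Being at most both $\deg_G(u)$ and $\deg_G(v)$, the left side is at most their minimum.

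The statement is an observation and nothing is difficult, but the one place to take care is this final termwise bound: the temptation is to split into cases according to which of $\deg_G(u)$ and $\deg_G(v)$ is smaller, whereas the cleaner argument is to notice that the two natural ways of discarding a minimum recover precisely the two degrees, so no case analysis is needed.
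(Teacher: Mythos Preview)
Your proof is correct. The paper states this as an observation without proof, having already recorded the termwise bound $\theta(u,v)\le\min\{\od(u),\id(v)\}+\min\{\od(v),\id(u)\}$ in the preceding paragraph; your argument spells out exactly this and then supplies the clean verification of $P(D)\le P(G)$ that the paper leaves implicit, so there is nothing to compare.
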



Let $D$ be an orientation of $G$.  We call a pair of vertices $u$ and $v$ of $D$ {\em full} if
$\KAPPASUM_D(u,v)= \min \{ \deg_G(u), \deg_G(v) \}$.  We say that $D$ is {\em saturated}
if every pair of distinct vertices is full, i.e., if $K(D)=P(G).$

\smallskip

We now present some preliminary results.  We begin with some straightforward bounds on $\kbm(G)$.  From Robbins' theorem~\cite{Robbins1939}, we know that if $G$ is a $2$-edge-connected graph, then $G$ has a strong orientation.  This gives the following.

\begin{theorem}\label{Robbins}
If $G$ is a $2$-edge-connected graph, then $\kbm(G)\geq 1.$
\end{theorem}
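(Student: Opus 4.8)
The plan is to invoke Robbins' Theorem directly and then observe that the average connectivity of any strong orientation is at least $1$. The key point is the definition of the relevant parameters: recall that $\kbm(G)$ is the maximum of $\kb(D)$ over all orientations $D$ of $G$, so to establish the lower bound $\kbm(G) \ge 1$ it suffices to exhibit a single orientation $D$ of $G$ with $\kb(D) \ge 1$. The natural candidate is a strong orientation, whose existence is exactly what Robbins' Theorem guarantees for a $2$-edge-connected graph.

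First I would apply Robbins' Theorem to obtain a strong orientation $D$ of $G$. By the definition of strong orientation, for every ordered pair $(u,v)$ of distinct vertices there is a directed $u$--$v$ path in $D$, so $\kappa_D(u,v) \ge 1$ for every such pair. The main step is then a straightforward averaging argument: since
\[
\kb(D) = \frac{1}{n(n-1)} \sum_{\substack{(u,v)\\ u,v \in V(D),\, u \neq v}} \kappa_D(u,v)
\]
and each of the $n(n-1)$ summands is at least $1$, we get $\kb(D) \ge 1$. Since $\kbm(G) = \max_D \kb(D) \ge \kb(D)$ for this particular strong orientation, the bound $\kbm(G) \ge 1$ follows immediately.

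Honestly, there is no real obstacle here, since the theorem is an almost immediate consequence of the cited result of Robbins; the only thing to be careful about is unwinding the definitions so that the existence of the required directed paths is correctly translated into the inequality $\kappa_D(u,v) \ge 1$ for every ordered pair. It is worth noting explicitly that a $2$-edge-connected graph has at least two vertices (indeed at least three, since a graph on two vertices with a single edge is not $2$-edge-connected under the loopless, simple convention adopted here), so the denominator $n(n-1)$ is positive and the average is well defined.
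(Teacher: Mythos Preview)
Your proposal is correct and follows exactly the approach indicated in the paper: the paper simply notes that the result is an immediate consequence of Robbins' Theorem, and you have spelled out the (trivial) details of why a strong orientation has average connectivity at least $1$.
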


\noindent
We remark, however, that we do not know whether every optimal orientation of a $2$-edge-connected graph must be strong.

If $D$ is an orientation of a graph of order $n$, then $\theta(u,v) \le n-1$ for all pairs $u,v$ of vertices of $D$. This gives the following bound, first noted by Henning and Oellermann.

\begin{theorem}{\em \cite{HenningOellermann2004}}
If $G$ is a graph of order $n$, then $\kbm(G)\leq \frac{n-1}{2}.$
\end{theorem}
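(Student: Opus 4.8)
The plan is to bound $\kbm(G)$ by controlling each $\theta$ value. Recall that for any orientation $D$ of $G$ and any ordered pair of distinct vertices $u,v$, the connectivity $\kappa_D(u,v)$ counts internally disjoint directed $u$--$v$ paths, each of which must pass through a distinct vertex (other than $u,v$) or use the arc $uv$ directly. Since $D$ has order $n$, there are only $n-2$ other vertices available to serve as internal vertices, so at most $n-2$ internally disjoint paths can use an internal vertex, plus at most one more path consisting of the single arc from $u$ to $v$. Hence $\kappa_D(u,v) \le n-1$, and the same bound holds for $\kappa_D(v,u)$.

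The key observation is that we should bound $\theta(u,v) = \kappa_D(u,v) + \kappa_D(v,u)$ rather than the two terms separately, since the claimed bound on $\theta$ is $n-1$, not $2(n-1)$. First I would argue that the collection of internally disjoint $u$--$v$ paths together with the internally disjoint $v$--$u$ paths can be chosen so that their internal vertices are distinct. This is because $D$ is an orientation of a simple graph, so between $u$ and $v$ there is at most one arc (in one direction only); thus at most one of the two families can contain the direct arc. The remaining $\kappa_D(u,v) + \kappa_D(v,u) - 1$ paths (if one uses the direct arc) or all $\theta(u,v)$ paths (if none does) each use at least one of the $n-2$ internal vertices, and these internal vertices can be taken pairwise distinct across both families. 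This yields $\theta(u,v) \le (n-2) + 1 = n-1$.

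With $\theta(u,v) \le n-1$ established for every unordered pair, I would sum over all $\binom{n}{2}$ pairs and invoke the total-connectivity identity $K(D) = n(n-1)\kb(D)$ stated in the excerpt. Summing gives
\[
K(D) = \sum_{\{u,v\} \subseteq V(D)} \theta(u,v) \le \binom{n}{2}(n-1) = \frac{n(n-1)^2}{2}.
\]
Dividing by $n(n-1)$ then gives $\kb(D) \le \frac{n-1}{2}$ for every orientation $D$, and taking the maximum over all orientations yields $\kbm(G) \le \frac{n-1}{2}$.

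The main obstacle is justifying that the internal vertices of the two path families can be made disjoint, so that $\theta(u,v) \le n-1$ rather than merely $\theta(u,v) \le 2(n-1)$. One must be careful: a vertex $w$ could a priori serve as an internal vertex of both a $u$--$v$ path and a $v$--$u$ path. The cleanest route is probably the direct pigeonhole argument above, using that each of the at most $\theta(u,v)$ paths that is not the single direct arc consumes at least one distinct internal vertex from the pool of $n-2$ available vertices. An alternative, cleaner to state if one prefers, is to note that $\min\{\od(u),\id(v)\} + \min\{\od(v),\id(u)\} \le \od(u)+\id(u) = \deg_G(u) \le n-1$, which bounds $\theta(u,v)$ via the potential inequality $\theta(u,v) \le \min\{\od(u),\id(v)\} + \min\{\od(v),\id(u)\}$ already noted in the excerpt; this reduces the whole argument to the elementary fact that a vertex in an $n$-vertex graph has degree at most $n-1$.
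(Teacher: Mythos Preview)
Your alternative argument via the potential inequality is correct and is exactly the paper's one-line reasoning: from $\theta(u,v)\le \min\{\od(u),\id(v)\}+\min\{\od(v),\id(u)\}\le \od(u)+\id(u)=\deg_G(u)\le n-1$, the bound $\kbm(G)\le (n-1)/2$ follows immediately by summing and dividing. So the proposal, taken as a whole, succeeds and by the same route as the paper.

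However, your primary ``direct pigeonhole'' argument has a genuine gap that you identify but do not close. You assert that the internal vertices of a maximum family of $u\to v$ paths and a maximum family of $v\to u$ paths ``can be taken pairwise distinct across both families,'' and then you invoke this as the pigeonhole step. But nothing in the setup prevents a vertex $w$ from lying on one path in each family simultaneously: the arcs entering and leaving $w$ on a $u\to v$ path can be entirely different from those on a $v\to u$ path, so internal disjointness within each family does not propagate across families. You flag this as ``the main obstacle,'' then immediately say ``the cleanest route is probably the direct pigeonhole argument above'' --- but that is the very argument whose key step is unproven. The degree-based alternative is not just cleaner to state; it is the argument that actually works, precisely because it counts arcs at $u$ (which genuinely cannot be reused between the two families, since out-arcs and in-arcs at $u$ are disjoint in an orientation) rather than internal vertices (which can be reused).
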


\noindent
This bound is achieved, for example, if $n$ is odd and $G \cong K_n$~\cite{HenningOellermann2004}.

We now turn to bounds on the ratio $\kbm(G)/\kb(G)$.

\begin{theorem}\label{GeneralRatioBound}
For every graph $G$, we have $\kbm(G)/\kb(G)\leq 1$, and this bound is asymptotically sharp.
\end{theorem}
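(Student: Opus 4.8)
The plan is to treat the two claims separately: the inequality $\kbm(G)/\kb(G)\le 1$ by a pairwise comparison, and the asymptotic sharpness by exhibiting an explicit family. For the upper bound, let $D$ be an optimal orientation of $G$, so $\kbm(G)=\kb(D)$, and let $n$ be the order of $G$. The key observation is that every directed $u$--$v$ path in $D$ becomes, after forgetting orientations, an undirected $u$--$v$ path in $G$, and internally disjoint directed paths stay internally disjoint. Hence $\kappa_D(u,v)\le\kappa_G(u,v)$ and $\kappa_D(v,u)\le\kappa_G(u,v)$ for every pair, giving the pointwise bound $\theta_D(u,v)\le 2\kappa_G(u,v)$. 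Summing over all $\binom{n}{2}$ unordered pairs and recalling that $\kb(D)=\frac{1}{n(n-1)}\sum_{\{u,v\}}\theta_D(u,v)$, that $\kb(G)=\frac{1}{\binom{n}{2}}\sum_{\{u,v\}}\kappa_G(u,v)$, and that $n(n-1)=2\binom{n}{2}$, I immediately obtain $\kb(D)\le\kb(G)$, i.e.\ $\kbm(G)/\kb(G)\le 1$.

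For sharpness, the design goal is a family in which almost all pairs satisfy $\theta_D(u,v)=2\kappa_G(u,v)$ for a suitable orientation $D$. Such pairs must have low connectivity, since the degree constraint gives $\theta_D(u,v)\le\min\{\od(u),\id(v)\}+\min\{\od(v),\id(u)\}\le\min\{\deg_G(u),\deg_G(v)\}$, so $\theta_D(u,v)\le\kappa_G(u,v)$ whenever $\kappa_G(u,v)$ already equals the minimum degree; yet these doublable pairs must also dominate $\sum\kappa_G(u,v)$. A clean candidate is the friendship graph $F_p$ (that is, $p$ triangles sharing a single vertex $s$), oriented so that each triangle becomes a directed $3$-cycle through $s$. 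For two outer vertices lying in distinct triangles, $s$ is a cut vertex, so $\kappa_G=1$, while the directed cycles provide a directed path in each direction (a short one and a long one) through $s$, yielding $\theta_D=2$. I would then tabulate the remaining pair-types—two outer vertices of a common triangle, and an outer vertex together with $s$—each contributing $\kappa_G=2$ and $\theta_D=2$, and evaluate the two sums explicitly. This gives a ratio of the form $\frac{4p+2}{4p+8}$, which tends to $1$ as $p\to\infty$; since $\kbm(F_p)\ge\kb(D)$, asymptotic sharpness follows.

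The routine parts are the pointwise inequalities and the bookkeeping for the two sums. The main conceptual obstacle is the sharpness construction: one must find a graph in which the doublable (low-connectivity) pairs both outnumber and outweigh the inevitable pairs for which $\theta_D$ is capped at $\kappa_G$, and then exhibit a single global orientation that simultaneously realizes $\theta_D(u,v)=2\kappa_G(u,v)$ on all of the dominant pairs. The friendship graph resolves this because its cut vertex forces $\kappa_G=1$ on the quadratically many cross-triangle pairs, while the directed triangles let each such pair be routed both ways through $s$; the crux is then checking that these cross pairs dominate both sums.
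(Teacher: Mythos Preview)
Your proof is correct. The upper bound argument is identical to the paper's: both reduce to the pointwise inequality $\theta_D(u,v)\le 2\kappa_G(u,v)$ and sum.

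For asymptotic sharpness you take a genuinely different route. The paper uses the lexicographic product $F_{2n}=P_n\circ(2K_1)$, a $2$-connected graph whose average connectivity tends to $2$, and exhibits an orientation in which all pairs of degree-$4$ vertices have $\theta=4$, so $\kbm(F_{2n})/\kb(F_{2n})\to 1$. Your friendship-graph construction instead forces $\kappa_G=1$ on the quadratically many cross-triangle pairs via the cut vertex $s$, and the cyclic triangle orientations realise $\theta=2$ on every pair; your explicit ratio $(4p+2)/(4p+8)\to 1$ is correct. Your example is arguably simpler to verify, since every single pair already has $\theta_D=2$ and the work reduces to one short table. What the paper's example buys is a bit more: it shows the bound is asymptotically sharp even within the class of $2$-connected graphs (indeed with $\kb\to 2$ rather than $\kb\to 1$), whereas your construction leans essentially on the presence of a cut vertex. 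For the theorem as stated, however, your construction suffices.
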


\begin{proof}
If $D$ is an orientation of $G$, then $\theta_D(u,v) \le 2\kappa_G(u,v)$ for all pairs $u,v$ of vertices of $G$.  So $\kbm(G)/\kb(G) \le 1$.

To see that this bound is asymptotically sharp, let $F_{2n}$ be the lexicographic product $P_n \circ (2K_1)$, i.e., the graph obtained from two disjoint paths $P:v_1v_2 \ldots v_n$ and $Q:u_1u_2 \ldots u_n$ by adding the edges  $v_iu_{i+1}$ and $u_iv_{i+1}$ for $1 \le i < n$ (see Figure~\ref{large_ratio_family}). The graph $F_{2n}$ has $n-2$ pairs $u,v$ such that ${\kappa}(u,v)=4$, namely those pairs $u_i,v_i$ for $2 \le i \le n-1$, and $4(n-3)$ pairs $u,v$ such that ${\kappa}(u,v)=3$, namely those pairs of adjacent vertices of degree four.  For all remaining pairs $u,v$ of $F_{2n}$, we see that $\kappa_{F_{2n}}(u,v)=2$. Thus  $\lim_{n \rightarrow \infty}\kb(F_{2n})=2$.

\begin{figure}[htb]
\centering{
\begin{tikzpicture}
\tikzstyle{every path}=[color =black, line width = 0.3 pt, > = triangle 45]
\pgfmathsetmacro{\n}{4}
\pgfmathtruncatemacro{\m}{\n-1}
\foreach \x in {1,...,\n}
{
\hollowvertex (\x) at (\x,0) {};
\node[below] at (\x,0) {$v_{\x}$};
}
\foreach \x in {1,...,\n}
{
\pgfmathtruncatemacro{\a}{\n+\x}
\hollowvertex (\a) at (\x,1) {};
\node[above] at (\x,1) {$u_{\x}$};
}
\foreach \x in {1,...,\m}
{
\pgfmathtruncatemacro{\a}{\n+\x}
\pgfmathtruncatemacro{\b}{\n+\x+1}
\pgfmathtruncatemacro{\c}{1+\x}
\path
(\x) edge[->-=0.9] (\b)
(\a) edge[->-=0.9] (\c)
(\b) edge[->-=0.9] (\a)
(\c) edge[->-=0.9] (\x);}
\hollowvertex (a) at (5.5,1) {};
\node[above] at (5.5,1) {$u_{n-1}$};
\hollowvertex (b) at (5.5,0) {};
\node[below] at (5.5,0) {$v_{n-1}$};
\hollowvertex (c) at (6.5,1) {};
\node[above] at (6.5,1) {$u_{n}$};
\hollowvertex (d) at (6.5,0) {};
\node[below] at (6.5,0) {$v_{n}$};
\path
(b) edge[->-=0.9] (c)
(a) edge[->-=0.9] (d)
(c) edge[->-=0.9] (a)
(d) edge[->-=0.9] (b);
\node at (4.75,0) {$\cdots$};
\node at (4.75,1) {$\cdots$};

\end{tikzpicture}}
\caption{The orientation $D_{2n}$ of the graph $F_{2n}$} \label{large_ratio_family}
\end{figure}
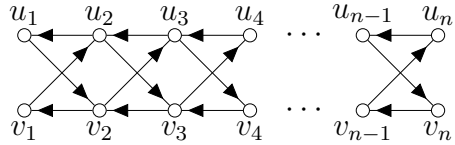

We now describe an orientation $D_{2n}$ of $F_{2n}$ with the property that $\lim_{n \rightarrow \infty} \tfrac{\kb(D_{2n})}{\kb(F_{2n})}=1$ (see Figure~\ref{large_ratio_family}). Orient $P$ from $v_n$ to $v_1$ and $Q$ from $u_n$ to $u_1$. The edges $u_iv_{i+1}$ and $v_iu_{i+1}$ are oriented as $(u_i,v_{i+1})$ and $(v_i,u_{i+1})$ for $1 \le i < n$.  For all pairs $u,v$ of vertices of degree $4$, we have $\theta_{D_{2n}}(u,v) =4$.  Since asymptotically almost all pairs of vertices have degree $4$, our claim follows. Hence, we have $\lim_{n \rightarrow \infty} \tfrac{\kbm(F_{2n})}{\kb(F_{2n})}=1$.
\end{proof}


While Theorem~\ref{GeneralRatioBound} gives an asymptotically sharp upper bound on the ratio $\kbm(G)/\kb(G)$ for every graph $G$, it is an open problem to determine whether there is a positive constant $c$ such that for every connected graph $G$, we have $\kbm(G)/\kb(G) \ge c$.

We briefly consider the edge connectivity analogue of $\kbm(G)/\kb(G)$ for the sake of comparison.  Let $\lambda(u,v)$, $\lb(G)$, and $\lbm(G)$ denote the edge connectivity analogues of $\kappa(u,v)$, $\kb(G)$, and $\kbm(G)$, respectively.  While the ratio $\kbm(G)/\kb(G)$ can be arbitrarily close to $1$, this is not the case for $\lbm(G)/\lb(G).$

\begin{theorem}
For every graph $G$, we have $\lbm(G)/\lb(G)\leq 1/2$.
\end{theorem}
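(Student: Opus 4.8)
The plan is to exploit a fundamental asymmetry between edge connectivity and vertex connectivity under orientation: for edge connectivity, the directed edge-cut structure forces a summing constraint that vertex connectivity escapes. Specifically, I would fix an arbitrary orientation $D$ of $G$ and compare $\theta^\lambda_D(u,v) := \lambda_D(u,v) + \lambda_D(v,u)$ against $2\lambda_G(u,v)$. Unlike the vertex case (where two distinct collections of internally disjoint paths can share internal vertices, allowing $\theta$ to reach up to $2\kappa_G(u,v)$), the edge version admits a cleaner bound. The key observation is that for each unordered pair $\{u,v\}$, the edge-disjoint directed $u$--$v$ paths and the edge-disjoint directed $v$--$u$ paths in any orientation together use \emph{distinct} edges of $G$, and each such edge, in its single fixed orientation, can contribute to transport in only one of the two directions.

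The central step is to argue via edge cuts. By the directed Menger theorem, $\lambda_D(u,v)$ equals the minimum size of a directed edge cut separating $u$ from $v$, and similarly for $\lambda_D(v,u)$. I would take a minimum edge cut $[S,\bar S]$ in the \emph{underlying} graph $G$ realizing $\lambda_G(u,v)$ with $u \in S$, $v \in \bar S$; this cut has $\lambda_G(u,v)$ edges. In the orientation $D$, each of these edges points either from $S$ to $\bar S$ or from $\bar S$ to $S$. The edges pointing out of $S$ form an upper bound on $\lambda_D(u,v)$ (every directed $u$--$v$ path must cross the cut in the forward direction), and the edges pointing into $S$ bound $\lambda_D(v,u)$. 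Since these two sets partition the cut edges, their sizes sum to exactly $\lambda_G(u,v)$, giving $\theta^\lambda_D(u,v) = \lambda_D(u,v) + \lambda_D(v,u) \le \lambda_G(u,v)$. Summing over all unordered pairs yields $K^\lambda(D) \le K^\lambda(G)$ in the appropriate edge-connectivity total, and dividing by the respective normalizations ($n(n-1)$ versus $\binom{n}{2}$) produces the factor of $1/2$, so $\lbm(G)/\lb(G) \le 1/2$.

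The main obstacle I anticipate is justifying cleanly that a single fixed edge cut of $G$ simultaneously bounds \emph{both} directed edge connectivities. One must be careful: $\lambda_D(u,v)$ is bounded by the number of forward edges across \emph{this particular} cut, but it need not equal it, and similarly the minimum directed cut for $v$ to $u$ need not live on the same vertex partition. The argument only needs the inequalities $\lambda_D(u,v) \le |\{\text{edges } S \to \bar S\}|$ and $\lambda_D(v,u) \le |\{\text{edges } \bar S \to S\}|$, both of which follow because any directed path from $u \in S$ to $v \in \bar S$ must use at least one forward edge of the cut, and the forward edges are edge-disjoint. This is exactly where vertex connectivity fails to admit the analogue: a single vertex cut does not split into disjoint "forward" and "backward" parts under orientation, which is precisely why $\kbm(G)/\kb(G)$ can approach $1$ as shown in Theorem~\ref{GeneralRatioBound}. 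Once the per-pair inequality $\theta^\lambda_D(u,v) \le \lambda_G(u,v)$ is secured, the remainder is routine averaging.
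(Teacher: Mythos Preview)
Your proposal is correct and follows essentially the same argument as the paper: fix a minimum $u$--$v$ edge cut in $G$, observe that in any orientation the forward-oriented cut edges bound $\lambda_D(u,v)$ and the backward-oriented ones bound $\lambda_D(v,u)$, so their sum is at most $\lambda_G(u,v)$, and then average. The paper's proof is slightly terser but identical in substance.
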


\begin{proof}
Let $G$ be a graph with distinct vertices $u$ and $v$.  Let $D$ be any orientation of $G$.  There is a set of $\lambda_G(u,v)$ edges whose removal leaves a component $C_u$ containing $u$, and a component $C_v$ containing $v$.  Suppose that exactly $k$ of the edges are oriented from a vertex of $C_u$ to a vertex of $C_v$.  Every directed $u$-$v$ path in $D$ must use at least one of these edges, so $\lambda_D(u,v)\leq k$.  Similarly, $\lambda_D(v,u)\leq \lambda_G(u,v)-k$.  Thus, we have $\lambda_D(u,v)+\lambda_D(v,u)\leq \lambda_G(u,v)$ for every pair of distinct vertices $u,v$ of $G$, and the theorem statement follows immediately.
\end{proof}

In proving that every $2k$-edge-connected graph has a strongly $k$-edge-connected orientation, Nash-Williams actually proved the stronger result that for every graph $G$, there is an orientation $D$ of $G$ such that for every ordered pair $(u,v)$ of distinct vertices of $D$, there exist at least $\left\lfloor \lambda_G(u,v)/2 \right\rfloor$ edge-disjoint directed $u$-$v$ paths in $D$~\cite{Nash-williams1960}.  It follows immediately that $\lbm(G) \ge \lb(D) \ge \frac{\lb(G)-1}{2}$, or equivalently,
\[
\frac{\lbm(G)}{\lb(G)}\ge \frac{1}{2}-\frac{1}{2\lb(G)}.
\]
We can easily obtain a constant lower bound on $\lbm(G)/\lb(G)$ in the case that $G$ is $2$-edge-connected, and we will see that this bound is asymptotically sharp in Section~\ref{CubicSection}.

\begin{theorem}\label{EdgeLowerBound}
If $G$ is a $2$-edge-connected graph, then $\lbm(G)/\lb(G)\geq 1/3.$
\end{theorem}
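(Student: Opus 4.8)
The plan is to reuse the Nash-Williams orientation already invoked above, but to apply its guarantee pairwise rather than only after averaging. Recall that Nash-Williams produces an orientation $D$ of $G$ in which every ordered pair $(u,v)$ of distinct vertices admits at least $\lfloor \lambda_G(u,v)/2 \rfloor$ edge-disjoint directed $u$-$v$ paths. Summing over the two orientations of each unordered pair, this gives
\[
\lambda_D(u,v) + \lambda_D(v,u) \ge 2\left\lfloor \tfrac{\lambda_G(u,v)}{2} \right\rfloor
\]
for every pair $\{u,v\}$ of distinct vertices of $G$.

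The key point is that the factor $\tfrac{1}{2}$ lost in the naive estimate $\lbm(G) \ge (\lb(G)-1)/2$ is too wasteful here: since $G$ is $2$-edge-connected we have $\lb(G) \ge 2$, and that estimate then yields only $\lbm(G)/\lb(G) \ge \tfrac{1}{4}$. Instead I would exploit $2$-edge-connectivity at the level of individual pairs. Because $G$ is $2$-edge-connected, $\lambda_G(u,v) \ge 2$ for every pair $\{u,v\}$, and an elementary check shows that $\lfloor \ell/2 \rfloor \ge \ell/3$ for every integer $\ell \ge 2$, with equality exactly when $\ell = 3$. Combining this with the displayed inequality gives the uniform pairwise bound
\[
\lambda_D(u,v) + \lambda_D(v,u) \ge \tfrac{2}{3}\,\lambda_G(u,v)
\]
for every pair $\{u,v\}$ of distinct vertices.

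Finally I would sum this bound over all $\binom{n}{2}$ unordered pairs and convert to averages. Using the identities $\lb(D) = \frac{1}{n(n-1)}\sum_{\{u,v\}}\bigl(\lambda_D(u,v)+\lambda_D(v,u)\bigr)$ and $\sum_{\{u,v\}}\lambda_G(u,v) = \binom{n}{2}\lb(G)$, the pairwise bound yields $\lb(D) \ge \tfrac{1}{3}\lb(G)$, whence $\lbm(G) \ge \lb(D) \ge \tfrac{1}{3}\lb(G)$, which is the claim.

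I expect no genuine difficulty in the calculation; the only real subtlety is the decision to apply the Nash-Williams guarantee \emph{before} averaging. The extremal behaviour is concentrated entirely at pairs with $\lambda_G(u,v)=3$, where exactly one of three edge-disjoint paths is necessarily ``lost'' to orientation, and it is precisely the hypothesis $\lambda_G(u,v)\ge 2$ that prevents the ratio from dropping below $\tfrac{1}{3}$ (pairs with $\lambda_G(u,v)=1$ would force the contribution toward $0$, which is why $2$-edge-connectivity is essential). Consistent with the equality case $\ell=3$ above, the promised asymptotic sharpness in Section~\ref{CubicSection} should come from a family in which almost all pairs have edge connectivity exactly $3$.
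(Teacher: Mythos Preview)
Your argument is correct and essentially identical to the paper's: both apply the Nash-Williams orientation pairwise to obtain $\lambda_D(u,v)+\lambda_D(v,u)\ge \tfrac{2}{3}\lambda_G(u,v)$ for every pair and then sum. The only cosmetic difference is that the paper splits into the cases $\lambda_G(u,v)$ even and $\lambda_G(u,v)$ odd (using $\lambda_G(u,v)\ge 3$ in the odd case), whereas you package both cases into the single observation $\lfloor \ell/2\rfloor \ge \ell/3$ for all integers $\ell\ge 2$.
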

\begin{proof}
Let $G$ be a $2$-edge-connected graph.  By the result of Nash-Williams, there is an orientation $D$ of $G$ such that for every pair of distinct vertices $u,v$ of $G$, we have $\lambda_D(u,v)\geq \left\lfloor \lambda_G(u,v)/2 \right\rfloor$.  We claim that $\lambda_D(u,v)+\lambda_D(v,u)\geq \tfrac{2}{3}\lambda_G(u,v)$ for every pair of distinct vertices $u,v$ of $G$, from which the theorem statement easily follows.  If $\lambda_G(u,v)$ is even, then we have
\[
\lambda_D(u,v)+\lambda_D(v,u)\geq 2\cdot\left\lfloor \lambda_G(u,v)/2 \right\rfloor=\lambda_G(u,v).
\]
On the other hand, if $\lambda_G(u,v)$ is odd, then
\[
\lambda_D(u,v)+\lambda_D(v,u)\geq 2\cdot \left\lfloor \lambda_G(u,v)/2 \right\rfloor= \lambda_G(u,v)-1\geq \tfrac{2}{3}\lambda_G(u,v),
\]
where we used the fact that $G$ is $2$-edge-connected, meaning that $\lambda_G(u,v)\geq 3$ in this case.  This completes the proof of the claim, and hence the theorem.
\end{proof}

Overall, we see that the parameters $\lbm(G)/\lb(G)$ and $\kbm(G)/\kb(G)$ appear to behave rather differently in general.


\section{Odd regular graphs}\label{OddRegular}

In this section, we study bounds on the average connectivity of optimal orientations of $r$-regular $r$-connected graphs for odd $r$.  A graph $G$ is \emph{uniformly $r$-connected} if $\kappa(G) = \kb(G)=r$ (see \cite{BeinekeOellermannPippert2002}).  Clearly all $r$-regular $r$-connected graphs are uniformly $r$-connected.

Orientations of odd regular graphs are never saturated, due to the following elementary result.

\begin{observation}\label{InAndOut}
Let $D$ be an orientation of an $r$-regular graph.  If a pair of vertices $u$, $v$ in $D$ is full, then the out-degree of~$u$ equals the in-degree of $v$.
\end{observation}

\noindent
So, if $G\neq K_2$ is $r$-regular with saturated orientation
$D$, then $r$ is even and the orientation of $D$ is regular; that is, every vertex has in- and out-degree $r/2$.  For even $r$, there do exist $r$-regular $r$-connected graphs with saturated orientations (see~\cite{HenningOellermann2004, HenningOellermann2001}).

\subsection{An upper bound on \boldmath{$\kbm(G)$}}


\begin{theorem}\label{KBM_bd_odd_regular}
Let $r\geq 3$ be odd.  If $G$ is an $r$-regular graph on $n$ vertices, then
\[
\KBM(G) \le \frac{r-1}{2} + \frac{n}{4(n-1)}.
\]
\end{theorem}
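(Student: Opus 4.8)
<br>

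The goal is to bound $\KBM(G) = K_{\max}(G)/[n(n-1)]$ from above, so equivalently I want an upper bound on $K_{\max}(G)$, the maximum total connectivity over all orientations. The plan is to fix an arbitrary orientation $D$ of $G$ and bound $K(D) = \sum_{\{u,v\}} \theta_D(u,v)$ using the potential bound from Observation~\ref{degree_upperbd}, but then correct for the fact that, as Observation~\ref{InAndOut} shows, not every pair can be full when $r$ is odd. Since $G$ is $r$-regular, $P(G) = \binom{n}{2} \min\{r,r\} = \binom{n}{2} r$, which alone would give only the trivial bound $\KBM(G) \le r/2$. The improvement to $\frac{r-1}{2} + \frac{n}{4(n-1)}$ must come from showing that $P(D)$ is strictly smaller than $P(G)$ by a controlled amount, driven by the out-degree/in-degree imbalance at each vertex.

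First I would set up notation: for each vertex $w$, write $\od(w) = \frac{r-1}{2} + t_w$ and $\id(w) = \frac{r+1}{2} - t_w$ for some half-integer or integer offset, noting that $\od(w)+\id(w)=r$ and that since $r$ is odd, $\od(w)$ and $\id(w)$ can never be equal. The cleanest bookkeeping is to let $a_w = \od(w)$ so that $\id(w) = r - a_w$. Then for an unordered pair $\{u,v\}$ the potential contribution is $\min\{\od(u),\id(v)\} + \min\{\od(v),\id(u)\} = \min\{a_u, r-a_v\} + \min\{a_v, r-a_u\}$. The key pointwise inequality I would prove is that this quantity is at most $r$, with a deficiency that I can quantify: specifically I expect $\min\{a_u,r-a_v\}+\min\{a_v,r-a_u\} \le r - |a_u - a_v|$ when the two min-terms are controlled, or more usefully a bound that aggregates nicely when summed over all pairs. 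The natural summation target is $\sum_w a_w = \sum_w \od(w) = |E(D)| = \frac{nr}{2}$, so the average out-degree is exactly $r/2$, forcing the out-degrees to cluster around $r/2$; the deficiency comes from each vertex's out-degree being at least $1/2$ away from $r/2$.

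The main technical step — and the step I expect to be the main obstacle — is converting the pointwise potential deficiencies into a clean bound $P(D) \le \binom{n}{2} r - (\text{something like } \tfrac{n}{4})$ matching the target. Rewriting the target, $\KBM(G) \le \frac{r-1}{2} + \frac{n}{4(n-1)}$ is equivalent to $K_{\max}(G) \le \binom{n}{2}(r-1) + \frac{n^2}{4}$, i.e.\ $P(D) \le \binom{n}{2}(r-1) + \frac{n^2}{4}$ for every orientation. So I need to show the total deficiency $\sum_{\{u,v\}} \big[r - (\min\{a_u,r-a_v\}+\min\{a_v,r-a_u\})\big]$ is at least $\binom{n}{2} - \frac{n^2}{4}$. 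The plan here is to analyze the pair-deficiency $d(u,v) := r - \min\{a_u,r-a_v\} - \min\{a_v,r-a_u\}$ as a function of $a_u, a_v$: when $a_u + a_v \le r$ one gets $\min\{a_v,r-a_u\}=a_v$ and $\min\{a_u,r-a_v\}=a_u$, giving $d(u,v) = r - a_u - a_v$; symmetrically when $a_u+a_v \ge r$ one gets $d(u,v) = a_u + a_v - r$; so in all cases $d(u,v) = |a_u + a_v - r|$. Thus I must show $\sum_{\{u,v\}} |a_u + a_v - r| \ge \binom{n}{2} - \frac{n^2}{4}$.

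Writing $b_w = a_w - r/2$ (so $\sum_w b_w = 0$ and each $|b_w| \ge 1/2$ since $a_w$ is an integer and $r/2$ is not), the deficiency becomes $\sum_{\{u,v\}} |b_u + b_v|$, and I want to show this is at least $\frac{n}{4}$ — more precisely at least $\binom{n}{2}-\frac{n^2}{4} = \frac{n(n-2)}{4}\cdot\frac{?}{}$; I would recompute the exact constant, but the governing phenomenon is clear. The final paragraph would verify this lower bound on $\sum_{\{u,v\}}|b_u+b_v|$ subject to $\sum b_w = 0$ and $|b_w|\ge \frac12$. I would argue that the worst case (smallest total) occurs when the $b_w$ are as balanced as possible, i.e.\ half equal to $+\frac12$ and half equal to $-\frac12$; in that configuration most pairs cancel to give $|b_u+b_v|=0$ while the like-signed pairs each contribute $1$, and counting these yields exactly the required total. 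I would then check the remaining parity cases ($n$ odd) and confirm the bound, completing the proof. The obstacle is the combinatorial optimization in this last step — showing the balanced split is extremal and handling the arithmetic to land precisely on $\frac{n}{4(n-1)}$ rather than a weaker constant.
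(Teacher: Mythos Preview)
Your approach is correct and lands on the right bound, but it is more circuitous than the paper's two-line argument. The paper works directly with $K(D)$: every non-full pair has $\theta_D(u,v)\le r-1$, and by Observation~\ref{InAndOut} every full pair has one vertex in $A=\{w:\id(w)>\od(w)\}$ and one in $V\setminus A$ (since $r$ odd forces $\id(w)\neq\od(w)$ for every $w$), so there are at most $|A|\,|V\setminus A|\le n^2/4$ full pairs. This gives $K(D)\le (r-1)\binom{n}{2}+n^2/4$ immediately, and dividing by $n(n-1)$ finishes.

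Your detour through $P(D)$ reproduces this same sign-partition bound after extra computation. The optimization you flag as the main obstacle is in fact trivial once you observe that each $b_u+b_v=a_u+a_v-r$ is an \emph{integer}: hence $|b_u+b_v|\ge 1$ unless $b_u=-b_v$, which in particular forces one positive and one negative $b$-value. So at most $|\{w:b_w>0\}|\cdot|\{w:b_w<0\}|\le n^2/4$ pairs contribute zero, and the remaining $\binom{n}{2}-n^2/4$ pairs each contribute at least $1$ --- exactly what you needed. No extremal analysis of the balanced configuration is required, and the $n$-odd parity case you mention never arises (an $r$-regular graph with $r$ odd forces $n$ even).
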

\begin{proof}
Let $D$ be an arbitrary orientation of $G$.
Let $C$ be the number of full pairs in the orientation $D$.
Then the total connectivity of $D$ is at most $C+(r-1)n(n-1)/2$.
Let $A$ be the vertices of $G$ with larger in-degree than out-degree.
Then it follows from Observation~\ref{InAndOut} that a full pair contains
one vertex belonging to $A$ and one vertex not belonging to $A$. So the number of full pairs of $D$ is at most $n^2/4$.  Our result now follows. 
\end{proof}

We now describe, for every odd $r\geq 3$, a family of $r$-regular graphs achieving the upper bound of Theorem \ref{KBM_bd_odd_regular}.
Let $s, t \ge 2$ be integers, and let $H_{s,t}$ be the graph constructed as follows.
Take $2s$ sets of vertices $V_1, \ldots, V_{2s}$, each of size $t$.
For every odd $i$, join every vertex in $V_i$ to every vertex in $V_{i+1}$. For
every even $i$, add all edges between the vertices of $V_{i}$ and $V_{i+1}$ apart from a perfect matching (where subscripts are expressed modulo $2s$).
Then $H_{s,t}$ is a regular graph of degree $2t-1$.
For example, $H_{2,2}$ is the hypercube~$Q_3$, and  $H_{5,3}$ is shown in Figure \ref{H_53}.

\begin{figure}[h!]
\begin{center}
\begin{tikzpicture}
\tikzstyle{every path}=[color =black, line width = 0.3 pt, > = triangle 45]
\pgfmathsetmacro{\n}{5}
\pgfmathtruncatemacro{\m}{\n-1}
\pgfmathtruncatemacro{\p}{2*\n-1}
\foreach \x in {0,...,\m}
{
\pgfmathtruncatemacro{\a}{6*\x}
\pgfmathtruncatemacro{\b}{6*\x+1}
\pgfmathtruncatemacro{\c}{6*\x+2}
\pgfmathtruncatemacro{\d}{6*\x+3}
\pgfmathtruncatemacro{\e}{6*\x+4}
\pgfmathtruncatemacro{\f}{6*\x+5}
\hollowvertex (\a) at (360*\x/\n:1.5) {};
\hollowvertex (\b) at (360*\x/\n:2.25) {};
\hollowvertex (\c) at (360*\x/\n:3) {};
\vertex (\d) at (360*\x/\n+180/\n:1.5) {};
\vertex (\e) at (360*\x/\n+180/\n:2.25) {};
\vertex (\f) at (360*\x/\n+180/\n:3) {};
}
\foreach \x in {0,...,\p}
{
\pgfmathtruncatemacro{\ya}{3*\x}
\pgfmathtruncatemacro{\zaa}{mod(\ya+3,6*\n)}
\pgfmathtruncatemacro{\zab}{mod(\ya+4,6*\n)}
\pgfmathtruncatemacro{\yb}{3*\x+1}
\pgfmathtruncatemacro{\zba}{mod(\yb+3,6*\n)}
\pgfmathtruncatemacro{\zbb}{mod(\yb+4,6*\n)}
\pgfmathtruncatemacro{\yc}{3*\x+2}
\pgfmathtruncatemacro{\zca}{mod(\yc+3,6*\n)}
\pgfmathtruncatemacro{\zcb}{mod(\yc+1,6*\n)}
\path
(\ya) edge (\zaa)
(\ya) edge (\zab)
(\yb) edge (\zba)
(\yb) edge (\zbb)
(\yc) edge (\zca)
(\yc) edge (\zcb)
;
}
\foreach \x in {0,...,\m}
{
\pgfmathtruncatemacro{\y}{2*\x+1}
\pgfmathtruncatemacro{\ya}{3*\y}
\pgfmathtruncatemacro{\za}{mod(\ya+5,6*\n)}
\pgfmathtruncatemacro{\yb}{3*\y+1}
\pgfmathtruncatemacro{\zb}{mod(\yb+2,6*\n)}
\pgfmathtruncatemacro{\yc}{3*\y+2}
\pgfmathtruncatemacro{\zc}{mod(\yc+2,6*\n)}
\path
(\ya) edge (\za)
(\yb) edge (\zb)
(\yc) edge (\zc)
;
}
\end{tikzpicture}
\caption{The graph $H_{5,3}$} \label{H_53}
\end{center}
\end{figure}
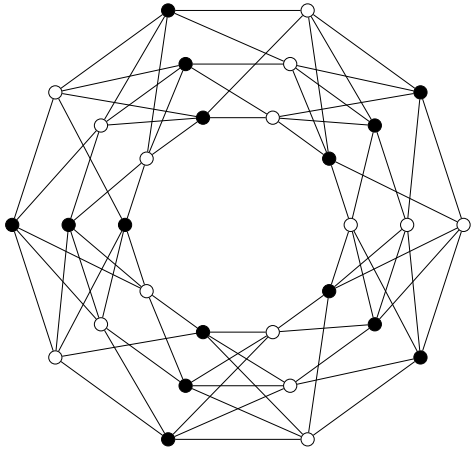

Now, let $D_{s,t}$ be the orientation of $H_{s,t}$ obtained by placing the sets $V_1,V_2,\dots,V_{2s}$ around a circle and orienting edges clockwise (see Figure~\ref{H_53}). To aid us in our discussions, we colour the vertices in the odd subscripted sets white, and those in the even subscripted sets black. In $D_{s,t}$, the white vertices have in-degree $t-1$ and out-degree $t$, while the black vertices have out-degree $t-1$ and in-degree $t$. We claim that there are $t$ internally disjoint paths from any white vertex to any black vertex; and there are $t-1$ internally disjoint paths from any white vertex to any other white vertex, as well as from any black vertex to any other black vertex, and from any black vertex to any white vertex.  It follows immediately that $\kb(D_{s,t})=\kbm(H_{s,t})$ matches the upper bound given by Theorem~\ref{KBM_bd_odd_regular}.

In the cubic case, the bound of Theorem \ref{KBM_bd_odd_regular} is also achieved by the M\"{o}bius ladders. Figure \ref{mobiusladder} shows a M\"{o}bius ladder of order $14$.

\begin{figure}[h!]
\begin{center}
\begin{tikzpicture}
\path[use as bounding box] (-0.3,-1) rectangle (7,2);
\tikzstyle{every path}=[color =black, line width = 0.3 pt, > = triangle 45]
\pgfmathsetmacro{\n}{7}
\pgfmathtruncatemacro{\m}{\n-1}
\pgfmathtruncatemacro{\p}{\n-2}
\foreach \x in {0,...,\m}
{
\pgfmathtruncatemacro{\y}{\x+\n}
\vertex (\x) at (\x,0) {};
\hollowvertex (\y) at (\x,1) {};
}
\foreach \x in {0,...,\m}
{
\pgfmathtruncatemacro{\y}{\x+\n}
\path
(\x) edge (\y)
;
}
\foreach \x in {0,...,\p}
{
\pgfmathtruncatemacro{\xa}{\x+1}
\pgfmathtruncatemacro{\ya}{\x+\n}
\pgfmathtruncatemacro{\yb}{\ya+1}
\path
(\x) edge (\xa)
(\ya) edge (\yb)
;
}
\pgfmathtruncatemacro{\q}{2*\n-1}
\path
(0) edge[out=-20,in=-30,looseness=1.3] (\q)
(\n) edge[out=20,in=30,looseness=1.3] (\m)
;

\end{tikzpicture}
\caption{The M\"{o}bius ladder of order 14} \label{mobiusladder}
\end{center}
\end{figure}
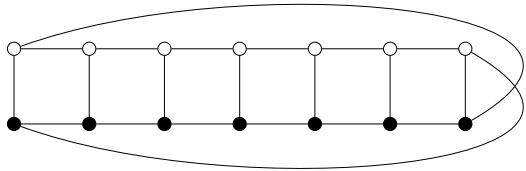


\subsection{The average connectivity in 3-connected cubic graphs}\label{CubicSection}

By Theorem~\ref{Thomassen}, we have $\kbm(G) \ge 2$ for every $4$-connected $4$-regular graph $G$.  In contrast, for a $3$-connected cubic graph $G$, we show that $\kbm(G)$ can be arbitrarily close to $1$.  This demonstrates that the lower bound of Theorem~\ref{Robbins} is asymptotically sharp even for $3$-connected cubic graphs.  Additionally, we see that the ratio $\kbm(G)/\kb(G)$ can be arbitrarily close to $1/3$ for a $3$-connected cubic graph $G$ (and it cannot be smaller than $1/3$).

We begin by considering triangles in cubic graphs.  Let $T$ be a triangle in a cubic graph $G$. In an orientation of the graph $G$ we define a vertex $u$ of $T$ as \emph{bad} if $\theta(u,v) \le 2$ for all vertices $v \in V(G)-V(T)$.

\begin{lemma} \label{triangles_have_bad_vertices}
Let $G$ be a cubic graph and let $T$ be a triangle in  $G$. Then in any orientation of $G$, at least
one vertex of $T$ is bad.
\end{lemma}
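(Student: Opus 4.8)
The plan is to fix an arbitrary orientation $D$ of $G$ and exploit the fact that, since $G$ is cubic, each vertex of the triangle $T$ has exactly one neighbour outside $T$. Writing $V(T)=\{a,b,c\}$, the three edges joining $T$ to $V(G)\setminus V(T)$ --- call them $e_a,e_b,e_c$ --- are therefore the only edges across this partition, so they form an edge cut of size $3$. I would let $k$ denote the number of these edges that $D$ orients out of $T$, so that $3-k$ of them point into $T$.

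First I would establish the edge-cut bounds $\kappa_D(u,v)\le k$ and $\kappa_D(v,u)\le 3-k$ for every $u\in V(T)$ and every $v\in V(G)\setminus V(T)$, which in particular yields $\theta(u,v)\le 3$. For the first inequality, every directed $u$--$v$ path must leave $T$, hence contains a first edge crossing from $T$ to the outside; I would argue that in a family of internally disjoint $u$--$v$ paths these first crossing edges are pairwise distinct, since two paths sharing a first crossing edge $xx'$ would share the internal vertex $x$ or $x'$. Thus the number of such paths is at most the number $k$ of outward-oriented cut edges, and the bound $\kappa_D(v,u)\le 3-k$ follows symmetrically using last crossing edges.

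Next I would translate ``not bad'' into a degree condition. A vertex $u\in V(T)$ fails to be bad precisely when $\theta(u,v)=3$ for some external $v$ (equality being forced since $\theta(u,v)$ is an integer with $\theta(u,v)\le 3$). Combining $\kappa_D(u,v)\le k$ with $\kappa_D(v,u)\le 3-k$ shows that $\theta(u,v)=3$ can occur only when $\kappa_D(u,v)=k$ and $\kappa_D(v,u)=3-k$; since also $\kappa_D(u,v)\le \od(u)$ and $\kappa_D(v,u)\le\id(u)=3-\od(u)$, this pins down $\od(u)=k$ exactly. Hence every non-bad vertex of $T$ has out-degree exactly $k$.

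The proof then finishes with a parity count. Assume for contradiction that none of $a,b,c$ is bad; then $\od(a)=\od(b)=\od(c)=k$, so their sum is $3k$. On the other hand, each of the three edges inside $T$ contributes exactly $1$ to $\od(a)+\od(b)+\od(c)$, since its tail lies in $T$, while exactly $k$ boundary edges are oriented outward, giving $\od(a)+\od(b)+\od(c)=3+k$. Therefore $3k=3+k$, i.e.\ $k=3/2$, which is impossible, so at least one vertex of $T$ is bad. I expect the main obstacle to be making the edge-cut bound $\kappa_D(u,v)\le k$ fully rigorous --- in particular handling paths that leave and re-enter $T$ and the degenerate case where a crossing edge goes directly to $v$ --- while the concluding count is a routine parity argument once the degree characterisation is in hand.
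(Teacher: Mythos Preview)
Your proof is correct and takes essentially the same approach as the paper: both establish that a non-bad vertex of $T$ must have out-degree equal to the number $s$ of boundary arcs leaving $T$, and then observe this cannot hold for all three vertices simultaneously. The paper's version is simply terser, asserting the degree condition and its impossibility without spelling out the edge-cut bound or the count $3s=3+s$ that you make explicit.
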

\begin{proof}
Let $D$ be an orientation of $G$. Consider the three arcs incident with vertices of $T$, but not belonging to $T$.  Suppose that $s$ of them are oriented away from $T$. In order to be in a full pair with a vertex not in $T$, a vertex of $T$ must have out-degree $s$. This is not possible for all vertices of $T$.
\end{proof}

Now, given a graph $G$, the graph $I(G)$, which we will call the \emph{inflation} of $G$ (see~\cite{Chvatal1973}), is defined as the line graph of the subdivision of $G$. In the case that $G$ is a cubic graph, the process of constructing $I(G)$ is sometimes referred to as making the \emph{wye-delta replacement} at each vertex of $G$. For a cubic graph $G$, the graph $I(G)$ is obtained as follows.  For every vertex $v \in V(G)$, let $T_v$ be a triangle with vertices $v_x,$ $v_y,$ and $v_z$, where $x,$ $y,$ and $z$ are the neighbours of $v$ in $G$.   The inflation $I(G)$ is obtained from the disjoint union $\cup_{v\in V(G)}T_v$ by joining $v_x$ with $x_v$, for every edge $vx$ of $G$. The inflation of $K_4$ is shown in Figure \ref{inflation}.

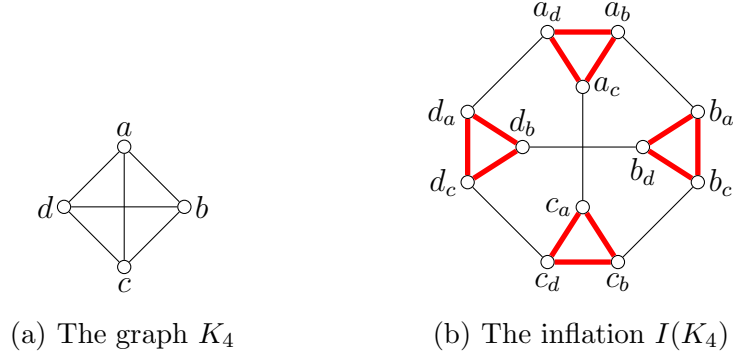
\begin{figure}[htb]
\centering
\begin{subfigure}[t]{0.4\textwidth}
\centering
\begin{tikzpicture}[scale=0.8]
\foreach \x in {0,1,2,3}
{
\hollowvertex (\x) at (\x*90:1){};
}
\node[above] at (1) {$a$};
\node[right] at (0) {$b$};
\node[below] at (3) {$c$};
\node[left] at (2) {$d$};
\path
(0) edge (1)
(0) edge (2)
(0) edge (3)
(1) edge (2)
(1) edge (3)
(2) edge (3)
;
\end{tikzpicture}
\caption{The graph $K_4$}
\end{subfigure}%
\begin{subfigure}[t]{0.4\textwidth}
\centering
\begin{tikzpicture}[scale=0.8]
\foreach \x in {0,1,2,3}
{
\hollowvertex (\x) at (\x*90:1){};
\pgfmathtruncatemacro{\y}{\x+4};
\hollowvertex (\y) at (\x*90+17:2){};
\pgfmathtruncatemacro{\z}{\x+8};
\hollowvertex (\z) at (\x*90-17:2){};
}
\node[right] at (1) {$a_c$};
\node[below] at (0) {$b_d$};
\node[left] at (3) {$c_a$};
\node[above] at (2) {$d_b$};
\node[above] at (9) {$a_b$};
\node[right] at (8) {$b_c$};
\node[below] at (11) {$c_d$};
\node[left] at (10) {$d_a$};
\node[above] at (5) {$a_d$};
\node[right] at (4) {$b_a$};
\node[below] at (7) {$c_b$};
\node[left] at (6) {$d_c$};
\path
(0) edge (2)
(1) edge (3)
(4) edge (9)
(5) edge (10)
(6) edge (11)
(7) edge (8)
;
\foreach \x in {0,1,2,3}
{
\pgfmathtruncatemacro{\y}{\x+4};
\pgfmathtruncatemacro{\z}{\x+8};
\path[color=red,line width=2 pt]
(\x) edge (\y)
(\x) edge (\z)
(\y) edge (\z)
;
}
\end{tikzpicture}
\caption{The inflation $I(K_4)$}
\end{subfigure}
\caption{The graphs $K_4$ and $I(K_4)$} \label{inflation}
\end{figure}

If $F$ is an orientation of $I(G)$, then the orientation of $G$ obtained by assigning $vx$ the orientation $(v,x)$ if $(v_x,x_v)$ is in $I(G)$, or $(x,v)$ if $(x_v,v_x)$ is in $I(G)$, is called the \emph{projection} of $F$ onto $G$. On the other hand, if $D$ is an orientation of $G$, and $F$ is an orientation of $I(G)$ with the property that $(v,x) \in E(D)$ implies $(v_x,x_v) \in E(F)$, then we say that $F$ is a \emph{lifting} of $D$ to $I(G)$.

\begin{lemma}\label{projected_orientations}
Let $G$ be a cubic graph. Let $F$ be an orientation of $I(G)$ and let $D$ be the projection of $F$ onto $G$.  If $D$ has exactly $C$ full pairs, then $F$ has at most $4C+2|V(G)|$ full pairs.
\end{lemma}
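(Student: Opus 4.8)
Let me first understand the inflation construction and what "full pair" means here.

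Let $G$ be cubic. For each vertex $v$ with neighbors $x,y,z$, we get a triangle $T_v$ with vertices $v_x, v_y, v_z$. These triangle vertices have degree 3 in $I(G)$: two edges within the triangle, one edge connecting to another triangle ($v_x$ connects to $x_v$). So $I(G)$ is cubic.

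The projection: $F$ (orientation of $I(G)$) projects to $D$ on $G$ by looking at how the connecting edge $v_x - x_v$ is oriented.

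A lifting: given $D$, an orientation $F$ is a lifting if $(v,x) \in E(D)$ implies $(v_x, x_v) \in E(F)$.

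**What we need to prove.** If $D$ has exactly $C$ full pairs, then $F$ has at most $4C + 2|V(G)|$ full pairs.

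Since $G$ is cubic, $\min\{\deg(u), \deg(v)\} = 3$ for all pairs. So a pair $(p,q)$ in $I(G)$ is full iff $\theta_F(p,q) = 3$.

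**Thinking about the relationship.** Each vertex $v \in V(G)$ corresponds to triangle $T_v$ with 3 vertices in $I(G)$. So $|V(I(G))| = 3|V(G)|$.

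Pairs of vertices in $I(G)$ come in types:
- Both in the same triangle $T_v$ (within-triangle pairs)
- In different triangles $T_v, T_w$ (cross-triangle pairs)

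There are $3|V(G)|$ within-triangle pairs total (3 per triangle).

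For cross-triangle pairs: a pair $\{v_a, w_b\}$ with $v \neq w$.

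**Key connection to full pairs in $D$.** Full pairs in $D$ are pairs $\{v, w\}$ in $G$ with $\theta_D(v,w) = 3$ (since $G$ is cubic, $\min \deg = 3$).

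For $\theta_F(p, q) = 3$ (full in $I(G)$), we need 3 internally-disjoint directed paths total (in both directions combined) between $p$ and $q$.

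**Strategy:** I want to bound full pairs in $I(G)$ by relating them to full pairs in $D$.

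By Lemma \ref{triangles_have_bad_vertices}, in any orientation of a cubic graph, each triangle has at least one bad vertex. But here we're working in $I(G)$, and the triangles $T_v$ are triangles in $I(G)$!

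So in $F$, each triangle $T_v$ has at least one bad vertex. A bad vertex $u$ in $T_v$ satisfies $\theta_F(u, q) \le 2$ for all $q \notin V(T_v)$, meaning $u$ is in NO full cross-triangle pair.

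**Counting full pairs in $I(G)$:**

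Within-triangle pairs: at most $3|V(G)|$ of these exist total, but actually I should count how many can be full. Within each triangle $T_v$ (3 vertices, 3 edges), oriented as some orientation of a triangle. The $\theta$ value within the triangle... Let me think. Actually the within-triangle full pairs are bounded by $3|V(G)|$ trivially. But we want tighter. Hmm, let me reconsider the target: $4C + 2|V(G)|$.

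Let me split differently.

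**Within-triangle full pairs:** At most 3 per triangle, so $\le 3|V(G)|$. This is too big; we only have $2|V(G)|$ budget here. Need to show at most 2 full within-triangle pairs per triangle. Since a triangle (3 vertices) oriented has either a cyclic orientation or not. Actually: in a triangle with vertices $p,q,r$, $\theta(p,q) \le \min(\text{od}(p),\text{id}(q)) + \min(\text{od}(q),\text{id}(p))$. In $I(G)$ each triangle vertex has total degree 3, with one edge leaving the triangle. The full analysis: I'd show each triangle contributes at most 2 full within-triangle pairs. (If all 3 were full, summing $\theta$ values would exceed what's possible given degrees.)

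**Cross-triangle full pairs:** These connect $T_v$ and $T_w$. By the bad-vertex lemma, each triangle has a bad vertex not participating in any cross-triangle full pair. So only 2 vertices per triangle can be in cross full pairs.

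**Main claim:** A cross-triangle full pair $\{v_a, w_b\}$ forces $\{v,w\}$ to be a full pair in $D$, and each full pair $\{v,w\}$ in $D$ can "support" at most 4 cross full pairs (since only $\le 2$ non-bad vertices in $T_v$ and $\le 2$ in $T_w$, giving $\le 2 \times 2 = 4$ pairs).

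So cross full pairs $\le 4C$, within full pairs $\le 2|V(G)|$, total $\le 4C + 2|V(G)|$.

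**The main obstacle:** Proving that a full cross pair $\{v_a, w_b\}$ in $F$ implies $\{v,w\}$ is full in $D$. This requires showing that 3 internally-disjoint directed paths between $v_a$ and $w_b$ in $I(G)$ project down to 3 appropriately-directed disjoint paths between $v$ and $w$ in $D$ — likely using the structure that paths in $I(G)$ between different triangles must traverse connecting edges, which project to the corresponding edge orientations in $D$.
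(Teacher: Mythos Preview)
Your outline is correct and essentially matches the paper's proof: split full pairs in $F$ into within-triangle pairs (at most $2$ per triangle) and cross-triangle pairs (at most $4$ per pair of triangles via the bad-vertex lemma), and show that any cross-triangle full pair projects to a full pair in $D$. For your ``main obstacle,'' the key observation the paper uses is that a path in $I(G)$ passing through an intermediate triangle $T_z$ must occupy at least two of its three vertices, so two internally disjoint directed paths between distinct triangles cannot share any intermediate triangle and therefore contract to internally disjoint directed paths in $D$; for the within-triangle bound the paper gives a short structural argument about how the external arcs at a full pair must point in opposite directions, which is equivalent in spirit to (and as easy as) your degree-count via Observation~\ref{InAndOut}.
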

\begin{proof}
Suppose $T_x$ and $T_y$ are distinct triangles of $I(G)$. Then, by Lemma \ref{triangles_have_bad_vertices}, there are at most four full pairs of vertices of $I(G)$ that have one vertex in $T_x$ and the other in $T_y$.
If $P$ and $Q$ are (internally) disjoint $T_x$--$T_y$ paths in $I(G)$, then, apart from vertices in $T_x$ or $T_y$, the paths $P$ and $Q$ cannot contain vertices from the same triangle of $I(G)$. So $D$ has a pair of internally disjoint $x$--$y$ paths obtained from $P$ and $Q$ by contracting every triangle in $I(G)$ to the corresponding vertex in $G$.
It follows that $x$ and $y$ must be a full pair in $D$.  Thus, the number of full pairs of $F$ that are not contained in a single triangle is at most four times the number of full pairs of $D$.

Now it suffices to show that every triangle of $I(G)$ contains at most two full pairs of vertices. Suppose that $a$ and $b$ make up a full pair and belong to the same triangle $T_z$ of $I(G)$. If $\mathcal{F}_{a \rightarrow b}$ is a family of $\kappa_{I(G)}(a,b)$ internally disjoint $a$--$b$ paths, and $\mathcal{F}_{b \rightarrow a}$ is a family of $\kappa_{I(G)}(b,a)$ internally disjoint $b$--$a$ paths, then $\kappa_{I(G)}(a,b) +\kappa_{I(G)}(b,a)=3$, and every arc incident with $a$ and every arc incident with $b$ appears in $\mathcal{F}_{a \rightarrow b} \cup \mathcal{F}_{b \rightarrow a}$. So the arc between $a$ and $b$ constitutes one of the paths in this union, and a second path necessarily has length 2 and passes through the third vertex of $T_z$. The internal vertices of the third path in this union are necessarily not in $T_z$.  Moreover, if the arc incident with $a$ but not in $T_z$ is directed away from $a$, then the arc incident with $b$ and not in $T_z$ is directed towards $b$, and vice versa. This can only happen for two pairs of vertices in $T_z$.
\end{proof}

The previous lemma shows how orientations of $I(G)$ give rise to orientations of $G$ where connectedness properties between triangles in $I(G)$ translate to connectedness properties between the corresponding vertices of $G$. On the other hand, if $D$ is an orientation of $G$, and $F$ is a lifting of $D$ to $I(G)$ in which every triangle is oriented cyclically, then the next lemma establishes a connection between the connectedness properties of $D$ and those of $F$.

\begin{lemma} \label{lifted_orientations}
Let $G$ be a $3$-connected cubic graph, and let $D$ be a strong orientation of $G$. Let $F$ be a lifting of $D$ to $I(G)$ in which every triangle is oriented cyclically. If $D$ has exactly $C$ full pairs, then $F$ has $4C+2|V(G)|$ full pairs.
\end{lemma}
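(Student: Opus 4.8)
The plan is to use Lemma~\ref{projected_orientations} for the upper bound and a direct path construction for the matching lower bound. First I would observe that, since $F$ is a lifting of $D$, reading off the orientations of the connecting arcs of $I(G)$ recovers $D$ exactly; that is, the projection of $F$ onto $G$ is $D$ itself. As $D$ has $C$ full pairs, Lemma~\ref{projected_orientations} immediately gives that $F$ has at most $4C+2|V(G)|$ full pairs. It therefore suffices to exhibit at least $4C+2|V(G)|$ full pairs in $F$. I would split these into two disjoint families: pairs lying inside a single triangle $T_v$, for which I aim to produce exactly $2|V(G)|$ full pairs, and pairs whose two vertices lie in distinct triangles, for which I aim to produce at least $4C$ full pairs.

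For the within-triangle count, fix a triangle $T_v$, where $x,y,z$ are the neighbours of $v$, oriented cyclically. Since $D$ is strong, $v$ has both an in-arc and an out-arc in $D$, so among the three connecting arcs at $v_x,v_y,v_z$ both orientations occur; a short case check shows that exactly two of the three pairs in $T_v$ have their connecting arcs oppositely oriented. For such a pair, say $v_x$ (connector directed out) and $v_y$ (connector directed in), I would build three internally disjoint paths: the triangle arc between $v_x$ and $v_y$, the complementary path around the triangle through $v_z$, and a third path leaving $v_x$ along its connector and returning to $v_y$ along its connector. The existence of this third path reduces to the claim that the out-neighbour $x$ of $v$ can reach the in-neighbour $y$ of $v$ in $D-v$. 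This is where the cubic hypothesis is essential: since $\deg_G(v)=3$, the vertex $v$ cannot have two out-neighbours and two in-neighbours at once, so any simple $x$--$y$ path that used $v$ would be forced to repeat either $x$ or $y$; hence $x$ reaches $y$ in $D-v$, and the external portion of the third path can be routed through the (cyclically oriented, hence internally strongly connected) triangles it meets while avoiding $T_v$. This makes each of the two candidate pairs full, giving exactly two full pairs per triangle by the upper bound of Lemma~\ref{projected_orientations}, and so $2|V(G)|$ in total.

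For the cross-triangle count, fix a full pair $\{x,y\}$ of $D$; since $D$ is strong we have $\kappa_D(x,y)=2$ and $\kappa_D(y,x)=1$ up to symmetry, and the three internally disjoint paths use all three arcs at $x$ and at $y$. Each such $D$-path lifts to a directed path in $F$ that enters and leaves every intermediate triangle through its connectors; the three lifted paths are internally disjoint outside $T_x$ and $T_y$ and join three prescribed ``ports'' of $T_x$ to three prescribed ports of $T_y$. The goal is to show that, for four suitable choices of $(x_i,y_j)\in T_x\times T_y$, these external paths combine with short traversals inside $T_x$ and $T_y$ to give $\theta_F(x_i,y_j)=3$. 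Since distinct full pairs of $D$ contribute full pairs in distinct products $T_x\times T_y$, summing over the $C$ full pairs yields at least $4C$ cross-triangle full pairs, and combined with the within-triangle count and the upper bound this forces equality throughout.

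I expect the cross-triangle step to be the main obstacle. The difficulty is that each triangle is a directed $3$-cycle, so from a given vertex $x_i$ one cannot reach the other two ports by internally disjoint triangle paths: reaching the ``far'' port forces passing through the ``near'' one. Consequently only particular alignments of $x_i$ and $y_j$ with the cyclic orders of $T_x$ and $T_y$ admit three genuinely internally disjoint paths, and carefully tracking these alignments, together with the in/out pattern of the connectors forced by the $2+1$ path structure at $x$ and $y$, is exactly what pins the number of full cross-triangle pairs down to four. This same rigidity is what makes the lower bound meet, rather than exceed, the upper bound of Lemma~\ref{projected_orientations}, so verifying internal disjointness of the lifted paths through the cyclic triangles is the crux of the argument.
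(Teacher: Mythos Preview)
Your proposal is correct and follows essentially the same approach as the paper: obtain the upper bound from Lemma~\ref{projected_orientations} (the paper leaves this step implicit) and then exhibit $2|V(G)|$ within-triangle and $4C$ cross-triangle full pairs directly. For the cross-triangle step you flag as the main obstacle, the paper bypasses the alignment bookkeeping by naming the four pairs outright---with $x$ having out-neighbours $a_1,a_2$ and $y$ having in-neighbours $b_1,b_2$, the pairs are $\{x_{a_i},y_{b_j}\}$ for $i,j\in\{1,2\}$---and noting that the two internally disjoint $x$--$y$ paths in $D$ lift (with short detours through the cyclic triangles) to two internally disjoint $x_{a_i}$--$y_{b_j}$ paths in $F$, while strongness of $F$ supplies the return path.
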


\begin{proof}
Note that the orientation $F$ of $I(G)$ is strong.  Consider a full pair $x$ and~$y$ in~$D$. Without loss of generality, assume that $x$ has out-degree $2$ and
$y$ has in-degree $2$. Let $a_1,a_2$ be the out-neighbours and $a_3$ the in-neighbour of $x$ in $D$. Let $b_1,b_2$ be the in-neighbours and $b_3$ the out-neighbour of $y$ in $D$. Then for $i\in \{1,2\}$ and $j \in \{1,2 \},$ there are two disjoint paths from $x_{a_i}$ to $y_{b_j}$. It follows that for two distinct triangles in $I(G)$, we have four
full pairs of vertices in $I(G)$ (where each pair contains a vertex from each of these two triangles).

Since every arc in $D$ is in a cycle, it follows that for $i\in \{1,2\}$ there is a path from
$x_{a_i}$ to $x_{a_3}$ that is internally disjoint from $T_x$. Thus for every triangle $T_x$ in $D$, there are two full pairs
with both vertices in $T_x$. This gives the desired result.
\end{proof}

Together, Lemma~\ref{projected_orientations} and Lemma~\ref{lifted_orientations} give the following.

\begin{theorem}\label{LiftStrong}
Let $G$ be a $3$-connected cubic graph of order $n$.  If $G$ has a strong optimal orientation $D$, and $F$ is a lifting of $D$ to $I(G)$ in which every triangle is oriented cyclically, then $F$ is a strong optimal orientation of $I(G)$. Moreover, \[\KBM(I(G)) = \kb(F) = 1+ \frac{4n(n-1)[\KBM(G)-1]+2n}{3n(3n-1)}.\]
\end{theorem}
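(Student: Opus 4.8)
The plan is to reduce the entire statement to a count of full pairs, exploiting the following elementary identity. In a \emph{strong} orientation of a cubic graph of order $m$, every unordered pair $\{u,v\}$ satisfies $2\le\theta(u,v)\le 3$, with $\theta(u,v)=3$ exactly when the pair is full. Hence if such an orientation has exactly $c$ full pairs, then $K=3c+2\bigl(\binom{m}{2}-c\bigr)=c+m(m-1)$, so its average connectivity is $1+\tfrac{c}{m(m-1)}$.

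First I would apply this in two ways. Since $D$ is a strong optimal orientation of the cubic graph $G$ of order $n$, writing $C$ for its number of full pairs gives $\KBM(G)=1+\tfrac{C}{n(n-1)}$, i.e.\ $C=n(n-1)[\KBM(G)-1]$. Since $D$ is strong and every triangle of $F$ is oriented cyclically, $F$ is strong: any directed path in $D$ lifts to a directed path in $F$, and each cyclically oriented triangle is internally strongly connected. Now $I(G)$ is cubic of order $3n$, and by Lemma~\ref{lifted_orientations} the lifting $F$ has exactly $4C+2n$ full pairs, so the identity with $m=3n$ gives
\[\kb(F)=1+\frac{4C+2n}{3n(3n-1)}=1+\frac{4n(n-1)[\KBM(G)-1]+2n}{3n(3n-1)},\]
the claimed value. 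It remains to prove $F$ is optimal, i.e.\ $K(F')\le K(F)$ for every orientation $F'$ of $I(G)$.

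This optimality is the hard part, and the obstacle is that the projection $D'$ of an arbitrary $F'$ onto $G$ need not be strong, so its number of full pairs $C'$ can exceed $C$. To control $K(F')$ I would bound it triangle-pair by triangle-pair. Classify the pairs $\{x,y\}$ of $V(G)$ by $\theta_{D'}(x,y)\in\{0,1,2,3\}$, with counts $C',b',c',d'$. By Lemma~\ref{projected_orientations} each triangle of $I(G)$ has at most two full pairs, contributing at most $8n$ in total. For the nine cross pairs between distinct triangles $T_x,T_y$: if $\theta_{D'}(x,y)=3$, at most four are full, giving at most $22$; if $\theta_{D'}(x,y)=2$, none is full (a full cross pair forces $x,y$ full in $D'$, as in Lemma~\ref{projected_orientations}), giving at most $18$; if $\theta_{D'}(x,y)=1$, then contracting triangles shows directed $T_x$--$T_y$ paths exist in only one direction and are at most one internally disjoint, so every cross pair has $\theta_{F'}\le 1$, giving at most $9$; and if $\theta_{D'}(x,y)=0$, no directed path joins $T_x$ and $T_y$, giving $0$. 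Summing, $K(F')\le 8n+22C'+18b'+9c'$.

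Finally I would invoke the constraint on $D'$. From $K(D')=3C'+2b'+c'\le\Km(G)=C+n(n-1)$ and $C'+b'+c'+d'=\binom{n}{2}$, a one-line manipulation yields $C'-C\le c'+2d'$. Using $9n(n-1)=18\binom{n}{2}$, the desired inequality $8n+22C'+18b'+9c'\le K(F)=8n+4C+9n(n-1)$ reduces to $C'\le C+\tfrac94c'+\tfrac92d'$, which follows immediately from $C'-C\le c'+2d'$ since $1\le\tfrac94$ and $2\le\tfrac92$. Thus $K(F')\le K(F)$ for all $F'$, so $F$ is optimal and $\KBM(I(G))=\kb(F)$. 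I expect the cross-pair case analysis, particularly justifying $\theta_{F'}\le 1$ and $\theta_{F'}=0$ when $\theta_{D'}(x,y)\le 1$ via the triangle-contraction argument of Lemma~\ref{projected_orientations}, to be the main technical step.
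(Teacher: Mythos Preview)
Your computation of $\kb(F)$ matches the paper's argument exactly: both reduce to the full-pair count via the identity $K=c+m(m-1)$ for strong orientations of cubic graphs, and both invoke Lemma~\ref{lifted_orientations} to obtain $4C+2n$ full pairs in $F$.

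Where you diverge is in proving the optimality of $F$. The paper's proof is extremely terse here: after computing $K(F)$ it simply writes ``The result now follows,'' leaning on the pairing of Lemmas~\ref{projected_orientations} and~\ref{lifted_orientations}. The natural reading is that for any orientation $F'$ of $I(G)$ with projection $D'$ having $C'$ full pairs, Lemma~\ref{projected_orientations} bounds the full pairs of $F'$ by $4C'+2n$, whence $K(F')\le 3n(3n-1)+4C'+2n$; comparing with $K(F)=3n(3n-1)+4C+2n$ then requires $C'\le C$. You correctly flag that this last inequality is not immediate when $D'$ is not strong, since optimality of $D$ gives only $K(D')=C'+n(n-1)-c'-2d'\le C+n(n-1)$, i.e.\ $C'-C\le c'+2d'$.

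Your fix---grading the nine cross-pairs between $T_x$ and $T_y$ according to $\theta_{D'}(x,y)\in\{0,1,2,3\}$ and using the contraction argument to show that the cross-pair contribution drops to at most $9$ or $0$ when $\theta_{D'}(x,y)\le 1$---is correct. The key implication $\kappa_{F'}(a,b)\ge k\Rightarrow\kappa_{D'}(x,y)\ge k$ for $a\in T_x$, $b\in T_y$ does follow from the triangle-contraction reasoning in the proof of Lemma~\ref{projected_orientations}, and it justifies all four cases of your table. The closing inequality $C'-C\le c'+2d'\le\tfrac{9}{4}c'+\tfrac{9}{2}d'$ then finishes cleanly. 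In short, your proposal supplies a genuinely sharper accounting than the simple full-pair comparison the paper appears to rely on, and it closes a step that the paper leaves to the reader.
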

\begin{proof}
\noindent Suppose $G$ has a  strong optimal orientation $D$. Then $\theta_D(u,v) \ge 2$ for all pairs $u,v$ of vertices of $D$. Also if $u,v$ is a full pair, then $\theta_D(u,v)-2 = 1$. Hence
\[
\sum_{\{u,v\} \subseteq V(G)}(\theta(u,v)-2)= K(D) -n(n-1)
\]
equals the number of full pairs in $D$. If $F$ is the orientation of $I(G)$ obtained by lifting $D$ to $I(G)$ and orienting each triangle cyclically, then, by Lemma \ref{lifted_orientations}, $F$ has $4(K(D)-n(n-1)) +2n$ full pairs. Thus
\[K(F) =3n(3n-1) + 4(K(D)-n(n-1)) +2n= 3n(3n-1) +4n(n-1)[\kb(D)-1]+2n.\]
 The result now follows.
 \end{proof}

It is readily seen that $K_4$ has a strong optimal orientation, and that $\KBM(K_4) = 4/3$.
Thus we obtain the following corollary to Theorem~\ref{LiftStrong}.

\begin{corollary}\label{inflation_kbm_goes_goes_to_1}
 $\displaystyle\lim_{k\rightarrow\infty}\kbm(I^k(K_4))=1.$
\end{corollary}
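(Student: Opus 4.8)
The plan is to iterate Theorem~\ref{LiftStrong}. The key observation is that the theorem requires its input graph to be a $3$-connected cubic graph possessing a strong optimal orientation, and it produces $I(G)$. So before iterating I must verify that the output $I(G)$ is again a $3$-connected cubic graph with a strong optimal orientation, so that the hypotheses are preserved under the map $G \mapsto I(G)$. First I would confirm that $I(G)$ is cubic: every vertex of $I(G)$ lies in exactly one triangle $T_v$ and has exactly one edge leaving that triangle (to the matching copy in a neighbouring triangle), giving degree $2+1=3$. Next, $I(K_4)$ is $3$-connected, and more generally the inflation of a $3$-connected cubic graph is $3$-connected; this should follow since a cut of fewer than three vertices in $I(G)$ would project to a cut of fewer than three vertices in $G$. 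Finally, Theorem~\ref{LiftStrong} guarantees that the lifting $F$ is a \emph{strong} optimal orientation of $I(G)$, so the ``strong optimal orientation'' hypothesis is automatically regenerated at each step. Thus the three hypotheses are closed under inflation, and since $K_4$ satisfies all of them with $\kbm(K_4)=4/3$, the sequence $I^k(K_4)$ is well-defined and Theorem~\ref{LiftStrong} applies at every stage.

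With the recursion in place, I would set up the arithmetic. Write $G_k = I^k(K_4)$, let $n_k = |V(G_k)|$, and let $a_k = \kbm(G_k)$. Since each inflation triples the number of vertices, $n_k = 3^k \cdot 4$, so $n_k \to \infty$. The recurrence from Theorem~\ref{LiftStrong} reads
\[
a_{k+1} = 1 + \frac{4 n_k(n_k-1)(a_k - 1) + 2 n_k}{3 n_k (3 n_k - 1)}.
\]
The natural move is to track the ``excess'' $e_k = a_k - 1 = \kbm(G_k) - 1 \ge 0$ rather than $a_k$ itself, since the claim is precisely that $e_k \to 0$. Rewriting the recurrence in terms of $e_k$ gives
\[
e_{k+1} = \frac{4 n_k (n_k - 1)}{3 n_k (3 n_k - 1)} \, e_k + \frac{2 n_k}{3 n_k (3 n_k - 1)},
\]
and I would bound the two pieces separately.

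The heart of the argument is that both terms on the right are small. For the coefficient of $e_k$, note $\frac{4 n_k(n_k-1)}{3 n_k(3 n_k - 1)} = \frac{4(n_k-1)}{3(3 n_k - 1)}$, which tends to $\frac{4}{9}$ as $n_k \to \infty$ and is bounded above by a constant strictly less than $1$ (say $\tfrac{1}{2}$) for all $k \ge 0$. For the additive term, $\frac{2 n_k}{3 n_k(3 n_k - 1)} = \frac{2}{3(3 n_k - 1)} \to 0$. Hence $e_{k+1} \le \tfrac{1}{2} e_k + \varepsilon_k$ where $\varepsilon_k \to 0$, and since $e_0 = \kbm(K_4)-1 = 1/3$ is finite, a routine induction (or the standard fact that $e_{k+1}\le c\,e_k + \varepsilon_k$ with $c<1$ and $\varepsilon_k\to 0$ forces $e_k \to 0$) yields $\lim_{k\to\infty} e_k = 0$, i.e.\ $\lim_{k\to\infty}\kbm(I^k(K_4)) = 1$.

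The main obstacle I anticipate is not the limit computation, which is elementary, but the structural verification that the inflation operation keeps us inside the class to which Theorem~\ref{LiftStrong} applies — in particular that $I(G)$ inherits $3$-connectivity from $G$ and that the strong optimal orientation produced at one stage genuinely serves as valid input at the next. Theorem~\ref{LiftStrong} already hands us strongness and optimality of $F$ for free, so the only genuine gap is the preservation of $3$-connectivity under inflation; I would handle this by arguing that any vertex cut of $I(G)$ of size at most $2$ projects to a vertex cut of $G$ of size at most $2$ (using that each triangle contracts to a single vertex and that separating triangles in $I(G)$ separates the corresponding vertices in $G$), contradicting the $3$-connectivity of $G$.
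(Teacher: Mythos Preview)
Your proposal is correct and follows the same approach as the paper, which simply notes that $K_4$ has a strong optimal orientation with $\kbm(K_4)=4/3$ and then iterates Theorem~\ref{LiftStrong}. You have filled in precisely the details the paper leaves implicit: that inflation preserves the hypotheses (cubic, $3$-connected, existence of a strong optimal orientation) and that the resulting recurrence for $\kbm(G_k)-1$ contracts to $0$.
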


\noindent
We conclude that the bound of Theorem~\ref{Robbins} is asymptotically sharp even for $3$-connected cubic graphs, and that the ratio $\kbm(G)/\kb(G)$ can be made arbitrarily close to $1/3$ for a $3$-connected cubic graph $G$.

Finally, we note that for every pair of distinct vertices $u,v$ in a cubic graph, a collection of $u$-$v$ paths is internally disjoint if and only if it is edge disjoint.  Hence, if $G$ is a cubic graph, then $\lbm(G)/\lb(G)=\kbm(G)/\kb(G)$.  Therefore, it follows from Corollary~\ref{inflation_kbm_goes_goes_to_1} that the bound of Theorem~\ref{EdgeLowerBound} is asymptotically sharp.

\section{Orientations of minimally 2-connected graphs}\label{Minimally2Connected}

It is natural to ask which graphs $G$ satisfy $\kappa(G)=\bar{\kappa}(G)=k$ for some positive integer $k$. It was observed in~\cite{BeinekeOellermannPippert2002} that graphs with this property are {\em minimally} $k$-{\em connected}, i.e., $\kappa(G) = k$ and $\kappa(G-e) < k$ for all edges $e$ of $G$.  The minimally $1$-connected graphs are precisely the  trees, whose average connectivity is $1$.  However, the average connectivity of minimally $k$-connected graphs, for $k \ge 2$, need not be $k$.  Indeed, it has been shown that if $G$ is a minimally $2$-connected graph, then $2\leq \kb(G)< 9/4,$ and these bounds are asymptotically tight~\cite{CasablancaMolOellermann2018}. In this section, we determine bounds on $\kbm(G)$ and $\kbm(G)/\kb(G)$ for every minimally $2$-connected graph $G$.

We begin by stating several known results about minimally $k$-connected graphs. An edge $e$ of a $k$-connected graph $G$ is called $k$-{\em essential} if $\kappa(G-e) < \kappa(G)$.  Thus, a minimally $k$-connected graph is one in which every edge is $k$-essential. Mader~\cite{Mader1972} established the following structure theorem for the $k$-essential edges in a $k$-connected graph.

\begin{theorem}{\em \cite{Mader1972}}
If $G$ is a $k$-connected graph, and $C$ is a cycle of $G$ induced by  $k$-essential edges, then some vertex of $C$ has degree $k$ in $G$.
\end{theorem}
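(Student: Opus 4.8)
The plan is to prove the contrapositive by an extremal argument on small vertex separators, using the submodularity of the vertex-boundary function. First I would record the two standard reductions. Since $G$ is $k$-connected, deleting a single edge lowers the connectivity by at most one, so a $k$-essential edge $e=xy$ satisfies $\kappa(G-e)=k-1$; a minimum separator of $G-e$ of size $k-1$ must split $x$ from $y$ (otherwise, since it has size $k-1<k$, it would already separate $G$), and adding back the appropriate endpoint shows that $G$ has a separator $S$ of size exactly $k$ with, say, $x$ in a fragment $F$ (a union of components of $G-S$) and $y\in S=N(F)$. Here $N(X)$ denotes the set of vertices outside $X$ with a neighbour in $X$. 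The key base observation is that if such a fragment is a single vertex, $F=\{x\}$, then $N(x)\subseteq S$, so $\deg_G(x)=k$; since $x$ lies on $C$, we are done. Thus I set up a proof by contradiction: assume every vertex of $C$ has degree at least $k+1$, so that for every edge of $C$ both fragments obtained this way have order at least $2$.

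Next I would invoke the extremal object and the uncrossing tool. Over all edges of $C$ and both associated fragments, choose a fragment $F$ of minimum order, arising from an edge $e=xy$ with $x\in F$ and $y\in N(F)$. The workhorse is the submodular inequality for the boundary function,
\[
|N(X\cap Y)|+|N(X\cup Y)|\le |N(X)|+|N(Y)|,
\]
together with the fact that any nonempty set $Z\neq V$ for which $V\setminus(Z\cup N(Z))$ is also nonempty satisfies $|N(Z)|\ge k$. Consequently, if $F'$ is another such fragment with $F\cap F'\neq\emptyset$ and $V\setminus(F\cup F'\cup N(F\cup F'))\neq\emptyset$, then both $N(F\cap F')$ and $N(F\cup F')$ are minimum separators, $F\cap F'$ is again a fragment, and minimality of $|F|$ forces $F\cap F'=F$, i.e.\ $F\subseteq F'$. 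This laminarity is what lets me compare the fragment $F$ with fragments coming from neighbouring edges of the cycle.

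Finally I would use the cyclic structure to drive $F$ down to a single vertex. The vertex $x\in F$ lies on two edges of $C$, namely $e=xy$ and a second essential edge $e'=xw$; applying the reduction to $e'$ produces a fragment $F'$ with $x\in F'$ and $w\in N(F')$. Since $x\in F\cap F'$, the uncrossing machinery applies and constrains how $F$ and $F'$ are nested; by also uncrossing $F$ against the \emph{opposite} fragments (those separating on the far sides of $e$ and $e'$) and propagating this comparison around $C$, I would argue that $F$ cannot properly contain any vertex other than $x$ without contradicting its minimal choice, forcing $F=\{x\}$ and hence $\deg_G(x)=k$, the desired contradiction. The main obstacle is precisely this last step: guaranteeing that the relevant \emph{corners} are nonempty so that submodularity actually yields minimum separators, and organising the local nesting relations consistently around the whole cycle so that some fragment is squeezed to a single vertex. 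This is where the hypothesis that the essential edges form a \emph{cycle} (rather than an arbitrary set) is indispensable, since it supplies, at every vertex, a second essential edge whose fragment can be uncrossed against the current one.
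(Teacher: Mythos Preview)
The paper does not prove this theorem; it is quoted from Mader and used only as a black box to obtain the structural corollary about minimally $k$-connected graphs. There is therefore no proof in the paper against which to compare your proposal.

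That said, your plan is exactly the standard route (essentially Mader's): attach to each $k$-essential edge a fragment bounded by a $k$-separator, choose one of minimum cardinality, and use the submodular inequality for $|N(\cdot)|$ to uncross it with the fragment coming from the neighbouring cycle edge at the same vertex, forcing it down to a singleton. Your reductions and the submodular inequality are stated correctly. What is missing is precisely what you flag: you have not verified that the required ``opposite corner'' is nonempty, so you do not yet know that $F\cap F'$ is a fragment with $|N(F\cap F')|\ge k$, and without that the minimality of $F$ gives nothing. You also overcomplicate the endgame: one does not need to ``propagate around the whole cycle''. The cycle hypothesis is used purely locally, to guarantee a second essential edge $xw$ incident with the vertex $x\in F$; the argument then terminates at that single vertex. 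Concretely, one works simultaneously with the co-fragment $\overline{F}=V\setminus(F\cup N(F))$ and with both fragments $F',\overline{F'}$ arising from $xw$; the positions of $y\in N(F)$ and $w$ relative to these four sets pin down which intersection is nonempty, and one of the two uncrossings (either $F$ against $F'$, or $F$ against $\overline{F'}$) produces a fragment strictly smaller than $F$ unless $F=\{x\}$. Until you carry out that case analysis, the proposal remains an outline rather than a proof.
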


The following structure result for minimally $k$-connected graphs follows from the above.

\begin{corollary} {\em \cite{Mader1972}} \label{mader}
Let $G$ be a minimally $k$-connected graph, and let $F$ the subgraph induced by the vertices of degree exceeding $k$ in $G$. Then $F$ is a forest with at least two components.
\end{corollary}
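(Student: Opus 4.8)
The plan is to derive both conclusions from the displayed theorem of Mader together with the defining feature of minimal $k$-connectivity: since $\kappa(G)=k$ and $\kappa(G-e)<k$ for \emph{every} edge $e$, every edge of $G$ is $k$-essential. Consequently every cycle of $G$ is a cycle all of whose edges are $k$-essential, so the theorem applies to \emph{every} cycle of $G$ and guarantees that each one contains a vertex of degree $k$ in $G$. This reformulation is the engine for both parts.

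The forest property is then immediate, and this is the part I would write out in full. Suppose, for contradiction, that $F$ contains a cycle $C$. By the definition of $F$, every vertex of $C$ has degree exceeding $k$ in $G$. But $C$ is also a cycle of $G$, hence a cycle of $k$-essential edges, so the theorem produces a vertex of $C$ of degree exactly $k$ in $G$, a contradiction. Thus $F$ has no cycle, i.e.\ $F$ is a forest. I would present exactly this one-line reduction, as it is the natural and direct use of the cited theorem.

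The substantive half is the claim that $F$ has at least two components, and this does \emph{not} follow from the acyclicity argument alone, since a forest may well be a single tree. The route I would take is a contradiction argument: assume $F$ is connected and nonempty, so that $G[V(F)]$ is a single tree $T$ while all remaining vertices have degree exactly $k$. I would then try to exhibit a non-$k$-essential edge incident with $T$ (contradicting minimality), using a separator argument of Halin--Mader type: because $G$ is $k$-connected, one can route $k$ internally disjoint paths between the endpoints of a suitable tree-edge through the degree-$k$ vertices, and clustering all the excess degree into one component forces one of these edges to be redundant. The degenerate case in which $T$ is a single vertex (all of whose neighbours then have degree $k$) should be ruled out separately, directly from $k$-connectivity.

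I expect the component count to be the main obstacle. The cycle theorem we are given forbids cycles inside $F$ but says nothing about how the high-degree vertices are distributed among separators, so turning the ``single tree'' hypothesis into an explicit non-essential edge requires a careful fan/separator analysis rather than a one-line appeal to the theorem; this is precisely where the weight of Mader's original argument lies, and it is the step I would budget the most effort for.
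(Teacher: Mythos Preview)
The paper does not actually prove this corollary. It simply inserts the sentence ``The following structure result for minimally $k$-connected graphs follows from the above'' and cites Mader's original paper~\cite{Mader1972}; no argument is given in the text. So there is no proof in the paper to compare your proposal against.

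Your derivation of the forest property is exactly the intended one-line reduction: in a minimally $k$-connected graph every edge is $k$-essential, so Mader's cycle theorem applies to every cycle of $G$, and hence no cycle can lie entirely inside $F$. This matches what the authors presumably have in mind when they say the corollary ``follows from the above.''

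You are also right that the ``at least two components'' assertion does \emph{not} drop out of the cycle theorem in the same immediate way, and you correctly flag this as the substantive half. The paper does not supply that argument either; it simply imports the full statement from~\cite{Mader1972}. Your proposed contradiction strategy (assume $F$ is a single tree and locate a non-$k$-essential edge via a separator/fan analysis) is along the right lines and is indeed where Mader's original work does the heavy lifting, but as you acknowledge it is only a sketch. Since the paper itself defers to Mader for this step, your proposal is at least as complete as what the paper provides, and your identification of where the real difficulty lies is accurate.
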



Minimally $2$-connected graphs were characterized independently by Dirac~\cite{Dirac1967} and Plummer~\cite{Plummer1968}. A cycle $C$ of a graph $G$ is said to have a {\em chord} if there is an edge of $G$ that joins a pair of non-adjacent vertices from $C$. Plummer characterized the minimally $2$-connected graphs as follows.

\begin{theorem}{\em \cite{Plummer1968}} \label{plummer}
A $2$-connected graph $G$ is minimally $2$-connected if and only if no cycle of $G$ has a chord.
\end{theorem}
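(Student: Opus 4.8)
The plan is to reduce both implications to a single local statement: for a $2$-connected graph $G$ on at least three vertices and an edge $e=xy$, the graph $G-e$ is again $2$-connected if and only if $e$ is a chord of some cycle of $G$. Granting this, the theorem is immediate. Indeed, $G$ is minimally $2$-connected exactly when every edge is $2$-essential, that is, when no edge $e$ leaves $G-e$ $2$-connected; by the local statement this is precisely the condition that no cycle of $G$ admits a chord. Throughout I use the characterization that a graph on at least three vertices is $2$-connected if and only if it is connected and has no cut vertex, together with the standard fact that in a $2$-connected graph any two vertices lie on a common cycle.

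The direction I expect to be the main obstacle is showing that if $e=xy$ is a chord of a cycle $C$, then $G-e$ has no cut vertex. Suppose toward a contradiction that $w$ is a cut vertex of $G-e$. Since $G$ is $2$-connected, $G-w$ is connected, and $(G-e)-w=(G-w)-e$; hence deleting $e$ disconnects the connected graph $G-w$, so $e$ is a bridge of $G-w$. In particular $w\notin\{x,y\}$, and $x,y$ lie in distinct components of $(G-e)-w$. But $C$ is a cycle through both $x$ and $y$ that avoids $e$, and removing the single vertex $w$ from $C$ leaves a connected subgraph --- all of $C$ if $w\notin V(C)$, and a path otherwise --- which still contains $x$ and $y$ because $w\neq x,y$. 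Thus $x$ and $y$ remain connected in $(G-e)-w$, contradicting that $e$ is a bridge of $G-w$. Hence $G-e$ has no cut vertex and is $2$-connected.

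For the reverse direction, assume $G-e$ is $2$-connected. Then $x$ and $y$ lie on a common cycle $C'$ of $G-e$. Since $G$ is simple, the unique edge of $G$ joining $x$ and $y$ is $e$ itself, which is not present in $C'$; consequently $x$ and $y$ are non-consecutive on $C'$, so $e$ is a chord of the cycle $C'$ of $G$. This establishes the local statement, and with it the theorem. The whole argument is elementary once the local statement is isolated; its only subtle point is the crux direction above, which hinges entirely on the observation that a cycle minus at most one vertex stays connected and continues to join the two endpoints of the chord.
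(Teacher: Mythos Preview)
The paper does not prove this theorem; it is quoted as a known result of Plummer and merely cited as background for Section~\ref{Minimally2Connected}. So there is no ``paper's own proof'' to compare against.

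Your argument is correct. The reduction to the local statement --- for an edge $e=xy$ of a $2$-connected graph $G$, the graph $G-e$ is $2$-connected if and only if $e$ is a chord of some cycle --- is clean, and both directions are handled properly. One tiny omission: in the direction where $e$ is a chord of a cycle $C$, you verify that $G-e$ has no cut vertex but do not explicitly check that $G-e$ is connected, which your stated characterization of $2$-connectedness requires. This is of course immediate (the cycle $C\subseteq G-e$ already joins $x$ and $y$, and $G$ is connected), but since you took the trouble to spell out the characterization you might add the one-line observation. Otherwise the proof is complete and self-contained.
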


\subsection{Bounds on \boldmath{$\kbm(G)$}}

Here, we establish upper and lower bounds on $\KBM(G)$, where $G$ is a minimally $2$-connected graph.  Our first lemma concerns optimal orientations of connected graphs in general.

\begin{lemma}\label{NoSourceToSink}
Let $D$ be an optimal orientation of a connected graph $G$ of order at least $3$.  Then no arc of $D$ is oriented from a source to a sink.
\end{lemma}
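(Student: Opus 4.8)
The plan is to argue by contradiction. Suppose $D$ is an optimal orientation and $(u,v)$ is an arc with $u$ a source (in-degree $0$) and $v$ a sink (out-degree $0$). I will produce a new orientation $D'$ of $G$ with $K(D') > K(D)$, contradicting optimality. The orientation $D'$ is obtained from $D$ by reversing the single arc $(u,v)$ to $(v,u)$, leaving all other arcs unchanged.

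The crucial first step is the observation that in $D$ the arc $(u,v)$ is traversed by exactly one directed path, namely the length-one path $u \to v$ itself. Indeed, in any directed path using $(u,v)$ the vertex $u$ is immediately followed by $v$; since $u$ has in-degree $0$ it cannot be preceded by anything, so $u$ is the first vertex of the path, and since $v$ has out-degree $0$ it cannot be followed by anything, so $v$ is the last vertex. Consequently, for every ordered pair $(x,y) \ne (u,v)$, no directed $x$--$y$ path uses the arc $(u,v)$, so $\kappa_D(x,y) = \kappa_{D-(u,v)}(x,y)$, where $D-(u,v)$ denotes $D$ with that arc deleted. For the pair $(u,v)$ itself, the direct arc is internally disjoint from every other $u$--$v$ path, giving $\kappa_D(u,v) = \kappa_{D-(u,v)}(u,v) + 1$, while $\kappa_D(v,u) = 0$ since $u$ is a source.

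Next I would compare connectivities in $D'$. Since $D' \supseteq D-(u,v)$, for every ordered pair $(x,y) \ne (u,v)$ we get $\kappa_{D'}(x,y) \ge \kappa_{D-(u,v)}(x,y) = \kappa_D(x,y)$, so no ordered pair other than $(u,v)$ loses connectivity. The ordered pair $(u,v)$ loses exactly $1$ (the reversed arc cannot lie on a $u$--$v$ path, as $v$ is the terminus), but the reversed arc gives $\kappa_{D'}(v,u) \ge 1 = \kappa_D(v,u) + 1$, so on the unordered pair $\{u,v\}$ the value $\theta$ does not decrease. Summing over all pairs yields $K(D') \ge K(D)$.

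To obtain the strict inequality I would exploit that $G$ is connected of order at least $3$, which forces at least one of $u,v$ to have degree at least $2$ (otherwise the edge $uv$ would be a component of order $2$). If $v$ has a neighbour $z \ne u$, then the arc $(z,v)$ survives in $D'$ and the path $z \to v \to u$ shows $\kappa_{D'}(z,u) \ge 1 > 0 = \kappa_D(z,u)$; symmetrically, if $u$ has a neighbour $w \ne v$, the path $v \to u \to w$ shows $\kappa_{D'}(v,w) \ge 1 > 0 = \kappa_D(v,w)$. In either case a pair distinct from $\{u,v\}$ strictly gains, so $K(D') > K(D)$, the desired contradiction. The main obstacle is controlling the effect of an arc reversal on all pairs simultaneously; this is exactly what the single-path observation of the second step resolves, reducing the global comparison to the two ordered pairs $(u,v)$ and $(v,u)$ together with one explicitly exhibited strictly-improving pair.
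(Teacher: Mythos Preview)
Your proof is correct and follows essentially the same approach as the paper's: reverse the offending arc, observe that the only directed path using it in $D$ is the length-one path $u\to v$ itself, deduce that no unordered pair loses $\theta$-value, and then exhibit a strictly improving pair via a third neighbour of $u$ or $v$ (whose existence is forced by connectedness and order $\ge 3$). Your version is somewhat more explicit in tracking the ordered-pair connectivities through the intermediate digraph $D-(u,v)$, but the idea is identical.
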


\begin{proof}
Suppose otherwise that $D$ contains the arc $(u,v)$, where $u$ is a source and $v$ is a sink.  Then the only oriented path in $D$ that contains the arc $(u,v)$ is the  path of length $1$ from $u$ to $v$.  Let $D'$ be the orientation obtained by reversing the arc $(u,v)$ to obtain $(v,u)$.   Note that $\theta_{D'}(u,v)=\theta_D(u,v)$, and that $\theta_{D'}(x,y)\geq \theta_D(x,y)$ for any other pair of vertices in $G$, since no path between $x$ and $y$ used the arc $(u,v)$.  Further, there is a vertex $w\neq v$ such that either $(u,w)$ or $(w,v)$ is an arc in $D$, and there is an oriented path from $v$ to $w$ (or $w$ to $u$, respectively) in $D'$, while there was none in $D$.  This gives $\kb(D')>\kb(D),$ a contradiction.
\end{proof}

We now establish a structure result for every minimally $2$-connected graph $G$ of a given order for which $\kbm(G)$  has largest possible value.

\begin{lemma}\label{NoAdjacentDegree2}
Let $G$ be a minimally $2$-connected graph of order $n\geq 5$ such that $\kbm(G)$ is largest.  Then no two vertices of degree $2$ are adjacent in $G$.
\end{lemma}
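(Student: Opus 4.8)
The plan is to argue by contradiction. Suppose $G$ is a minimally $2$-connected graph of order $n\ge 5$ for which $\kbm(G)$ is largest, yet $G$ contains adjacent vertices $u,v$ of degree $2$. Write $x$ for the second neighbour of $u$ and $y$ for the second neighbour of $v$. First I would record the structure forced by $2$-connectivity: since no single vertex can separate $G$, we must have $x\neq y$, and we may take $u,v$ to be the last two internal vertices of a maximal path $w_0,w_1,\dots,w_m,w_{m+1}$ all of whose internal vertices have degree $2$ and whose ends $w_0,w_{m+1}$ have degree at least $3$ (such ends exist because, by Corollary~\ref{mader}, the vertices of degree exceeding $2$ form a nonempty forest, so $G$ is not a cycle). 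The goal is then to build from $G$ another minimally $2$-connected graph $G'$ of the same order $n$ with $\kbm(G')>\kbm(G)$, contradicting the choice of $G$.

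To build $G'$, I would relocate one of the two stacked degree-$2$ vertices: suppress $u$ (delete $u$ and join its neighbours $x$ and $v$) and then subdivide a suitably chosen edge $e$ with one new vertex. Suppressing a degree-$2$ vertex and subdividing an edge each preserve minimal $2$-connectivity, which via Theorem~\ref{plummer} amounts to checking that no chord is created; the one simplicity issue, that $x$ and $v$ are not already adjacent, holds because $v$ has only the neighbours $u,y$ and $x\neq y$. Both operations leave the degree sequence of $G$ unchanged, so the potential, which depends only on the degree sequence (Observation~\ref{degree_upperbd}), satisfies $P(G')=P(G)$; the gain must therefore come not from a larger potential but from $G'$ realising more of that potential under orientation. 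I would choose $e$ incident with a vertex of degree at least $3$ (and, where the forest of Corollary~\ref{mader} permits, between two such vertices) so that the new degree-$2$ vertex is not inserted beside another degree-$2$ vertex.

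To compare the two graphs, I would start from an optimal orientation $D$ of $G$, so $\kb(D)=\kbm(G)$. By Lemma~\ref{NoSourceToSink}, no arc of $D$ runs from a source to a sink; in particular each of $u,v$ has one in-arc and one out-arc, which rules out the degenerate orientations of the ear and pins down the local picture around $x,u,v,y$. I would then transfer $D$ to an orientation $D'$ of $G'$ by inheriting all directions, orienting the contracted edge $xv$ in the direction the path $x,u,v$ carried, and orienting the subdivided edge $e$ as a directed pass-through. For every pair of surviving vertices, the transfer preserves each family of internally disjoint directed paths, since suppression and subdivision do not disturb directed connectivity between the other vertices, so no $\theta$-value decreases. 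The heart of the argument is to show that at least one $\theta$-value strictly increases, giving $K(D')>K(D)$ and hence $\kbm(G')\ge\kb(D')>\kbm(G)$.

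The main obstacle is exactly this last strict inequality. Because a strong orientation already makes every pair involving a degree-$2$ vertex full, the loss caused by two adjacent degree-$2$ vertices is invisible at those pairs; one must instead locate a pair of higher-degree vertices whose achievable $\theta$-value is strictly limited by the stacked ear and is recovered after relocation, or else exploit that the optimal $D$ need not be strong. Making this precise will require a careful Menger-type count of internally disjoint directed paths before and after the surgery. A secondary, more routine obstacle is the bookkeeping that keeps $G'$ simple and chord-free in every case, in particular when $x$ and $y$ are adjacent and when the vertices of degree exceeding $2$ form an edgeless forest, so that no edge joins two vertices of degree at least $3$ and the subdivided edge must be selected with more care.
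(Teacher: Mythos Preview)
Your plan has two concrete problems, and both stem from choosing the wrong modification of $G$.

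First, your use of Lemma~\ref{NoSourceToSink} is incorrect. That lemma only forbids an arc directed from a source to a sink; it does not force each of $u,v$ to have one in-arc and one out-arc. In an optimal orientation $D$ one of $u,v$ may well be a sink (or a source), provided its in-neighbours are not sources. The paper treats this as a genuine separate case.

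Second, and more seriously, the suppress-and-subdivide surgery gives you no handle on the strict inequality you yourself flag as the main obstacle. Moving a degree-$2$ vertex to an unrelated edge $e$ leaves the potential unchanged, as you note, but it also creates no new local structure between any specified pair of high-degree vertices; there is no evident pair whose $\theta$-value must go up, and for many choices of $e$ it will not. The paper avoids this entirely with a different, purely local modification: writing $u',v'$ for the other neighbours of $u,v$, delete the edge $uv$ and add the two edges $u'v$ and $uv'$. After checking $u'\ne v'$ and $u'v'\notin E(G)$ (via Theorem~\ref{plummer} and the fact that $G$ is not a cycle), this $G'$ is again minimally $2$-connected, and now $u'$ and $v'$ are joined by two internally disjoint $2$-paths through $u$ and through $v$. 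Orienting these four edges as a directed $4$-cycle $(u',u),(u,v'),(v',v),(v,u')$ immediately gives $\theta_{D'}(u',v')=\theta_D(u',v')+1$ while no other $\theta$-value drops; the source/sink case is handled by an equally short variant. So the strict increase is forced by the construction itself, and no Menger-type counting is needed. Your relocation idea does not supply any comparable mechanism, and I do not see how to complete it.
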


\begin{proof} Let $G$ be as in the lemma statement.
Since $K_{2, n-2}$ has an orientation for which the total connectivity exceeds $n(n-1)$, we know that $G$ is not a cycle.  Suppose, towards a contradiction, that $G$ has two adjacent vertices of degree $2$, say $u$ and $v$.  Let $u'$ be the other neighbour of $u$ and let $v'$ be the other neighbour of $v$.  Since $G$ is $2$-connected and $n \ge 5$, we have $u' \ne v'$.  Further, since $G$ is minimally $2$-connected and is not a cycle, one can argue that $u'v'\not\in E(G)$ by using Theorem~\ref{plummer}. Let $G'$ be the graph obtained by deleting the edge $uv$ and adding the edges $u'v$ and $uv'$.  Using the fact that $u'v'\not\in E(G)$, it is straightforward to show that $G'$ is minimally $2$-connected.  We claim that $\kbm(G')>\kbm(G)$.

Let $D$ be an optimal orientation of $G$.  Suppose first that the path $P:u'uvv'$ is oriented from $u'$ to $v'$ or from $v'$ to $u'$ in $D$.  Let $D'$ be the orientation of $G'$ obtained from $D$ by deleting the arcs incident to $u$ and $v$, and adding the arcs $(u',u),$ $(u,v'),$ $(v',v),$ and $(v,u')$.    By straightforward arguments, we have $\theta_{D'}(u',v')=\theta_D(u',v')+1$, and $\theta_{D'}(x,y)\geq \theta_{D}(x,y)$ for every other unordered pair of vertices $x,y$. It follows that $\kb(D')>\kb(D).$

On the other hand, if the vertices of $P$ do not induce an oriented path in $D$, then either $u$ or $v$ is a source or a sink in $D$.  Suppose $u$ is a sink; the other cases are similar.  Then by Lemma~\ref{NoSourceToSink}, $v$ is not a source.  So  $(u',u)$, $(v,u)$, and $(v',v)$ are arcs in $D$.  Let $D'$ be the orientation of $G'$ obtained from $D$ by deleting the arc $(v,u)$ and adding the arcs $(v',u)$ and $(v,u')$.  Once again, one can verify that $\theta_{D'}(u',v')=\theta_D(u',v')+1$, and that $\theta_{D'}(x,y)\geq \theta_{D}(x,y)$ for every other unordered pair of vertices $x,y$.  It follows that $\kb(D')>\kb(D).$
\end{proof}

We are now ready to bound the value of $\kbm(G)$ for any minimally $2$-connected graph $G$.

\begin{theorem}\label{general_kappa_bar_max_bounds}
Let $G$ be a minimally $2$-connected graph of order $n \ge 3$.  Then
\[
1\leq \kbm(G)\leq 1+\tfrac{(n-3)^2}{4n(n-1)} <\tfrac{5}{4}.
\]
\end{theorem}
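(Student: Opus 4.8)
The lower bound $\kbm(G)\geq 1$ is immediate: every minimally $2$-connected graph is $2$-connected, hence $2$-edge-connected, so Theorem~\ref{Robbins} applies. The content is therefore the upper bound, and the plan is to bound the total connectivity $K(D)$ of an optimal orientation $D$ via the potential $P(G)$ (Observation~\ref{degree_upperbd}), since $\kbm(G)=K(D)/[n(n-1)]$. By Lemma~\ref{NoAdjacentDegree2}, for the extremal graph (the one maximizing $\kbm$ over all minimally $2$-connected graphs of order $n$) no two degree-$2$ vertices are adjacent, and it suffices to bound $\kbm$ for this extremal graph. So I would let $G$ be extremal of order $n$ and analyze its degree sequence.

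The key structural input is Corollary~\ref{mader}: the vertices of degree exceeding $2$ induce a forest $F$ with at least two components. Write $n_2$ for the number of degree-$2$ vertices and $n_{\geq 3}=n-n_2$ for the rest. First I would use the non-adjacency of degree-$2$ vertices to control $n_2$: every degree-$2$ vertex has both neighbours of degree $\geq 3$, and a double-counting of edges between the two classes should force $n_2$ to be not too large relative to $n_{\geq 3}$ (roughly $n_2\leq n_{\geq 3}$ up to small corrections, since the $n_2$ vertices form an independent set each sending two edges into the $n_{\geq 3}$ vertices, which themselves induce only a forest). Combined with the forest bound $|E(F)|\leq n_{\geq 3}-2$, this pins down how the degrees can be distributed. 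The quantity to estimate is the potential
\[
P(G)=\sum_{\{u,v\}}\min\{\deg(u),\deg(v)\},
\]
and I would split it by the classes of the two endpoints: pairs with at least one degree-$2$ endpoint each contribute exactly $2$, so the only ``excess'' over the trivial bound $2\binom{n}{2}=n(n-1)$ comes from pairs with both endpoints of degree $\geq 3$. Writing $K(D)\leq P(G)=n(n-1)+[\,\text{excess}\,]$, the problem reduces to maximizing the excess subject to the forest and independence constraints.

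The main obstacle is the optimization of this excess term. The excess is $\sum_{\{u,v\}\subseteq V_{\geq 3}}(\min\{\deg u,\deg v\}-2)$, and I expect the extremal configuration to be forced by the forest constraint $\sum_{v\in V_{\geq3}}(\deg_F v)=2|E(F)|\leq 2(n_{\geq3}-2)$ together with the fact that each high-degree vertex's degree in $G$ exceeds its degree in $F$ only through edges to degree-$2$ vertices. I would aim to show the excess is maximized by a balanced configuration and is at most $\tfrac{(n-3)^2}{4}$, which upon dividing by $n(n-1)$ yields $\kbm(G)\leq 1+\tfrac{(n-3)^2}{4n(n-1)}$. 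The final strict inequality $1+\tfrac{(n-3)^2}{4n(n-1)}<\tfrac54$ is a routine algebraic check: it rearranges to $(n-3)^2<n(n-1)$, i.e. $9-6n<-n$, equivalently $9<5n$, which holds for all $n\geq 3$. The delicate part is justifying the factor-of-$4$ in the denominator, which should come from an AM–GM–type balancing of the two degree classes analogous to the ``at most $n^2/4$ full pairs'' counting used in the proof of Theorem~\ref{KBM_bd_odd_regular}.
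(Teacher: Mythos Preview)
Your plan is correct and matches the paper's proof essentially step for step: reduce to an extremal graph via Lemma~\ref{NoAdjacentDegree2}, use Corollary~\ref{mader} and the independence of the degree-$2$ vertices to get $\sum_{v\in V_{\geq 3}}\deg_G(v)\leq 2n-4$, bound $K(D)\leq P(G)=n(n-1)+\bigl[P(d_1,\dots,d_s)-2\binom{s}{2}\bigr]$, observe this last bracket is maximized when the $d_i$ are balanced, and then optimize the resulting quadratic in $s=n_{\geq 3}$ to obtain $(n-3)^2/4$. One small correction: your aside that the constraints force ``roughly $n_2\leq n_{\geq 3}$'' is neither true (consider $K_{2,m}$) nor needed---the paper simply lets $s$ range freely and maximizes $(n-s-2)(s-1)$ at $s=(n-1)/2$.
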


\begin{proof}
The lower bound follows immediately from Robbins' Theorem.  For the upper bound, let $G$ be a minimally $2$-connected graph of order $n\geq 3$ such that $\kbm(G)$ is largest.  Suppose that $G$ has $s$ vertices of degree at least $3$ and $n-s$ vertices of degree $2$.  Let $V_1=\{v_1,\dots,v_s\}$ be the set of vertices of degree at least $3$.   Let $d_i=\deg(v_i)$ for all $i\in\{1,\dots,s\}$.  From Corollary~\ref{mader}, $G[V_1]$ is a forest with at least two components.  So the subgraph induced by the vertices of degree at least $3$ has at most $s-2$ edges.   By Lemma~\ref{NoAdjacentDegree2}, every edge incident to a vertex of degree $2$ must also be incident to a vertex of degree at least $3$.  Thus we have
\[
\sum_{i=1}^s d_i=2(n-s)+2|E(G[V_1])|\leq 2(n-s)+2(s-2)=2n-4.
\]

Let $D$ be some orientation of $G$ and let $d$ and $0\leq r<s$ be the unique integers for which $2n-4=ds+r$.  By Observation \ref{degree_upperbd}, we have

\begin{align*}
K(D) \le P(G)& = 2\cdot\left[\tbinom{n}{2}-\tbinom{s}{2}\right]+P(d_1,d_2,\dots,d_s)\\
&\leq 2\cdot\left[\tbinom{n}{2}-\tbinom{s}{2}\right]+d\tbinom{s}{2}+\tbinom{r}{2}\\
&= 2\tbinom{n}{2}+(d-2)\tbinom{s}{2}+\tbinom{r}{2}\\
&=n(n-1)+\left[\tfrac{2n-4-r}{s}-2\right]\cdot \tbinom{s}{2}+\tbinom{r}{2}\\
&=n(n-1)+\left[\tfrac{2n-2s-4-r}{s}\right]\tbinom{s}{2}+\tbinom{r}{2}\\
&=n(n-1)+(n-s-2)(s-1)-\tfrac{r(s-1)}{2}+\tfrac{r(r-1)}{2}\\
&=n(n-1)+(n-s-2)(s-1)-\tfrac{r(s-r)}{2}\\
&\leq n(n-1)+(n-s-2)(s-1).
\end{align*}
By elementary calculus, this last expression is at most $n(n-1)+\tfrac{(n-3)^2}{4}$, with equality if and only if $s=\tfrac{n-1}{2}.$  Since $D$ is an arbitrary orientation of $G$, it follows that $\kbm(G)\leq 1+\tfrac{(n-3)^2}{4n(n-1)}<\tfrac{5}{4}.$
\end{proof}

The lower bound of Theorem~\ref{general_kappa_bar_max_bounds} is sharp if and only if $G$ is a cycle.  We believe that the upper bound of Theorem~\ref{general_kappa_bar_max_bounds} can be improved.  We will see later in Example~\ref{nine_sixteenths_example} that $\kbm(G)$ can be made arbitrarily close to $9/8$ for a minimally $2$-connected graph $G$; we know of several distinct families of minimally $2$-connected graphs which demonstrate this, but we do not know of any minimally $2$-connected graph $G$ with $\kbm(G)>9/8$.

\subsection{Bounds on the ratio \boldmath{$\kbm(G)/\kb(G)$}}
The following bounds follow from Theorem~\ref{general_kappa_bar_max_bounds} and from a bound on the average connectivity of a minimally $2$-connected graph given in~\cite{CasablancaMolOellermann2018}.

\begin{corollary}
Let $G$ be a minimally $2$-connected graph.  Then
\[
\tfrac{4}{9}< \frac{\kbm(G)}{\kb(G)}<\tfrac{5}{8}.
\]
\end{corollary}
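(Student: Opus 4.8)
The plan is to combine the two bounds directly, since the corollary is stated as an immediate consequence of Theorem~\ref{general_kappa_bar_max_bounds} together with the cited bound $2 \le \kb(G) < 9/4$ for minimally $2$-connected graphs from~\cite{CasablancaMolOellermann2018}. The quantity $\kbm(G)/\kb(G)$ is a ratio of two parameters, so to bound it from above I would pair the largest possible value of the numerator with the smallest possible value of the denominator, and to bound it from below I would pair the smallest possible numerator with the largest possible denominator.

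\textbf{Upper bound.} First I would use $\kbm(G) < \tfrac{5}{4}$ from Theorem~\ref{general_kappa_bar_max_bounds} together with $\kb(G) \ge 2$ from~\cite{CasablancaMolOellermann2018}. This gives
\[
\frac{\kbm(G)}{\kb(G)} < \frac{5/4}{2} = \frac{5}{8}.
\]
Since the upper bound on $\kbm$ is strict while $\kb(G) \ge 2$ holds, the resulting inequality is strict, matching the claimed $\tfrac{5}{8}$.

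\textbf{Lower bound.} Next I would use $\kbm(G) \ge 1$ (from Robbins' Theorem, as recorded in Theorem~\ref{general_kappa_bar_max_bounds}) together with the strict upper bound $\kb(G) < \tfrac{9}{4}$ from~\cite{CasablancaMolOellermann2018}. This gives
\[
\frac{\kbm(G)}{\kb(G)} > \frac{1}{9/4} = \frac{4}{9},
\]
again strict because $\kb(G) < 9/4$ is strict. Combining the two displays yields $\tfrac{4}{9} < \kbm(G)/\kb(G) < \tfrac{5}{8}$, which is exactly the corollary.

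Because this argument is a two-line substitution, there is essentially no obstacle in the proof itself; the only care required is tracking which inequalities are strict so that both endpoints come out strict as stated. The substance of the result lives entirely in Theorem~\ref{general_kappa_bar_max_bounds} and in the external bound on $\kb(G)$, both of which I am permitted to assume. I would simply remark that sharpness of these ratio bounds is not claimed here, consistent with the surrounding discussion noting that the extremal ratios $\tfrac{25}{54}$ and $\tfrac{9}{16}$ are only approached, not attained.
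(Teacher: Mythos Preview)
Your proof is correct and follows essentially the same approach as the paper: combine $\kbm(G)\ge 1$ with $\kb(G)<9/4$ for the lower bound, and $\kbm(G)<5/4$ with $\kb(G)\ge 2$ for the upper bound. The only cosmetic difference is that the paper justifies $\kb(G)\ge 2$ directly from $2$-connectedness rather than citing~\cite{CasablancaMolOellermann2018}, and cites Robbins' Theorem (Theorem~\ref{Robbins}) for $\kbm(G)\ge 1$ rather than going through Theorem~\ref{general_kappa_bar_max_bounds}.
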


\begin{proof}
The lower bound follows from the facts that $\kbm(G)\geq 1$ (by Theorem~\ref{Robbins}), and $\kb(G)<\tfrac{9}{4}$ (by \cite[Theorem~2.11]{CasablancaMolOellermann2018}).  The upper bound follows from the facts that $\kbm(G)<\tfrac{5}{4}$ (by Theorem~\ref{general_kappa_bar_max_bounds}), and $\kb(G)\geq 2$ since $G$ is $2$-connected.
\end{proof}

We do not know whether these bounds are sharp.  In the remainder of this section, we describe constructions of minimally $2$-connected graphs $G$ with ratio $\kbm(G)/\kb(G)$ arbitrarily close to $\tfrac{25}{54}$ (which is $\tfrac{1}{54}$ greater than the lower bound), and minimally $2$-connected graphs with ratio $\kbm(G)/\kb(G)$ arbitrarily close to $\tfrac{9}{16}$ (which is $\tfrac{1}{16}$ less than the upper bound).

We begin with two short lemmas on subdivisions of graphs.  For a graph $G$, we let $S(G)$ denote the subdivision of $G$, obtained from $G$ by subdividing every edge, i.e., replacing every edge in $G$ with a path of length $2$ joining its ends.  Formally, we define $S(G)$ to have vertex set $V(G)\cup E(G)$, and we join $u\in V(G)$ and $e\in E(G)$ by an edge whenever $u$ is an endvertex of $e$. Note if $G$ is $2$-connected, then $S(G)$ is minimally $2$-connected.


\begin{lemma}\label{KSGLemma}
For any graph $G$ of order $n$ and size $m$,
\[
K(S(G))\leq 2\left[\tbinom{n+m}{2}-\tbinom{n}{2}\right]+K(G),
\]
with equality if and only if $G$ is $2$-connected.
\end{lemma}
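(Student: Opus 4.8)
The plan is to compute the right-hand side directly and then bound $K(S(G))$ from above using the potential, with equality analysis coming from when the potential bound is tight. First I would record the degree sequence of $S(G)$. Each original vertex $v \in V(G)$ retains its degree $\deg_G(v)$, while each subdivision vertex $w_e$ (corresponding to an edge $e \in E(G)$) has degree exactly $2$. Since $S(G)$ has $n+m$ vertices, the pairs split into three types: original--original, original--subdivision, and subdivision--subdivision. The key observation is that any pair involving a subdivision vertex has connectivity at most $2$ (since $\min$ with a degree-$2$ vertex is at most $2$), which is why the term $2[\binom{n+m}{2}-\binom{n}{2}]$ appears: it accounts for all $\binom{n+m}{2}-\binom{n}{2}$ pairs that include at least one subdivision vertex, each contributing at most $2$ to the potential.

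The main computational step is to show $\kappa_{S(G)}(x,y) = \kappa_G(u,v)$ whenever $x=u$ and $y=v$ are both original vertices, where $u,v \in V(G)$ correspond to $x,y$. This is because internally disjoint $u$--$v$ paths in $G$ correspond bijectively to internally disjoint $x$--$y$ paths in $S(G)$: subdividing edges does not change the number of internally disjoint paths between two original vertices, as each path simply gets its edges replaced by length-$2$ segments through distinct subdivision vertices. Hence the contribution from original--original pairs is exactly $K(G)$. For the remaining pairs, I would apply Observation~\ref{degree_upperbd} (the potential bound), noting that each pair with a degree-$2$ subdivision vertex contributes at most $2$, giving the upper bound
\[
K(S(G)) \le 2\left[\tbinom{n+m}{2}-\tbinom{n}{2}\right] + K(G).
\]

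For the equality condition, I would argue that equality holds if and only if every pair involving a subdivision vertex actually achieves connectivity $2$. The forward direction: if $G$ is $2$-connected, then $S(G)$ is $2$-connected (as noted before the lemma), so $\kappa_{S(G)}(x,y) \ge 2$ for every pair, and combined with the degree-$2$ upper bound this forces equality $\kappa_{S(G)}(x,y)=2$ for all pairs involving a subdivision vertex. For the converse: if $G$ is not $2$-connected, then either $G$ is disconnected or has a cut vertex, and in either case some pair of vertices has connectivity at most $1$ in $G$, hence in $S(G)$ — so some subdivision vertex fails to reach its $\theta$-potential of $2$, making the inequality strict.

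I expect the main obstacle to be the equality analysis rather than the inequality itself. The inequality is a routine application of the potential bound plus the path-correspondence argument. The delicate point is carefully handling the case where $G$ is connected but not $2$-connected: I must verify that the presence of a cut vertex (or a degree-$1$ vertex) genuinely forces some subdivision-vertex pair below connectivity $2$, and conversely that $2$-connectivity of $G$ suffices to push \emph{every} such pair up to exactly $2$. One subtlety to watch is pairs consisting of two subdivision vertices $w_e, w_f$: I should confirm that $2$-connectivity of $S(G)$ indeed yields $\kappa_{S(G)}(w_e,w_f)=2$, which follows since both have degree $2$ (upper bound) and $S(G)$ is $2$-connected (lower bound), so no separate argument is needed beyond invoking the global connectivity of $S(G)$.
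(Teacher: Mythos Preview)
Your approach is correct and essentially identical to the paper's: bound pairs involving a subdivision vertex by $2$ via the degree constraint, and identify the contribution of original--original pairs with $K(G)$ via the obvious bijection of internally disjoint paths. One small slip worth tightening: in your converse for the equality case, you deduce that some \emph{original} pair has connectivity at most $1$ and then jump to ``some subdivision vertex fails to reach its potential of $2$,'' but a deficit in an original--original pair is already absorbed into $K(G)$ and does not by itself make the displayed inequality strict; what you actually need is that if $G$ is not $2$-connected (and has an edge) then $S(G)$ is not $2$-connected, so some pair \emph{containing a subdivision vertex} has connectivity below $2$. The paper's own proof is equally terse on this point, simply asserting the ``if and only if'' without elaboration.
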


\begin{proof}
Let $u$ and $v$ be distinct vertices of $S(G)$.  If either $u$ or $v$ is in $E(G)$, then $\kappa_{S(G)}(u,v)\leq 2$, with equality for all such pairs if and only if $G$ is $2$-connected.  Suppose otherwise that $u,v\in V(G)$.  Any collection of $k$ internally disjoint $u$--$v$ paths in $S(G)$ corresponds to a collection of $k$ internally disjoint $u$--$v$ paths in $G$ in an obvious manner; so $\kappa_{S(G)}(u,v)=\kappa_G(u,v).$  The statement now follows by summing the connectivities between all pairs of vertices.
\end{proof}

\begin{lemma}\label{KmSGLemma}
Let $G$ be a graph of order $n$ and size $m$.  Then
\[
\Km(S(G))\leq 2\left[\tbinom{n+m}{2}-\tbinom{n}{2}\right]+\Km(G),
\]
with equality if there is an optimal orientation of $G$ that is strong.
\end{lemma}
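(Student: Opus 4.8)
The plan is to prove both the inequality and the equality condition by relating orientations of $S(G)$ to orientations of $G$ through contraction (projection) and lifting of subdivision paths, in the same spirit as Lemma~\ref{KSGLemma}. Recall $S(G)$ has order $n+m$, with the $m$ subdivision vertices (those in $E(G)$) each of degree $2$.

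For the upper bound, I would fix an arbitrary orientation $F$ of $S(G)$ and split the unordered pairs of vertices into those involving at least one subdivision vertex and those consisting of two original vertices of $V(G)$. Since a subdivision vertex $w$ has $\od(w)+\id(w)=2$, we get $\theta_F(u,v)\le 2$ whenever $u$ or $v$ is a subdivision vertex; there are $\binom{n+m}{2}-\binom{n}{2}$ such pairs, contributing at most $2\bigl[\binom{n+m}{2}-\binom{n}{2}\bigr]$. For the pairs of original vertices, the key step is to produce from $F$ an orientation $D$ of $G$ with $\kappa_F(u,v)\le\kappa_D(u,v)$ for all $u,v\in V(G)$. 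I would obtain $D$ by projecting $F$: for each edge $e=xy$ of $G$ whose subdivision vertex is traversed transitively in $F$ (one arc in, one arc out), orient $e$ in the direction of traversal; for each subdivision vertex that is a source or sink in $F$, orient the corresponding edge of $G$ arbitrarily. Then any family of internally disjoint directed $u$--$v$ paths in $F$ contracts to a family of internally disjoint directed $u$--$v$ paths in $D$ (distinct internal original vertices and distinct subdivision vertices yield distinct edges of $G$). Hence $\sum_{\{u,v\}\subseteq V(G)}\theta_F(u,v)\le K(D)\le\Km(G)$, and adding the two parts gives the bound since $F$ was arbitrary.

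For the equality statement, suppose $D$ is a strong optimal orientation of $G$, and let $F$ be its lifting: orient the subdivision path $x$--$e$--$y$ as $x\to e\to y$ whenever $e=xy$ is oriented $x\to y$ in $D$. I would first verify that $F$ is strong, using that $D$ is strong and that each subdivision vertex lies on the directed path realizing its edge; thus $\kappa_F(u,v)\ge 1$ and $\kappa_F(v,u)\ge 1$ for every pair. Since each subdivision vertex has in-degree and out-degree exactly $1$ in $F$, every pair involving a subdivision vertex then satisfies $\theta_F(u,v)=2$. For pairs of original vertices, lifting a maximum family of internally disjoint directed $u$--$v$ paths in $D$ gives such a family in $F$, so $\kappa_F(u,v)\ge\kappa_D(u,v)$; the projection argument above supplies the reverse inequality, whence $\theta_F(u,v)=\theta_D(u,v)$ and these pairs contribute exactly $K(D)=\Km(G)$. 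Summing, $K(F)=2\bigl[\binom{n+m}{2}-\binom{n}{2}\bigr]+\Km(G)$, which matches the upper bound and forces equality.

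I expect the main obstacle to be the projection step in the upper bound, specifically the treatment of subdivision vertices that are sources or sinks in $F$. These invalidate the naive intuition that $F$ is the lifting of some orientation of $G$, so one must argue carefully that a source (in-degree $0$) or a sink (out-degree $0$) can never be an \emph{internal} vertex of a directed path between two original vertices. Given that, only transitive subdivision vertices are ever used, the contraction is consistent with $D$, and the resulting $u$--$v$ paths remain internally disjoint. Everything else---the degree-$2$ bound, the lifting construction, and the strong-connectivity check---is routine.
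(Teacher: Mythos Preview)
Your proposal is correct and follows essentially the same approach as the paper: bound pairs involving a subdivision vertex by $2$ via degree considerations, project $F$ onto an orientation of $G$ to handle pairs of original vertices, and for the equality case lift a strong optimal orientation of $G$. The only cosmetic difference is that the paper \emph{deletes} the edges of $G$ whose subdivision vertices are sources or sinks in $F$ (obtaining a partial orientation $D$ and asserting $\kappa_{D_S}(u,v)=\kappa_D(u,v)$), whereas you orient those edges arbitrarily and only claim the inequality $\kappa_F(u,v)\le\kappa_D(u,v)$; both are valid since such subdivision vertices can never be internal to a directed path, and your weaker inequality is all that is needed for the upper bound.
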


\begin{proof}
Let $D_S$ be any orientation of $S(G)$.  Let $u$ and $v$ be distinct vertices of $S(G)$.   If either $u$ or $v$ is in $E(G)$, then $\theta_{D_S}(u,v)\leq 2$, with equality for all such pairs if $D_S$ is strong.  Suppose otherwise that $u,v\in V(G)$.  Consider the partial orientation $D$ of $G$ obtained from $D_S$ as follows.  For every edge $e\in E(G)$, say $e=uv$, orient $e$ from $u$ to $v$ if and only if both of the arcs $(u,e)$ and $(e,v)$ appear in $D_S$.  Remove all edges of $G$ that are not given an orientation in this manner (namely those edges of $G$ that are sources or sinks as vertices in $D_S$).  Then $\kappa_{D_S}(u,v)=\kappa_D(u,v)$.  Thus, we have
\[
\sum_{u,v\in V(G)}\kappa_{D_S}(u,v)=\sum_{u,v\in V(G)}\kappa_D(u,v)\leq \Km(G),
\]
with equality if and only if $D$ is an optimal orientation of $G$ (in particular, every edge of $G$ must be in $D$).  Altogether, we have
\[
\Km(S(G))\leq 2\left[\tbinom{n+m}{2}-\tbinom{n}{2}\right]+\Km(G),
\]
with equality if $D$ is a strong optimal orientation of $G$.
\end{proof}

\begin{example}\label{Ex25-54}
Let $G_k=I^k(K_4)$, with notation as in Section~\ref{CubicSection}.  The subdivision $S(G_k)$ of $G_k$ is minimally $2$-connected (since $G_k$ is $2$-connected), and we show that
\[
\lim_{k\rightarrow\infty}\frac{\kbm(S(G_k))}{\kb(S(G_k))}=\frac{25}{54}.
\]
For ease of notation, let $n=4\cdot 3^k$ (the order of $G_k$), let $m=\tfrac{3n}{2}$ (the size of $G_k$), and let $N=\binom{n+m}{2}-\binom{n}{2}.$  By a straightforward computation, we have
\begin{align}\label{AuxLimit}
\lim_{k\rightarrow\infty}\tfrac{N}{n(n-1)}=\tfrac{21}{4}.
\end{align}
By Theorem~\ref{LiftStrong}, we know that $G_k$ has a strong optimal orientation.  Hence, by Lemma~\ref{KSGLemma} and Lemma~\ref{KmSGLemma}, we have
\begin{align}\label{eq25-54}
\frac{\kbm(S(G_k))}{\kb(S(G_k))}&=\frac{\Km(S(G_k))}{2K(S(G_k))}=\frac{2N+\Km(G_k)}{2\left[2N+K(G_k)\right]}=\frac{\frac{2N}{n(n-1)}+\kbm(G_k)}{\frac{4N}{n(n-1)}+\kb(G_k)}.
\end{align}
Now using (\ref{AuxLimit}), (\ref{eq25-54}), Corollary~\ref{LiftStrong}, and the fact that $\kb(G_k)=3$, we obtain
\[
\lim_{k\rightarrow\infty}\frac{\kbm(S(G_k))}{\kb(S(G_k))}=\frac{\frac{21}{4}+1}{\frac{21}{2}+3}=\frac{25}{54}. \qedhere
\]
\end{example}

\begin{example}\label{nine_sixteenths_example}
Let $n$ be a positive integer, and define $H_{4n+1}$ as follows (see Figure~\ref{Family1}).  Let  $P:v_1v_2 \ldots v_{2n}$ be a path of order $2n$. Subdivide each edge $v_iv_{i+1}$ of $P$ for $1 \le i < 2n$, and call the new vertex $w_i$. Now add a vertex $w_{0}$ and join it to $v_1$ and $v_2$, and add a vertex $w_{2n}$ and join it to $v_{2n-1}$ and $v_{2n}$. To complete the construction of $H_{4n+1}$, add the edges $v_{i}v_{i+2}$ for $1 \le i <2n-1$.  By inspection, $H_{4n+1}$ is minimally $2$-connected, and we now show that
\[
\lim_{k\rightarrow\infty}\frac{\kbm(H_{4n+1})}{\kb(H_{4n+1}}\geq \frac{9}{16}.
\]

Apart from the pairs of vertices $v_i, v_{i+1}$ for $1 \le i <2n$, the connectivity between any pair of vertices $u,v$ in $H_{4n+1}$ is $2$. So the total connectivity of $H_{4n+1}$ is
\[K(H_{4n+1}) = 2\binom{4n+1}{2} +2n-1.\]
Hence, $\lim_{n\rightarrow\infty}\kb(H_{4n+1})=2$.

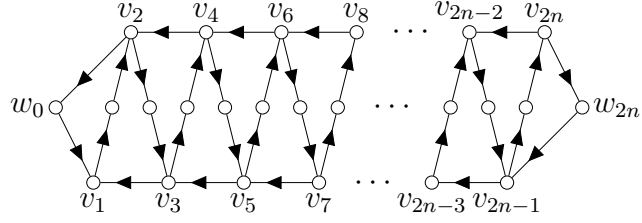
\begin{figure}[htb]
\centering{
\begin{tikzpicture}
\tikzstyle{every path}=[color =black, line width = 0.3 pt, > = triangle 45]
\pgfmathsetmacro{\n}{4}
\pgfmathtruncatemacro{\m}{\n-1}
\foreach \x in {0,...,\m}
{
\pgfmathtruncatemacro{\xl}{2*\x+1}
\pgfmathtruncatemacro{\al}{2*\x+2}
\hollowvertex (\x) at (\x,0) {};
\node[below] at (\x,0){$v_{\xl}$};
\pgfmathtruncatemacro{\a}{\n+\x}
\hollowvertex (\a) at (\x+0.5,2) {};
\node[above] at (\x+0.5,2){$v_{\al}$};
\pgfmathtruncatemacro{\b}{2*\n+\x}
\hollowvertex(\b) at (\x+0.25,1) {};
}
\pgfmathtruncatemacro{\l}{\n-2}
\foreach \x in {0,...,\l}
{
\pgfmathtruncatemacro{\c}{3*\n+\x}
\hollowvertex (\c) at (0.75+\x,1) {};
}
\foreach \x in {0,...,\m}
{
\pgfmathtruncatemacro{\a}{\n+\x}
\pgfmathtruncatemacro{\b}{2*\n+\x}
\path
(\x) edge[->-=0.75] (\b)
(\b) edge[->-=0.75] (\a)
;
}
\foreach \x in {0,...,\l}
{
\pgfmathtruncatemacro{\a}{\n+\x}
\pgfmathtruncatemacro{\b}{3*\n+\x}
\pgfmathtruncatemacro{\c}{1+\x}
\pgfmathtruncatemacro{\d}{1+\a}
\path
(\a) edge[->-=0.75] (\b)
(\b) edge[->-=0.75] (\c)
(\c) edge[->-=0.75] (\x)
(\d) edge[->-=0.75] (\a)
;
}

\node at (3.75,0) {$\cdots$};
\node at (4,1) {$\cdots$};
\node at (4.25,2) {$\cdots$};

\hollowvertex (a1) at (4.5,0) {};
\node[below] at (4.5,0) {$v_{2n-3}$};
\hollowvertex (a2) at (4.75,1) {};
\hollowvertex (a3) at (5,2) {};
\node[above] at (5,2) {$v_{2n-2}$};
\hollowvertex (a4) at (5.25,1) {};
\hollowvertex (a5) at (5.5,0) {};
\node[below] at (5.5,0) {$v_{2n-1}$};
\hollowvertex (a6) at (5.75,1) {};
\hollowvertex (a7) at (6,2) {};
\node[above] at (6,2) {$v_{2n}$};
\path
(a1) edge[->-=0.75] (a2)
(a2) edge[->-=0.75] (a3)
(a3) edge[->-=0.75] (a4)
(a4) edge[->-=0.75] (a5)
(a5) edge[->-=0.75] (a6)
(a6) edge[->-=0.75](a7)
(a5) edge[->-=0.75] (a1)
(a7) edge[->-=0.75] (a3);

\hollowvertex (first) at (-0.5,1) {};
\node[left] at (-0.5,1){$w_0$};
\hollowvertex (last) at (6.5,1) {};
\node[right] at (6.5,1){$w_{2n}$};
\path
(\n) edge[->-=0.75] (first)
(first) edge[->-=0.75] (0)
(a7) edge[->-=0.75] (last)
(last) edge[->-=0.75] (a5);
\end{tikzpicture}
}
\caption{A family of minimally $2$-connected graphs with ratio tending to $\tfrac{9}{16}$}\label{Family1}
\end{figure}

We now describe an orientation $D_{4n+1}$ of $H_{4n+1}$ (see Figure~\ref{Family1}). For every $1 \le i < 2n$, orient the path $v_iw_{i}v_{i+1}$ from $v_i$ to $v_{i+1}$. Orient the path $v_2w_{0}v_1$ from $v_2$ to $v_1$, and orient the path $v_{2n}w_{2n}v_{2n-1}$ from $v_{2n}$ to $v_{2n-1}$. Orient the path $v_{2n}v_{2n-2} \ldots v_2$ from $v_{2n}$ to $v_2$, and the path $v_{2n-1}v_{2n-3} \ldots v_1$ from $v_{2n-1}$ to $v_1$.  One can verify that $D_{4n+1}$ is strong.  Note that $H_{4n+1}$ has $2n$ vertices of degree at least $3$ (the $v_i$'s) and $2n+1$ vertices of degree $2$ (the $w_i$'s).  Note that for any pair $v_i$, $v_j$ with $i<j$, we have $\kappa_{D_{4n+1}}(v_i,v_j)=1$ and $\kappa_{D_{4n+1}}(v_j,v_i)=2$, hence $\theta_{D_{4n+1}}(v_i,v_j)=3$. Therefore, the total connectivity of $D_{4n+3}$ is given by
\[K(D_{4n+1})=2\binom{4n+1}{2}+ \binom{2n}{2}.\]
So $\lim_{n\rightarrow\infty}\kb(D_{4n+1})=\tfrac{9}{8}$, and we certainly have $\kbm(H_{4n+1})\geq \kb(D_{4n+1})$ for all $n$.  It follows that
\[
\lim_{n\rightarrow\infty}\frac{\kbm(H_{4n+1})}{\kb(H_{4n+1})}\geq\lim_{n\rightarrow\infty}\frac{\kb(D_{4n+1})}{\kb(H_{4n+1})}=\frac{9}{16}. \qedhere
\]
\end{example}

\section{Maximal outerplanar graphs and 2-trees}\label{MOPs}

A graph is {\em outerplanar} if it can be drawn in the plane so that no two of its edges cross and all of its vertices appear on the boundary of the outer face.
A graph is {\em maximal outerplanar} if it is outerplanar and the addition of any edge destroys this property.  It is known that every maximal outerplanar graph is a \emph{$2$-tree}.  For $k \ge 1$, the $k$-{\em trees} are defined recursively as follows: the complete graph $K_k$ is a $k$-tree, and if $T$ is a $k$-tree, then the graph obtained from $T$ by adding a new vertex and joining it to every vertex in a $k$-clique of $T$ is a $k$-tree.  Note that trees are precisely the $1$-trees.

It was shown in \cite{DankelmannOellermann2003} that all maximal outerplanar graphs of the same order have the same average connectivity.

\begin{theorem} {\em \cite{DankelmannOellermann2003}}
If $G$ is a maximal outerplanar graph of order $n$, then \[\kb(G)=2+\frac{2n-6}{n(n-1)}.\]
\end{theorem}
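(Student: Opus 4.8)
The plan is to compute the total connectivity $K(G) = \binom{n}{2}\kb(G)$ by summing $\kappa_G(u,v)$ over all unordered pairs, and then divide by $\binom{n}{2}$. The target formula $\kb(G) = 2 + \frac{2n-6}{n(n-1)}$ is equivalent to
\[
K(G) = 2\tbinom{n}{2} + (2n-6) = n(n-1) + 2n - 6 = n^2 + n - 6.
\]
So the combinatorial goal is to prove that the sum of pairwise connectivities in any maximal outerplanar graph of order $n$ equals $n^2 + n - 6$. Since the statement asserts this value is the same for \emph{all} maximal outerplanar graphs of order $n$, I expect an inductive argument on $n$ using the recursive $2$-tree structure, where adding a vertex increases $K$ by a fixed amount independent of which triangle it is attached to.

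\textbf{The key structural facts I would use.} First, a maximal outerplanar graph $G$ on $n \ge 3$ vertices is $2$-connected, so $\kappa_G(u,v) \ge 2$ for every pair. Second, since $G$ is outerplanar it contains no $K_4$ minor, which caps connectivities: in fact $\kappa_G(u,v) \in \{2,3\}$ for all but the lowest-degree cases, and one shows $\kappa_G(u,v) = 3$ exactly when $u,v$ are adjacent and both have degree at least $3$ (equivalently, the edge $uv$ lies in two triangles). This is the crucial counting observation: $K(G) = 2\binom{n}{2} + (\text{number of pairs with }\kappa = 3)$, and the extra term $2n - 6$ must count precisely the edges lying in two triangles. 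So the whole problem reduces to showing that every maximal outerplanar graph on $n$ vertices has exactly $n - 3$ edges (the ``internal'' edges, i.e. diagonals of the triangulation) that belong to two triangles.

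\textbf{Carrying it out.} A maximal outerplanar graph is a triangulation of a polygon: it has $n$ boundary edges (each in exactly one triangle) and $n - 3$ internal diagonals (each in exactly two triangles), giving $2n - 3$ edges and $n - 2$ triangular faces. I would verify that for a boundary edge $uv$ we have $\kappa_G(u,v) = 2$ (removing the two vertices of the unique triangle on $uv$, or more carefully using Menger with a $2$-cut), while for an internal diagonal $uv$ bounding two triangles we have three internally disjoint $u$--$v$ paths (the edge itself, plus one through each of the two apexes), and no more since $G$ is $K_4$-minor-free. For non-adjacent $u,v$, outerplanarity forces a $2$-vertex cut, so $\kappa_G(u,v) = 2$. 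Thus
\[
K(G) = 2\left[\tbinom{n}{2} - (2n-3)\right] + 2(2n-3) + 3(n-3) \cdot \tfrac{?}{}
\]
— more cleanly, $K(G) = 2\binom{n}{2} + \#\{\text{internal edges}\} = 2\binom{n}{2} + (n-3)$; but this gives $2n - 6$ only after accounting correctly, so I would instead write $K(G) = 2\binom{n}{2} + (2n-6)$ directly by noting each of the $n-3$ internal diagonals contributes one extra unit of connectivity beyond $2$, and rechecking the boundary-edge contributions, being careful that the arithmetic matches $2n-6 = 2(n-3)$, which signals I should recount whether each internal diagonal contributes $+1$ or whether both endpoints' high degree causes double counting.

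\textbf{The main obstacle.} The delicate point is pinning down \emph{exactly} which pairs achieve $\kappa = 3$ and confirming the count gives $2n - 6$ rather than $n - 3$; the factor-of-two discrepancy means my first guess (one extra unit per internal diagonal) undercounts, so the real work is a careful case analysis showing that the pairs with $\kappa_G(u,v)=3$ number exactly $2n-6$. I expect the cleanest route is induction: verify the base case $K(K_3)=6=3^2+3-6$, and show that attaching a new vertex $x$ to an edge $ab$ of a triangle raises $K$ by exactly $2(n-1)-? $ computed so that the recurrence matches $n^2+n-6$, namely an increase of $2n$ (going from $(n-1)^2+(n-1)-6$ to $n^2+n-6$ means $\Delta K = 2n$). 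Establishing that $\Delta K = 2n$ regardless of the chosen triangle — by analyzing the connectivities $\kappa_G(x,v)$ for all old vertices $v$ and the change $ab$ undergoes (from an internal diagonal or boundary edge) — is the technical heart, and is where I would spend the effort.
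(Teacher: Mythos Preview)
The paper does not prove this theorem; it is quoted from \cite{DankelmannOellermann2003} and only remarked later (for the broader class of $2$-trees) that the proof is ``by induction.'' So there is no paper proof to compare against, and your approach is reasonable in spirit. However, your write-up contains a genuine arithmetic slip that derails the rest of the argument.

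You convert the target $\kb(G)=2+\frac{2n-6}{n(n-1)}$ into $K(G)=2\binom{n}{2}+(2n-6)=n^2+n-6$. This is wrong: since $K(G)=\binom{n}{2}\kb(G)$ and $n(n-1)=2\binom{n}{2}$, the extra term is $\binom{n}{2}\cdot\frac{2n-6}{n(n-1)}=\frac{2n-6}{2}=n-3$, so in fact
\[
K(G)=n(n-1)+(n-3)=n^2-3.
\]
Your ``factor-of-two discrepancy'' is an artefact of this error, not a real phenomenon. With the correct target, your first instinct is exactly right: the $n-3$ chords each have $\kappa_G(u,v)=3$, and every other pair (the $n$ outer edges and all nonadjacent pairs) has $\kappa_G(u,v)=2$, giving
\[
K(G)=2\tbinom{n}{2}+(n-3)=n^2-3.
\]
Likewise, the inductive step should read $\Delta K=2n-1$ (not $2n$): attaching a new vertex $x$ to a boundary edge $ab$ creates $n-1$ new pairs all with $\kappa=2$ (since $\deg(x)=2$), contributing $2(n-1)$, and converts $ab$ from a boundary edge ($\kappa=2$) to a chord ($\kappa=3$), contributing $+1$. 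No other pair changes, and $(n-1)^2-3+(2n-1)=n^2-3$ closes the induction. Either route is clean once the arithmetic is fixed; the case analysis you worried about (``pinning down exactly which pairs achieve $\kappa=3$'') is already essentially done in your sketch and requires no further work.
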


 So, for  maximal outerplanar graphs of a fixed order, the ratio $\kbm(G)/\kb(G)$ is maximized or minimized exactly when $\kbm(G)$ is maximized or minimized, respectively.  So it suffices to focus on bounds for $\kbm(G)$ if $G$ is maximal outerplanar. We show that if $G$ is a maximal outerplanar graph, then $\kbm(G) \le \frac{3}{2}+o(1)$. Moreover, this bound is asymptotically sharp.  We conjecture that $\kbm(G)\geq 19/18$ for every maximal outerplanar graph of order at least $4$, and we demonstrate that if this conjecture is true, then the bound is asymptotically sharp.

It is known that every maximal outerplanar graph is a $2$-tree. As was the case for maximal outerplanar graphs, one can show (by induction), that if $G$ is a $2$-tree of order $n \ge 3$, then $\kb(G)=2+\frac{2n-6}{n(n-1)}$. Thus, for  $2$-trees of a fixed order, the ratio $\kbm(G)/\kb(G)$ is maximized or minimized exactly when $\kbm(G)$ is maximized or minimized, respectively. We show that if $G$ is a $2$-tree, then the bound $\kbm(G) \ge 1$, guaranteed by Robbins' Theorem, is asymptotically sharp.

\subsection{Maximal outerplanar graphs}\label{MOPsection}

We use the following notation throughout this section.
Let $G$ be a maximal outerplanar graph with a given embedding
in the plane. We say that an edge is an {\em outer edge} of $G$ if
it is part of the cycle $C$ which forms the boundary of the outer face
of $G$. An edge that is not an outer edge is a {\em chord} of $G$.
We denote by $G'$ the spanning subgraph of $G$ whose edges
are the chords of $G$.

The {\em weak dual} $G^*$ of $G$ is the graph
whose vertices are the faces of $G$ distinct from the outer
face, and two vertices of $G^*$ are adjacent if, as faces of $G$, their boundaries share an edge.
If $u^*$ is a vertex of $G^*$, then $V(u^*)$ denotes the set of
vertices of $G$ that are on the boundary of $u^*$.  It is well-known that
$G^*$ is a tree of maximum degree at most 3, and it is easy to see that
a vertex of $G^*$ has degree 3 if and only if its boundary consists
of three chords.

\begin{lemma} \label{la:y>=x/2+2}
Let $G$ be a maximal outerplanar graph of order $n$.
Let $A$ be the set of vertices of $G$ that are on a $4$-cycle in $G'$.
Let $B_2$ be the set of vertices of degree 2 in $G$. Then
\[ |B_2| \geq \frac{1}{2}|A| + 2. \]
\end{lemma}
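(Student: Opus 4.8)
The plan is to work entirely with the weak dual $G^*$, which is a tree of maximum degree at most $3$. First I would dispose of the trivial case $n=3$ (there $G\cong K_3$, so $G'$ has no edges, $A=\emptyset$, and $|B_2|=3\ge 2$). For $n\ge 4$ I would set up two dual correspondences. A vertex of degree $2$ in $G$ lies on two consecutive outer edges, and since $G$ is a triangulation its two neighbours are joined by a chord; the triangle they form is a face with exactly one chord on its boundary, i.e.\ a \emph{leaf} of $G^*$. This gives a bijection between $B_2$ and the leaves of $G^*$. Writing $L$ and $D_3$ for the number of degree-$1$ and degree-$3$ vertices of $G^*$, the standard identity for trees of maximum degree $3$ (namely $L=D_3+2$) yields $|B_2|=D_3+2$. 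Thus the lemma is \emph{equivalent} to the single inequality $|A|\le 2D_3$.

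Next I would characterize the $4$-cycles of $G'$. Because $G$ is a triangulated polygon with every vertex on the outer cycle, a $4$-cycle $abcd$ in $G$ bounds a quadrilateral region with no vertex in its interior, so it is split into exactly two triangular faces of $G$ by one diagonal. If in addition the four edges are all chords (so the $4$-cycle lies in $G'$), then, since the diagonal is interior and hence also a chord, both of these triangles have all three edges equal to chords; that is, both are degree-$3$ faces of $G$, i.e.\ degree-$3$ vertices of $G^*$, and they share the diagonal edge. Conversely, two adjacent degree-$3$ faces produce exactly one such $4$-cycle. Hence the $4$-cycles of $G'$ correspond to the edges of $G^*$ joining two degree-$3$ vertices, and in particular the two faces of any $4$-cycle lie in a common connected component of the subforest $T_3\subseteq G^*$ induced by the degree-$3$ vertices.

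I would then count by components. Let $S_1,\dots,S_\ell$ be the components of $T_3$, with $S_i$ having $p_i$ faces, so $\sum_i p_i=D_3$. The union of the faces in one component is a connected union of triangles along a subtree of $G^*$, hence a triangulated sub-polygon of $G$; since a triangulated polygon with $p$ triangles has exactly $p+2$ vertices, the union of $S_i$ spans exactly $p_i+2$ vertices of $G$. Every vertex of $A$ lies on some $4$-cycle whose two faces lie in a common component $S_i$ with $p_i\ge 2$, so that vertex is one of the $p_i+2$ vertices spanned by $S_i$. Using $p+2\le 2p$ for $p\ge 2$, this gives
\[
|A|\;\le\;\sum_{i:\,p_i\ge 2}(p_i+2)\;\le\;\sum_{i:\,p_i\ge 2}2p_i\;\le\;2\sum_i p_i\;=\;2D_3 .
\]
Combined with $|B_2|=D_3+2$, this yields $|B_2|=D_3+2\ge \tfrac12|A|+2$, as required.

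The routine parts are the two dual correspondences and the tree identity $L=D_3+2$. The crux — and the step I expect to need the most care — is the $4$-cycle characterization, which rests on the planarity fact that a $4$-cycle in an outerplanar triangulation encloses a region split into exactly two faces, together with the count that a component of $T_3$ with $p$ faces spans only $p+2$ vertices of $G$. It is precisely this ``$p+2$ versus $2p$'' slack that produces the factor $\tfrac12$. Finally, I would note that the inequality is tight: for two adjacent degree-$3$ faces one has $|A|=4=2D_3$ and $|B_2|=4=\tfrac12|A|+2$, so no further improvement is possible, a useful sanity check on the argument.
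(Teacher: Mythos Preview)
Your proof is correct and follows essentially the same route as the paper. Both arguments work in the weak dual $G^*$, identify $|B_2|$ with the number of leaves, use the tree identity $n_1^*=n_3^*+2$, show that the vertices of $A$ are precisely those lying on faces in the subgraph of $G^*$ induced by degree-$3$ vertices with a degree-$3$ neighbour, and then bound $|A|$ componentwise via the count that a subtree of $p$ faces spans $p+2$ vertices together with $p+2\le 2p$ for $p\ge 2$.
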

\begin{proof}
Denote the set of vertices of  $G^*$ whose degree is 3 and who are adjacent to
some other vertex of degree 3 by $V_3^*$. We prove the lemma by bounding
$|V_3^*|$ from above in terms of $|B_2|$, and from below in terms of $|A|$.

We first bound  $|V_3^*|$ from above in terms of $|B_2|$.
Since a vertex of $G$ has degree 2 if and only if its incident edges are both on
the outer cycle, there is a natural bijection between the vertices of $G$ of degree 2
and the leaves of $G^*$. Hence $|B_2|=n_1^*$, where $n_i^*$ is the number of
vertices of $G^*$ of degree $i$. On the other hand, since $G^*$ is a tree of maximum
degree at most 3, we have $n^*_3 = n^*_1-2$. Hence
\begin{align}  \label{eq:v3-bounded-by-b2}
|V_3^*| \leq n^*_3 = n_1^* - 2 = |B_2| -2.
\end{align}

We now bound $|V_3^*|$ from below in terms of $|A|$.
We first show that
\begin{align} \label{eq:A-is-union-of-V(u)}
A = \bigcup_{u^* \in V_3^*} V(u^*).
\end{align}

To see that $A \subseteq \bigcup_{u^* \in V_3^*} V(u^*)$, let $v\in A$.
Then $v$ is on some $4$-cycle $C_v$ whose edges are chords.
Since $G$ is maximal outerplanar, $C_v$ also has a chord $e$. The two
faces $u_1^*$ and $u_2^*$ of $G$ that have $e$ on their respective boundaries
are adjacent in $G^*$.  Further, since the edges on their boundaries are all chords, we see that $u_1^*$ and $u_2^*$ have degree
3 in $G^*$.  Hence, we have $u^*_1, u^*_2 \in V_3^*$ and $v \in V(u_1^*) \cup V(u_2^*)$,
and it follows that $v\in \bigcup_{u^* \in V_3^*} V(u^*)$.

To see that $\bigcup_{u^* \in V_3^*} V(u^*)\subseteq A$, let
$v \in \bigcup_{u^* \in V_3^*} V(u^*)$. Then $v \in V(u_1^*)$ for some
$u_1^* \in V_3^*$. By the definition of $V_3^*$, the vertex $u_1^*$ has a neighbour
$u_2^*$ in $G^*$ of degree 3. Since $V(u_1^*)$ and $V(u^*_2)$ share
an edge of $G$, and since the edges on the boundary of $u_i^*$, for $i \in \{1,2\}$,
are all chords, there is a $4$-cycle of $G$ containing the vertices
of $V(u_1^*) \cup V(u_2^*)$, whose edges are chords. Hence $v\in A$.
This proves \eqref{eq:A-is-union-of-V(u)}.

Let $H_1^*, H_2^*,\ldots, H_k^*$ be the components of the graph $G^*[V_3^*]$.
Then each $H_i^*$ is a tree on at least two vertices.
Clearly, if $H_i^*$ has two vertices, then $\left|\bigcup_{u^*\in V(H_i^*)} V(u^*)\right| = 4$.
Every additional vertex in $H_i^*$ increases $\left|\bigcup_{u^*\in V(H_i^*)} V(u^*)\right|$ by
one, hence
\[
\left|\textstyle\bigcup_{u^*\in V(H_i^*)} V(u^*)\right| = \left|V(H_i^*)\right|+2 \leq 2 \left|V(H_i^*)\right|.
\]
Summation over $i=1,2,\ldots,k$ yields
\begin{align}
|A| & =  \left|\textstyle\bigcup_{u^*\in V_3^*} V(u^*)\right| \nonumber \\
    & =  \left|\textstyle\bigcup_{i=1}^k \bigcup_{u^*\in V(H_i)} V(u^*)\right| \nonumber \\
    & \leq   \sum_{i=1}^k \left|\textstyle\bigcup_{u^*\in V(H_i)} V(u^*)\right| \nonumber \\
    & \leq  \sum_{i=1}^k 2\left|V(H_i^*)\right| \nonumber \\
    & =  2|V_3^*|.   \label{eq:v3-bounded-by-a}
\end{align}
Combining \eqref{eq:v3-bounded-by-b2} with \eqref{eq:v3-bounded-by-a}
now yields the statement of the lemma.
\end{proof}

\begin{theorem}  \label{theo:upper-bound-for-MOP}
If $G$ is a maximal outerplanar graph of order $n$, then
\[ \bar{\kappa}_{max}(G) \leq \frac{3}{2} + \frac{n-5}{n(n-1)}. \]
\end{theorem}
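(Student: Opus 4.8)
The plan is to work with total connectivity. Since $\kbm(G)=\Km(G)/[n(n-1)]$, the assertion is equivalent to $\Km(G)\le \tfrac32 n(n-1)+(n-5)=3\binom n2+(n-5)$, i.e.\ to showing that $\sum_{\{u,v\}}\bigl(\theta_D(u,v)-2\bigr)\le \binom n2+(n-5)$ for every orientation $D$ of $G$. Two per-pair bounds are available: Observation~\ref{degree_upperbd} gives $\theta_D(u,v)\le\min\{\deg_G(u),\deg_G(v)\}$, and the argument in Theorem~\ref{GeneralRatioBound} gives $\theta_D(u,v)\le 2\kappa_G(u,v)$. Neither suffices alone: the potential $P(G)$ can already exceed the target bound (for instance for a triangulated ``strip'', in which two vertices have degree $2$, two have degree $3$, and all others have degree $4$), while $2\kappa_G$ is far too weak for pairs of large degree. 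The crux must therefore be a structural restriction on which pairs can carry a large $\theta$-value.

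The heart of the proof is the following claim: \emph{for every orientation $D$ and every pair $u,v$, if $\theta_D(u,v)\ge 4$ then both $u$ and $v$ lie in $A$} (the set of vertices on a $4$-cycle of $G'$). First, $\theta_D(u,v)\ge 4$ forces $\deg_G(u),\deg_G(v)\ge 4$, since the out-edges carrying the $\kappa_D(u,v)$ forward paths and the in-edges carrying the $\kappa_D(v,u)$ backward paths at $u$ are distinct, so $\deg_G(u)\ge\theta_D(u,v)$. If $\kappa_G(u,v)=2$, fix a $2$-vertex cut $\{a,b\}$ and let $U$ be the side containing $u$; realizing $\kappa_D(u,v)=\kappa_D(v,u)=2$ requires directed paths $u\leadsto a$, $u\leadsto b$, $a\leadsto u$, $b\leadsto u$ inside $U\cup\{a,b\}$, and since an edge cannot be traversed in both directions, $u\leadsto a$ and $a\leadsto u$ must be edge-disjoint. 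Hence $u$ lies on a cycle of chords, i.e.\ on a $4$-cycle of $G'$, and likewise for $v$. When $\kappa_G(u,v)\ge 3$ the same principle applies locally at each endpoint, using that the weak dual $G^*$ is a tree of maximum degree $3$, so the only way an endpoint can both emit and absorb two internally disjoint paths is to sit inside two adjacent internal triangles, again placing it on a $4$-cycle of $G'$. Making this routing argument fully rigorous — reconciling the global behaviour of the four path-families with the thin (treewidth-$2$) structure encoded by $G^*$ and the chord graph — is the main obstacle, and it is where the description of $G^*$ preceding Lemma~\ref{la:y>=x/2+2} is used.

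Granting the claim, I would bound $\sum_{\{u,v\}}(\theta_D(u,v)-2)$ by splitting the pairs. A pair with a degree-$2$ endpoint satisfies $\theta_D\le 2$ and contributes at most $0$. Among the $\binom{n_{\ge 3}}{2}$ pairs of vertices of degree at least $3$ (where $n_{\ge 3}=n-|B_2|$), each contributes at most $1$, except that a pair with both endpoints in $A$ may contribute the extra amount $\min\{\deg_G(u),\deg_G(v)\}-3$ coming from the potential bound. Thus
\[
\sum_{\{u,v\}}\bigl(\theta_D(u,v)-2\bigr)\ \le\ \binom{n_{\ge 3}}{2}+\sum_{\{u,v\}\subseteq A}\bigl(\min\{\deg_G(u),\deg_G(v)\}-3\bigr).
\]

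Finally, I would control the right-hand side using Lemma~\ref{la:y>=x/2+2} together with the identity $\sum_v\deg_G(v)=4n-6$ for maximal outerplanar graphs. The leading term $\binom{n-|B_2|}{2}$ decreases as $|B_2|$ grows, whereas the correction over $A$ can grow only as fast as $|A|\le 2|B_2|-4$ permits (the excess degree available to vertices of $A$ is limited because $\sum_v(\deg_G(v)-2)=2n-6$ is shared among all vertices incident to chords). Optimizing over $|B_2|\ge 2$, the extreme case is $|B_2|=2$: then Lemma~\ref{la:y>=x/2+2} forces $A=\varnothing$, the correction vanishes, and the bound reduces to $\binom{n-2}{2}\le\binom n2+(n-5)$, which holds for all $n\ge 3$. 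For larger $|B_2|$ the decrease of $\binom{n-|B_2|}{2}$ dominates the growth of the $A$-correction, so the inequality persists; this monotonicity check is routine bookkeeping, and the only genuinely delicate point is the confinement claim of the second paragraph.
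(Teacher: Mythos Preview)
Your central structural claim --- that $\theta_D(u,v)\ge 4$ forces both endpoints into $A$ --- is false as stated, and this is what breaks the argument. The failure occurs for chord-adjacent pairs. Take the maximal outerplanar graph on six vertices with outer cycle $v_1v_2v_3v_4v_5v_6$ and chords $v_1v_3,\,v_3v_5,\,v_5v_1$; here $G'$ is a single triangle, so it has no $4$-cycle and $A=\varnothing$. Yet the orientation with arcs $(v_1,v_3)$, $(v_1,v_2)$, $(v_2,v_3)$, $(v_1,v_6)$, $(v_6,v_5)$, $(v_5,v_3)$, $(v_3,v_4)$, $(v_4,v_5)$, $(v_5,v_1)$ gives $\kappa_D(v_1,v_3)=3$ and $\kappa_D(v_3,v_1)=1$, so $\theta_D(v_1,v_3)=4$ with neither endpoint in $A$. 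The flaw in your case analysis is that $\theta\ge 4$ need not split as $2+2$: for adjacent pairs the direct arc permits a $3+1$ split, and your ``emit and absorb two internally disjoint paths'' heuristic then does not apply.

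The paper avoids this by treating adjacent and non-adjacent pairs separately. For non-adjacent pairs it proves precisely what you were after (non-adjacent $\theta=4$ forces both endpoints into $A$, and in any case $\theta\le 4$); for adjacent pairs it imposes no $A$-condition and simply uses $\theta\le 5$ on the $n-3$ chord pairs and $\theta\le 3$ on the $n$ outer-edge pairs. The summation then carries both parameters $x=|A|$ and $y=|B_2|$, invokes Lemma~\ref{la:y>=x/2+2} to replace $y$ by $x/2+2$, and a one-variable maximization shows the resulting expression is largest at $x=0$ (not at $y=2$ as you conjectured). Note also that your $A$-correction $\sum_{\{u,v\}\subseteq A}\bigl(\min\{\deg_G(u),\deg_G(v)\}-3\bigr)$ is substantially looser than the paper's bound (which caps non-adjacent $A$-pairs at $\theta\le 4$, contributing at most $2$ each), so even after restricting your confinement claim to non-adjacent pairs the final optimization would not be the routine bookkeeping you suggest.
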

\begin{proof}
Let $D$ be an arbitrary orientation of $G$. Let $C$, $A$ and $B_2$ be as defined
above, and let $u,v \in V(G)$. We bound $\theta(u,v)$ from above.

First assume that $uv \in E(G)$. We prove that
\begin{align} \label{eq:bound-on-theta-adjacent}
\theta(u,v) \leq \begin{cases}
     5, & \textrm{if $uv$ is a chord;} \\
     3, & \textrm{if $uv$ is an outer edge.}
        \end{cases}
\end{align}
If $uv$ is a chord, then $u$ and $v$ have two common neighbours, say $a$ and $b$.
Every path between $u$ and $v$ in $G$, and thus in $D$, contains either $a$
or $b$ or the edge $uv$. Hence, apart from the path consisting of the edge $uv$, there exist
at most two internally disjoint directed $u$--$v$ paths in $D$, and at most two internally disjoint
directed $v$--$u$ paths in $D$. Hence,  if $uv$ is a chord, then $\theta(u,v)\leq 5$.

If $uv$ is an outer edge, then $u$ and $v$ have exactly one common neighbour, say $a$.
Every path between $u$ and $v$ in $G$ contains either $a$
or the edge $uv$.  Hence, apart from the path consisting of the edge $uv$, there exist no two internally disjoint directed $u$--$v$ paths in $D$, and no two internally disjoint directed $v$--$u$ paths in $D$. Hence, if $uv$ is an outer edge, then $\theta(u,v)\leq 3$.

Now assume that $uv \notin E(G)$. We prove that
\begin{align} \label{eq:bound-on-theta-nonadjacent}
\theta(u,v) \leq \begin{cases}
     2, & \textrm{if $\{u,v\} \cap B_2 \neq \emptyset$;} \\
     4, & \textrm{if $\{u,v\} \subseteq A$;} \\
     3, & \textrm{otherwise.}
        \end{cases}
\end{align}
If $\{u,v\} \cap B_2 \neq \emptyset$, then
$\theta(u,v) \leq \min\{{\rm deg}_G(u), {\rm deg}_G(v)\}=2$.
So assume that $\{u,v\} \cap B_2 = \emptyset$.
Clearly, since $u$ and $v$ are nonadjacent, and since $G$ is maximal outerplanar,
there exist two adjacent vertices $a$ and $b$ of $G$ that separate $u$ and $v$.
Hence, there exist at most two internally disjoint directed $u$--$v$ paths in $D$, and at most two internally disjoint directed $v$--$u$ paths in $D$.
It follows that $\theta(u,v) \leq 4$.  In order to complete the proof of
\eqref{eq:bound-on-theta-nonadjacent}, it suffices to show the following:
\begin{align}  \label{eq:u-is-on-a-4-cycle}
\textrm{If $\theta(u,v)=4$, then $u$ is on a $4$-cycle in $G'$. }
\end{align}
Assume that $\theta(u,v)=4$.
We may assume that if $C$ is traversed in clockwise direction, then
$u$, $a$ and $b$ appear in this order.
Let $u_1, u_2,\ldots,u_k$ be the neighbours of $u$ in clockwise order,
where $u_1$ and $u_k$ are the neighbours of $u$ in $C$.
Since $G$ is outerplanar, there exists $j$ such that $u_1, \ldots, u_j$
are in the  $u$--$a$ subpath, and $u_{j+1},\ldots, u_k$ are on the $b$--$u$ subpath
of $C$. Then $\{u_j, u_{j+1}\}$ separates $u$ and $v$ in $G$.
Also, $uu_j$ is a chord of $G$, since otherwise, if $uu_j$ was an outer edge,
then every $u$--$v$ path in $G$ passes either through $uu_j$ or through $u_{j+1}$,
implying that $\theta(u,v) \leq 3$. Similarly, $uu_{j+1}$ is a chord.

There exists a common neighbour $c$ of $u_j$ and $u_{j+1}$ distinct from $u$.
We show that $u_jc$ is a chord of $G$. Suppose to the contrary that
$u_jc$ is an outer edge of $G$. Since every $u$--$v$ path in $G$ passes through
$\{u_j, u_{j+1}\}$, it follows that every $u$--$v$ path in $G$ passes either
through the edge $u_jc$ or the vertex $u_{j+1}$, which implies that
$\theta(u,v) \leq 3$, a contradiction. Hence $u_jc$ is a chord. Similarly
we show that $u_{j+1}c$ is a chord.
We conclude that $u, u_j, c, u_{j+1}, u$ is
a $4$-cycle whose edges are chords, so $u\in A$, and
\eqref{eq:u-is-on-a-4-cycle} follows.

We use \eqref{eq:bound-on-theta-adjacent} and \eqref{eq:bound-on-theta-nonadjacent}
to bound the total connectivity of $D$. Let $x=|A|$ and $y=|B_2|$.
First note that $G$ has $n$ unordered pairs $\{u,v\}$ of vertices that are joined by an outer edge.
Of these, exactly $2y$ pairs involve a vertex of degree 2, so that $\theta(u,v) \leq 2$
in this case, and the remaining $n-2y$ pairs satisfy $\theta(u,v) \leq 3$.
Next, note that $G$ has $n-3$ pairs $\{u,v\}$ of vertices that are joined by a chord,
and for these we have $\theta(u,v) \leq 5$.

Of the $\binom{n}{2}-2n+3$ pairs $\{u,v\}$ of nonadjacent vertices, at most
$\binom{x}{2}$ are contained in $A$, so $\theta(u,v) \leq 4$ for these pairs.
There are $\binom{n}{2} - \binom{n-y}{2}$ unordered pairs $\{u,v\}$
of vertices involving a vertex of degree 2, and $2y$ of these are joined by
an outer edge, while none of them are joined by chords. Hence there are
$\binom{n}{2} - \binom{n-y}{2} -2y$ pairs of nonadjacent vertices involving
a vertex of degree 2, so that $\theta(u,v)\leq 2$. The remaining
$\binom{n-y}{2} - \binom{x}{2} -2n+3 + 2y$ pairs satisfy $\theta(u,v) \leq 3$.
Summation of $\theta(u,v)$ over all unordered pairs $\{u,v\}$ yields that
\[
\sum_{ \{u,v\} \subseteq V(G)} \theta(u,v)
    \leq  2 \binom{n}{2} + \binom{n-y}{2} + \binom{x}{2} + 2n-6.
\]
Now $y\geq \frac{1}{2}x + 2$ by Lemma \ref{la:y>=x/2+2}, hence
\begin{align}
\sum_{ \{u,v\} \subseteq V(G)} \theta(u,v)
   & \leq   2 \binom{n}{2} + \binom{n-2 - \frac{1}{2}x}{2} + \binom{x}{2} + 2n-6 \nonumber \\
   & =  \frac{3}{2}n^2 - \frac{1}{2}n - 5 - \frac{1}{2}nx + \frac{5}{8}x^2 - \frac{5}{4}x.
      \label{eq:mops-bound-in-terms-of-x-y}
\end{align}
Since $y \geq \frac{1}{2}x+2$, we have $n \geq x+y \geq \frac{3}{2}x+2$, and thus
$x \leq \frac{2}{3}n - \frac{4}{3}$. Elementary calculus shows that the right hand side
of \eqref{eq:mops-bound-in-terms-of-x-y}, as a function of $x$, is maximized
subject to $0 \leq x \leq \frac{2}{3}n - \frac{4}{3}$ when $x=0$.
Substituting this yields
\[  \sum_{ \{u,v\} \subseteq V(G)} \theta(u,v)
     \leq  \frac{3}{2}n^2 - \frac{1}{2}n - 5, \]
and dividing by $n(n-1)$ yields the theorem.
\end{proof}

The bound of Theorem \ref{theo:upper-bound-for-MOP} is asymptotically sharp.
Let $G_{2n}$ be the maximal outerplanar graph
obtained from the path $P_{2n}:v_1v_2\dots v_{2n}$ by adding the edges of the paths $Q:v_1v_3\dots v_{2n-1}$ and $R:v_2v_4\dots v_{2n}$. Note that $G_{2n}$ is the square of the path of order $2n$.
Let $D_{2n}$ be the orientation of $G_{2n}$ obtained by directing the edges
of $P_{2n}$ from $v_1$ to $v_{2n}$, the edges of $Q$ from $v_{2n-1}$ to $v_1$,
and the edges of $R$ from $v_{2n}$ to $v_2$ (see Figure~\ref{Snake}).  It is easy to see that all unordered
pairs $\{u,v\}$ of vertices of $D_{2n}$ satisfy $\theta(u,v)=3$, except for those
$4n-3$ pairs that involve a vertex of degree 2 in $G$,
for which we have $\theta(u,v)=2$. Hence, we have $\kbm(G_{2n})\geq\kb(D_{2n})= \frac{3}{2} - \frac{4n-3}{2n(2n-1)}$.  

\begin{figure}
\centering{
\begin{tikzpicture}
\tikzstyle{every path}=[color =black, line width = 0.3 pt, > = triangle 45]
\pgfmathsetmacro{\n}{4}
\pgfmathtruncatemacro{\m}{\n-1}
\foreach \x in {0,...,\m}
{
\pgfmathtruncatemacro{\xl}{2*\x+1}
\pgfmathtruncatemacro{\al}{2*\x+2}
\hollowvertex (\x) at (\x,0) {};
\node[below] at (\x,0){$v_{\xl}$};
\pgfmathtruncatemacro{\a}{\n+\x}
\hollowvertex (\a) at (\x+0.5,1) {};
\node[above] at (\x+0.5,1){$v_{\al}$};
}
\pgfmathtruncatemacro{\l}{\n-2}
\foreach \x in {0,...,\m}
{
\pgfmathtruncatemacro{\a}{\n+\x}
\pgfmathtruncatemacro{\b}{2*\n+\x}
\path
(\x) edge[->-=0.85] (\a)
;
}
\foreach \x in {0,...,\l}
{
\pgfmathtruncatemacro{\a}{\n+\x}
\pgfmathtruncatemacro{\c}{1+\x}
\pgfmathtruncatemacro{\d}{1+\a}
\path
(\a) edge[->-=0.85] (\c)
(\c) edge[->-=0.85] (\x)
(\d) edge[->-=0.85] (\a)
;
}

\node at (3.75,0) {$\cdots$};
\node at (4.25,1) {$\cdots$};

\hollowvertex (a1) at (4.5,0) {};
\node[below] at (4.5,0) {$v_{2n-3}$};
\hollowvertex (a3) at (5,1) {};
\node[above] at (5,1) {$v_{2n-2}$};
\hollowvertex (a5) at (5.5,0) {};
\node[below] at (5.5,0) {$v_{2n-1}$};
\hollowvertex (a7) at (6,1) {};
\node[above] at (6,1) {$v_{2n}$};
\path
(a1) edge[->-=0.85] (a3)
(a3) edge[->-=0.85] (a5)
(a5) edge[->-=0.85] (a7)
(a5) edge[->-=0.85] (a1)
(a7) edge[->-=0.85] (a3);
\end{tikzpicture}
}
\caption{The orientation $D_{2n}$ of maximal outerplanar graph $G_{2n}$}\label{Snake}
\end{figure}
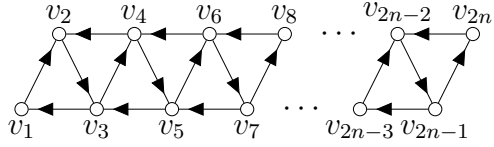

\medskip

We now discuss a lower bound for $\KBM(G)$ if $G$ is a maximal outerplanar graph. Since maximal outerplanar graphs of order at least $3$ are $2$-connected, it follows immediately, from Robbins' Theorem, that $\kbm(G)\geq 1$ for every maximal outerplanar graph $G$ of order at least $3$. Moreover, this bound is tight since $\kbm(K_3)=1$. However, the graph $K_3$ seems exceptional.

Table~\ref{MOPTable} gives the minimum value of $\kbm(G)$ taken over all maximal outerplanar graphs $G$ of order $n$ for $3\leq n\leq 9$.  For $3\leq n\leq 8$, the fan $F_n$ (the join of $K_1$ and $P_{n-1}$)  is the unique maximal outerplanar graph of order $n$ that realizes this minimum value.  For $n=9$, the fan $F_9$ and one other graph attain the minimum value of $\kbm$.  We do not pursue the details, but it appears that $\lim_{n\rightarrow\infty}\kbm(F_n)=\tfrac{5}{4}.$

\begin{table}
\centering{
\begin{tabular}{c c}
Order & Minimum value of $\kbm$\\\hline
3 & $1$\\
4 & $13/12$\\
5 & $23/20$\\
6 & $7/6$\\
7 & $25/21$\\
8 & $67/56$\\
9 & $29/24$
\end{tabular}
}
\caption{The minimum value of $\kbm$ over all maximal outerplanar graphs of a given order}
\label{MOPTable}
\end{table}

Based on the information in Table~\ref{MOPTable}, one might initially guess that the minimum value of $\kbm$ must increase with the order. However, we now describe an infinite family of maximal outerplanar graphs for which $\KBM$ is asymptotically at most $\frac{19}{18}+o(1)$.  We conjecture that $\kbm(G)\geq 19/18$ for every maximal outerplanar graph $G$ of order at least $4$. The following example was inspired by the the example from \cite{HenningOellermann2004} that demonstrates sharpness for the lower bound of Theorem \ref{henningoellermann}.

\begin{example}
Define a {\em trigon} as a triangle with every edge coloured red.  Define a {\em lozenge} as a $K_4-e$ in which the edges of a perfect matching are coloured red, and all other edges are coloured black. Construct graph $G_0$ as follows: start with a trigon, and glue a lozenge red-on-red to every edge of the trigon, so that each vertex of the trigon has degree $5$ in the resulting graph. See Figure~\ref{G0} for illustrations of a trigon, a lozenge, and the graph $G_0$.

\begin{figure}[htb]
\centering
\begin{subfigure}[t]{0.3\textwidth}
\centering
\begin{tikzpicture}[scale=0.8]
\hollowvertex(u) at (-1,-1.73/3) {};
\hollowvertex(v) at (1,-1.73/3) {};
\hollowvertex(w) at (0,2*1.73/3) {};
\path[line width=3pt, color=red]
(u) edge (v)
(v) edge (w)
(w) edge (u)
;
\end{tikzpicture}
\caption{A trigon}
\end{subfigure}%
\begin{subfigure}[t]{0.3\textwidth}
\centering
\begin{tikzpicture}[scale=0.8]
\hollowvertex(u0) at (-1,-1.73/3) {};
\hollowvertex(v0) at (1,-1.73/3) {};
\hollowvertex(u) at (1,-2-1.73/3) {};
\hollowvertex (u1) at (-1,-2-1.73/3) {};
\path[line width=3pt, color=red]
(v0) edge (u0)
(u1) edge (u)
;
\path
(u) edge (u0)
(v0) edge (u)
(u0) edge (u1)
;
\end{tikzpicture}
\caption{A lozenge}
\end{subfigure}%
\begin{subfigure}[t]{0.4\textwidth}
\centering
\begin{tikzpicture}[scale=0.8]
\hollowvertex(u0) at (-1,-1.73/3) {};
\hollowvertex(v0) at (1,-1.73/3) {};
\hollowvertex(u) at (1,-2-1.73/3) {};
\hollowvertex (u1) at (-1,-2-1.73/3) {};
\path[line width=3pt, color=red]
(v0) edge (u0)
(u1) edge (u)
;
\path
(u) edge (u0)
(v0) edge (u)
(u0) edge (u1)
;
\begin{scope}[rotate=120]
\hollowvertex(u0) at (-1,-1.73/3) {};
\hollowvertex(v0) at (1,-1.73/3) {};
\hollowvertex(u) at (1,-2-1.73/3) {};
\hollowvertex (u1) at (-1,-2-1.73/3) {};
\path[line width=3pt, color=red]
(v0) edge (u0)
(u1) edge (u)
;
\path
(u) edge (u0)
(v0) edge (u)
(u0) edge (u1)
;
\end{scope}
\begin{scope}[rotate=240]
\hollowvertex(u0) at (-1,-1.73/3) {};
\hollowvertex(v0) at (1,-1.73/3) {};
\hollowvertex(u) at (1,-2-1.73/3) {};
\hollowvertex (u1) at (-1,-2-1.73/3) {};
\path[line width=3pt, color=red]
(v0) edge (u0)
(u1) edge (u)
;
\path
(u) edge (u0)
(v0) edge (u)
(u0) edge (u1)
;
\end{scope}
\end{tikzpicture}
\caption{The graph $G_0$}
\end{subfigure}
\caption{A trigon, a lozenge, and the graph $G_0$} \label{G0}
\end{figure}
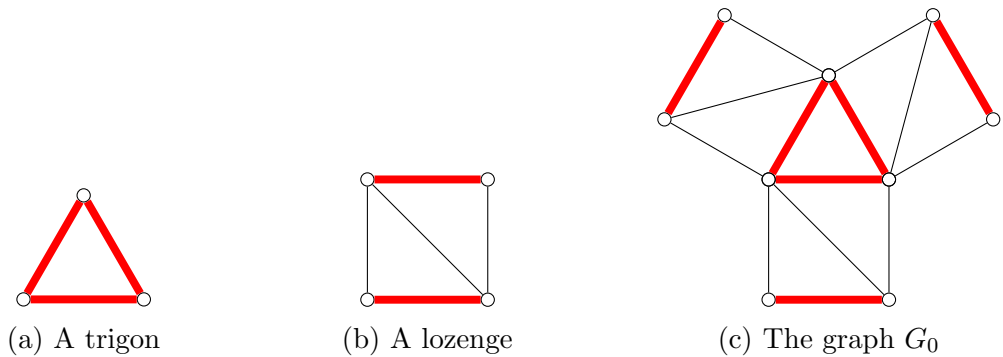

Suppose $G_{i-1}$ has been constructed for some $i >0$. Construct $G_i$ from $G_{i-1}$ as follows: glue a trigon to every red outer edge of $G_{i-1}$, and then glue two lozenges (red-on-red) onto the two red outer edges of each trigon, so that each vertex in the trigon has degree $5$ in the resulting graph (see Figure~\ref{G1}).

\begin{figure}
\begin{subfigure}[t]{0.49\textwidth}
\centering
\begin{tikzpicture}[scale=0.51]
\hollowvertex(u0) at (-1,-1.73/3) {};
\hollowvertex(v0) at (1,-1.73/3) {};
\hollowvertex(u) at (1,-2-1.73/3) {};
\hollowvertex (u1) at (-1,-2-1.73/3) {};
\path[line width=3pt, color=red]
(v0) edge (u0)
(u1) edge (u)
;
\path
(u) edge (u0)
(v0) edge (u)
(u0) edge (u1)
;
\begin{scope}[shift={(0,-3.16)},rotate=60]
\hollowvertex(u0) at (-1,-1.73/3) {};
\hollowvertex(v0) at (1,-1.73/3) {};
\hollowvertex(u) at (1,-2-1.73/3) {};
\hollowvertex (u1) at (-1,-2-1.73/3) {};
\path[line width=3pt, color=red]
(v0) edge (u0)
(u1) edge (u)
;
\path
(u) edge (u0)
(v0) edge (u)
(u0) edge (u1)
;
\draw [dashed,opacity=0] (u1) to [bend right=90,looseness=2.5]  (u);
\end{scope}
\begin{scope}[shift={(0,-3.16)},rotate=-60]
\hollowvertex(u0) at (-1,-1.73/3) {};
\hollowvertex(v0) at (1,-1.73/3) {};
\hollowvertex(u) at (1,-2-1.73/3) {};
\hollowvertex (u1) at (-1,-2-1.73/3) {};
\path[line width=3pt, color=red]
(v0) edge (u0)
(u1) edge (u)
;
\path
(u) edge (u0)
(v0) edge (u)
(u0) edge (u1)
;
\draw [dashed,opacity=0] (u1) to [bend right=90,looseness=2.5]  (u);
\end{scope}
\begin{scope}[rotate=120]
\hollowvertex(u0) at (-1,-1.73/3) {};
\hollowvertex(v0) at (1,-1.73/3) {};
\hollowvertex(u) at (1,-2-1.73/3) {};
\hollowvertex (u1) at (-1,-2-1.73/3) {};
\path[line width=3pt, color=red]
(v0) edge (u0)
(u1) edge (u)
;
\path
(u) edge (u0)
(v0) edge (u)
(u0) edge (u1)
;
\begin{scope}[shift={(0,-3.16)},rotate=60]
\hollowvertex(u0) at (-1,-1.73/3) {};
\hollowvertex(v0) at (1,-1.73/3) {};
\hollowvertex(u) at (1,-2-1.73/3) {};
\hollowvertex (u1) at (-1,-2-1.73/3) {};
\path[line width=3pt, color=red]
(v0) edge (u0)
(u1) edge (u)
;
\path
(u) edge (u0)
(v0) edge (u)
(u0) edge (u1)
;
\draw [dashed,opacity=0] (u1) to [bend right=90,looseness=2.5]  (u);
\end{scope}
\begin{scope}[shift={(0,-3.16)},rotate=-60]
\hollowvertex(u0) at (-1,-1.73/3) {};
\hollowvertex(v0) at (1,-1.73/3) {};
\hollowvertex(u) at (1,-2-1.73/3) {};
\hollowvertex (u1) at (-1,-2-1.73/3) {};
\path[line width=3pt, color=red]
(v0) edge (u0)
(u1) edge (u)
;
\path
(u) edge (u0)
(v0) edge (u)
(u0) edge (u1)
;
\draw [dashed,opacity=0] (u1) to [bend right=90,looseness=2.5]  (u);
\end{scope}
\end{scope}
\begin{scope}[rotate=240]
\hollowvertex(u0) at (-1,-1.73/3) {};
\hollowvertex(v0) at (1,-1.73/3) {};
\hollowvertex(u) at (1,-2-1.73/3) {};
\hollowvertex (u1) at (-1,-2-1.73/3) {};
\path[line width=3pt, color=red]
(v0) edge (u0)
(u1) edge (u)
;
\path
(u) edge (u0)
(v0) edge (u)
(u0) edge (u1)
;
\begin{scope}[shift={(0,-3.16)},rotate=60]
\hollowvertex(u0) at (-1,-1.73/3) {};
\hollowvertex(v0) at (1,-1.73/3) {};
\hollowvertex(u) at (1,-2-1.73/3) {};
\hollowvertex (u1) at (-1,-2-1.73/3) {};
\path[line width=3pt, color=red]
(v0) edge (u0)
(u1) edge (u)
;
\path
(u) edge (u0)
(v0) edge (u)
(u0) edge (u1)
;
\draw [dashed,opacity=0] (u1) to [bend right=90,looseness=2.5]  (u);
\end{scope}
\begin{scope}[shift={(0,-3.16)},rotate=-60]
\hollowvertex(u0) at (-1,-1.73/3) {};
\hollowvertex(v0) at (1,-1.73/3) {};
\hollowvertex(u) at (1,-2-1.73/3) {};
\hollowvertex (u1) at (-1,-2-1.73/3) {};
\path[line width=3pt, color=red]
(v0) edge (u0)
(u1) edge (u)
;
\path
(u) edge (u0)
(v0) edge (u)
(u0) edge (u1)
;
\draw [dashed,opacity=0] (u1) to [bend right=90,looseness=2.5]  (u);
\end{scope}
\end{scope}
\end{tikzpicture}
\caption{The graph $G_1$}
\label{G1}
\end{subfigure}
\begin{subfigure}[t]{0.49\textwidth}
\centering
\begin{tikzpicture}[scale=0.51]
\hollowvertex(u0) at (-1,-1.73/3) {};
\hollowvertex(v0) at (1,-1.73/3) {};
\hollowvertex(u) at (1,-2-1.73/3) {};
\hollowvertex (u1) at (-1,-2-1.73/3) {};
\path[line width=3pt, color=red]
(v0) edge (u0)
(u1) edge (u)
;
\path
(u) edge (u0)
(v0) edge (u)
(u0) edge (u1)
;
\begin{scope}[shift={(0,-3.16)},rotate=60]
\hollowvertex(u0) at (-1,-1.73/3) {};
\hollowvertex(v0) at (1,-1.73/3) {};
\hollowvertex(u) at (1,-2-1.73/3) {};
\hollowvertex (u1) at (-1,-2-1.73/3) {};
\path[line width=3pt, color=red]
(v0) edge (u0)
(u1) edge (u)
;
\path
(u) edge (u0)
(v0) edge (u)
(u0) edge (u1)
;
\draw [dashed] (u1) to [bend right=90,looseness=2.5]  (u);
\node at (0,-3.25) {$M_1$};
\end{scope}
\begin{scope}[shift={(0,-3.16)},rotate=-60]
\hollowvertex(u0) at (-1,-1.73/3) {};
\hollowvertex(v0) at (1,-1.73/3) {};
\hollowvertex(u) at (1,-2-1.73/3) {};
\hollowvertex (u1) at (-1,-2-1.73/3) {};
\path[line width=3pt, color=red]
(v0) edge (u0)
(u1) edge (u)
;
\path
(u) edge (u0)
(v0) edge (u)
(u0) edge (u1)
;
\draw [dashed] (u1) to [bend right=90,looseness=2.5]  (u);
\node at (0,-3.25) {$M_1$};
\end{scope}
\begin{scope}[rotate=120]
\hollowvertex(u0) at (-1,-1.73/3) {};
\hollowvertex(v0) at (1,-1.73/3) {};
\hollowvertex(u) at (1,-2-1.73/3) {};
\hollowvertex (u1) at (-1,-2-1.73/3) {};
\path[line width=3pt, color=red]
(v0) edge (u0)
(u1) edge (u)
;
\path
(u) edge (u0)
(v0) edge (u)
(u0) edge (u1)
;
\begin{scope}[shift={(0,-3.16)},rotate=60]
\hollowvertex(u0) at (-1,-1.73/3) {};
\hollowvertex(v0) at (1,-1.73/3) {};
\hollowvertex(u) at (1,-2-1.73/3) {};
\hollowvertex (u1) at (-1,-2-1.73/3) {};
\path[line width=3pt, color=red]
(v0) edge (u0)
(u1) edge (u)
;
\path
(u) edge (u0)
(v0) edge (u)
(u0) edge (u1)
;
\draw [dashed] (u1) to [bend right=90,looseness=2.5]  (u);
\node at (0,-3.25) {$M_1$};
\end{scope}
\begin{scope}[shift={(0,-3.16)},rotate=-60]
\hollowvertex(u0) at (-1,-1.73/3) {};
\hollowvertex(v0) at (1,-1.73/3) {};
\hollowvertex(u) at (1,-2-1.73/3) {};
\hollowvertex (u1) at (-1,-2-1.73/3) {};
\path[line width=3pt, color=red]
(v0) edge (u0)
(u1) edge (u)
;
\path
(u) edge (u0)
(v0) edge (u)
(u0) edge (u1)
;
\draw [dashed] (u1) to [bend right=90,looseness=2.5]  (u);
\node at (0,-3.25) {$M_1$};
\end{scope}
\end{scope}
\begin{scope}[rotate=240]
\hollowvertex(u0) at (-1,-1.73/3) {};
\hollowvertex(v0) at (1,-1.73/3) {};
\hollowvertex(u) at (1,-2-1.73/3) {};
\hollowvertex (u1) at (-1,-2-1.73/3) {};
\path[line width=3pt, color=red]
(v0) edge (u0)
(u1) edge (u)
;
\path
(u) edge (u0)
(v0) edge (u)
(u0) edge (u1)
;
\begin{scope}[shift={(0,-3.16)},rotate=60]
\hollowvertex(u0) at (-1,-1.73/3) {};
\hollowvertex(v0) at (1,-1.73/3) {};
\hollowvertex(u) at (1,-2-1.73/3) {};
\hollowvertex (u1) at (-1,-2-1.73/3) {};
\path[line width=3pt, color=red]
(v0) edge (u0)
(u1) edge (u)
;
\path
(u) edge (u0)
(v0) edge (u)
(u0) edge (u1)
;
\draw [dashed] (u1) to [bend right=90,looseness=2.5]  (u);
\node at (0,-3.25) {$M_1$};
\end{scope}
\begin{scope}[shift={(0,-3.16)},rotate=-60]
\hollowvertex(u0) at (-1,-1.73/3) {};
\hollowvertex(v0) at (1,-1.73/3) {};
\hollowvertex(u) at (1,-2-1.73/3) {};
\hollowvertex (u1) at (-1,-2-1.73/3) {};
\path[line width=3pt, color=red]
(v0) edge (u0)
(u1) edge (u)
;
\path
(u) edge (u0)
(v0) edge (u)
(u0) edge (u1)
;
\draw [dashed] (u1) to [bend right=90,looseness=2.5]  (u);
\node at (0,-3.25) {$M_1$};
\end{scope}
\end{scope}
\end{tikzpicture}
\caption{The graph $H_1$}
\label{H1}
\end{subfigure}
\caption{The graphs $G_1$ and $H_1$}
\label{G1H1}
\end{figure}
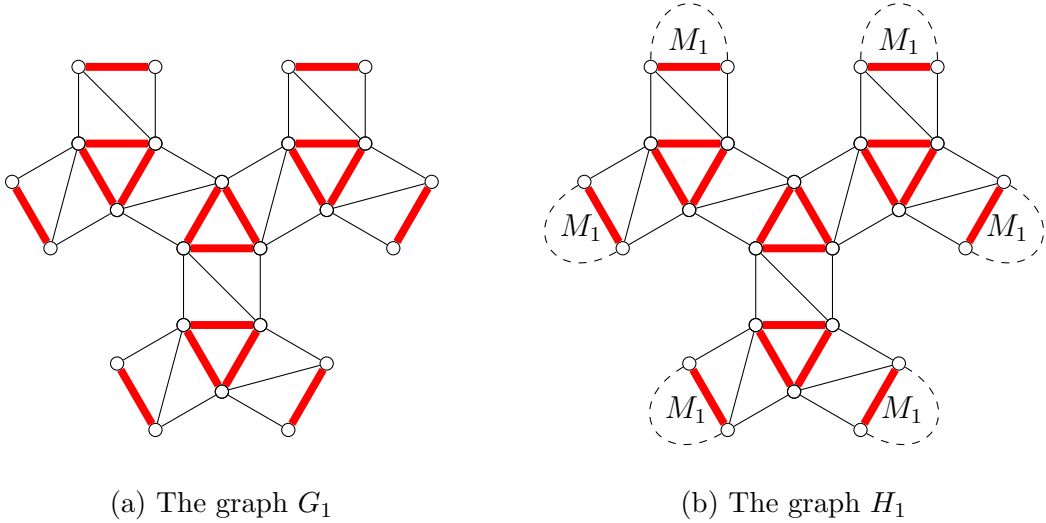

Now let $M_i$ be a sufficiently large maximal outerplanar graph with half of its vertices having degree $2$.  For a given integer $k$,  one can obtain such a graph $M_i$ of order $2k$ from an arbitrary maximal outerplanar graph $F$ of order $k$ as follows.  For every outer edge $e$ of $F$, add a new vertex $v_e$, and join $v_e$ to the endvertices of $e$. Colour one outer edge of $M_i$ red, and colour all other edges of $M_i$ black.  Construct the graph $H_i$ from $G_i$ by gluing a copy of $M_i$ (red-on-red) to every outer red edge of $G_i$ (see Figure~\ref{H1}).  We choose the graph $M_i$ in such a way that $|V(G_i)|=o(|V(M_i)|)$.  This way, if one chooses two vertices $u$ and $v$ at random from $H_i$, then almost surely neither belongs to $G_i$, and for sufficiently large $i$, they are in fact almost surely in different copies of $M_i$.

Whenever a trigon and a lozenge (or a lozenge and a copy of $M_i$) share a red edge in $H_i$, we say that they are {\em adjacent}.
If two vertices $u$ and $v$ of $H_i$ belong to distinct components of $H_i-E$, where $E$ is the set of black edges of a lozenge, then this lozenge is called a {\em connector lozenge} for $u$ and $v$.

We now consider an arbitrary orientation $D_i$ of $H_i$ for some $i\geq0$.  Suppose that vertices $u$ and $v$ are not in $G_i$, and are in distinct copies of $M_i$ (which is the case for almost all pairs of vertices asymptotically).  In this case, there must be at least two connector lozenges for $u$ and $v$, so $\theta(u,v)\leq 3$.  Suppose that $\theta(u,v)=3$.  We may assume, without loss of generality, that $\kappa(u,v)=2$ and $\kappa(v,u)=1$. So in every connector lozenge for $u$ and $v$, the independent black edges must be oriented away from $u$ towards $v$, and the third black edge must be oriented away from $v$ towards $u$.  In particular, a majority of the black edges in any connector lozenge between $u$ and $v$ must be oriented away from $u$ towards $v$.

We take a step back to offer some intuition at this point.  The deletion of a trigon from $H_i$ leaves three components $C_1, C_2$ and $C_3$. By the argument of the previous paragraph, there must be some pair of components, say $C_1$ and $C_2$, such that for every vertex  $u$ in $C_1$ and every vertex $v$ in $C_2$, we have $\theta(u,v)\leq 2$ in $D_i$. Figure \ref{orientation_blackedges_adj_w_trigon} shows an orientation of the black edges of the lozenges adjacent to a trigon. This orientation allows $\theta(u,v) =3$ for some $u \in V(C_i)$ where $i \in \{1, 2\}$  and $v \in V(C_3)$. However, this forces $\theta (u,v) =2$ for every $u \in V(C_1)$ and $v \in V(C_2)$. Most importantly, this happens at the initial trigon of our construction, and this means that a large proportion of pairs will have $\theta(u,v)\leq 2$.

\begin{figure}[htb]
\centering
\begin{tikzpicture}[scale=0.8]
\hollowvertex(u0) at (-1,-1.73/3) {};
\hollowvertex(v0) at (1,-1.73/3) {};
\hollowvertex(u) at (1,-2-1.73/3) {};
\hollowvertex (u1) at (-1,-2-1.73/3) {};
\path[line width=3pt, color=red]
(v0) edge (u0)
(u1) edge (u)
;
\path
(u0) edge[->-=0.75,> = triangle 45] (u)
(u) edge[->-=0.75,> = triangle 45] (v0)
(u1) edge[->-=0.75,> = triangle 45] (u0)
;
\draw [dashed] (u1) to [bend right=90,looseness=2.5]  (u);
\node at (0,-3.25) {$C_3$};
\begin{scope}[rotate=120]
\hollowvertex(u0) at (-1,-1.73/3) {};
\hollowvertex(v0) at (1,-1.73/3) {};
\hollowvertex(u) at (1,-2-1.73/3) {};
\hollowvertex (u1) at (-1,-2-1.73/3) {};
\path[line width=3pt, color=red]
(v0) edge (u0)
(u1) edge (u)
;
\path
(u) edge[->-=0.75,> = triangle 45] (u0)
(v0) edge[->-=0.75,> = triangle 45] (u)
(u0) edge[->-=0.75,> = triangle 45] (u1)
;
\draw [dashed] (u1) to [bend right=90,looseness=2.5]  (u);
\node at (0,-3.25) {$C_2$};
\end{scope}
\begin{scope}[rotate=240]
\hollowvertex(u0) at (-1,-1.73/3) {};
\hollowvertex(v0) at (1,-1.73/3) {};
\hollowvertex(u) at (1,-2-1.73/3) {};
\hollowvertex (u1) at (-1,-2-1.73/3) {};
\path[line width=3pt, color=red]
(v0) edge (u0)
(u1) edge (u)
;
\path
(u) edge[->-=0.75,> = triangle 45] (u0)
(v0) edge[->-=0.75,> = triangle 45] (u)
(u0) edge[->-=0.75,> = triangle 45] (u1)
;
\draw [dashed] (u1) to [bend right=90,looseness=2.5]  (u);
\node at (0,-3.25) {$C_1$};
\end{scope}
\end{tikzpicture}
\caption{An orientation of the black edges of the lozenges adjacent to a trigon} \label{orientation_blackedges_adj_w_trigon}
\end{figure}
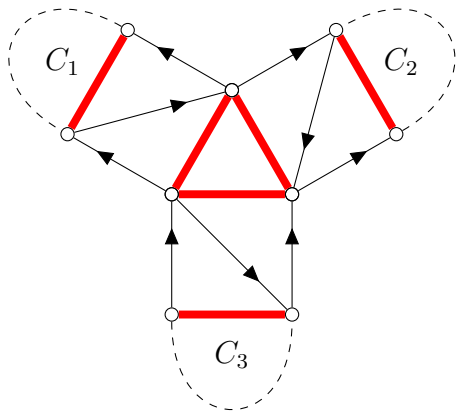

Define an auxiliary graph $A_i$ from $H_i$ as follows.  The vertex set of $A_i$ is the set of trigons of $H_i$ together with the set of copies of $M_i$ as subgraphs in $H_i$. Two vertices of $A_i$ are adjacent in $A_i$ if and only if they are adjacent to a common lozenge in $H_i$. (So edges in $A_i$ correspond exactly to lozenges in $H_i$.) The orientation $D_i$ of $H_i$ gives rise to an orientation $D_i^*$ of $A_i$ as follows: orient edge $uv$ in $A_i$ as $(u,v)$ if a majority of the black arcs in the corresponding lozenge of $H_i$ are directed away from $u$ towards $v$.

In order to bound the average connectivity of $D_i$, we now bound the average connectivity between leaves in the orientation $D_i^*$ of $A_i$.  There are $3\cdot 2^i$ leaves in $A_i$, and by a straightforward induction, one can prove that there are at most $2\cdot 4^i$ pairs of leaves in $D_i^*$ that are connected by a directed path.  Thus, if we pick two leaves of $D_i^*$ at random, then the probability that there is a directed path between them (and hence the probability that the connector lozenges between the corresponding copies of $M_i$ all have a majority of black edges oriented the same way) tends to at most $p=4/9$.

We now return to the orientation $D_i$ of $H_i$.  For each copy of $M_i$, we consider the set of vertices in $M_i$ that don't belong to $G_i$ (i.e., those that are not incident with the red edge).  Note that exactly half of these vertices have degree $2$, and hence the proportion of these vertices that can have $\theta$ value $3$ with some vertex in another copy of $M_i$ is at most $q=1/2$.  Thus, the average connectivity of $D_i$ is at most
\begin{align*}
\frac{3pq^2 + 2(1-pq^2)}{2} +o(1)&=1+\frac{pq^2}{2}+o(1)\leq 1+\tfrac{1}{18}+o(1).
\end{align*}
Since $D_i$ was an arbitrary orientation of $H_i$, we have
\[
\KBM(H_i)\leq 1 + \frac{1}{18}+o(1).
\]

Note that we have only shown an upper bound on $\KBM(H_i)$; i.e., we have not shown that $\KBM(H_i)\geq 1+\tfrac{1}{18}$.  However, for each $i$, we can choose $M_i$ so that we do in fact have
\[
\lim_{i\rightarrow\infty}\KBM(H_i)=1+\frac{1}{18}.
\]
We omit the details of explicitly describing both $M_i$ and an orientation of the resulting graph $H_i$, and then demonstrating a lower bound on the average connectivity of this orientation.  However, we note that an optimal orientation of $A_i$, which also maximizes the average connectivity between leaves, is easily obtained using the results of~\cite{HenningOellermann2004}.  An orientation of the black edges in the lozenges of $H_i$ can be ``lifted'' from this optimal orientation of $A_i$ as follows: if $(u,v)$ is an arc in $A_i$, then orient the independent black edges in the corresponding lozenge of $H_i$ away from $u$ towards $v$, and orient the last black edge in the corresponding lozenge of $H_i$ away from $v$ towards $u$.  This orientation of the lozenges more or less determines the orientations of the edges in the trigons.
\end{example}



\subsection{Orientations of $2$-Trees}

We begin with a lower bound for $\kbm(G)$ if $G$ is a $2$-tree.

\begin{theorem}
Let $G$ be a $2$-tree of order $n\geq 3$. Then
\[ \overline{\kappa}_{max}(G) \geq 1 + \frac{n-3}{n(n-1)}, \]
and this bound is sharp for all $n$.
\end{theorem}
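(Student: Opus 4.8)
The plan is to establish the lower bound by induction along the recursive construction of a $2$-tree, and then to prove sharpness with an explicit extremal family. Throughout I work with total connectivity, noting that $\kbm(G) \ge 1 + \frac{n-3}{n(n-1)}$ is equivalent to the existence of an orientation $D$ of $G$ with $K(D) \ge n^2-3$, since $K(D) = n(n-1)\kb(D)$ and $n(n-1) + (n-3) = n^2-3$.

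For the lower bound I would prove, by induction on $n$, the slightly stronger statement that every $2$-tree of order $n \ge 3$ has a \emph{strong} orientation $D$ with $K(D) \ge n^2-3$. The base case $n=3$ is $K_3$, whose cyclic orientation is strong and satisfies $K(D) = 6 = 3^2-3$. For the inductive step, write $G$ as a $2$-tree $G'$ of order $n-1$ together with a new vertex $w$ joined to the ends $x,y$ of an edge $xy$ of $G'$ (every $2$-tree arises this way; equivalently, $w$ is a degree-$2$ vertex whose deletion leaves a $2$-tree). By induction, take a strong orientation $D'$ of $G'$ with $K(D') \ge (n-1)^2-3$, and extend it: assuming $xy$ is oriented $x\to y$, orient the new edges as $y \to w$ and $w \to x$, creating the directed triangle $x\to y\to w\to x$. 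Since $w \to x$ and $y \to w$ with $D'$ strong, the orientation $D$ is again strong.

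The heart of the step is the bookkeeping for the increase in total connectivity. The $n-1$ new pairs $\{w,u\}$ each satisfy $\theta_D(w,u)=2$: strongness gives $\theta \ge 2$, while $\deg_G(w)=2$ (and every vertex of a $2$-tree has degree at least $2$) gives $\theta \le 2$. For the old pairs, adding a vertex and arcs never decreases connectivity, so $\theta_D(u,v)\ge \theta_{D'}(u,v)$; moreover the path $y\to w\to x$ has its only internal vertex $w\notin V(G')$, so it is internally disjoint from every $y$--$x$ path of $D'$, whence $\kappa_D(y,x)\ge \kappa_{D'}(y,x)+1$ and $\theta_D(x,y)\ge \theta_{D'}(x,y)+1$. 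Adding these contributions gives $K(D)\ge K(D') + 2(n-1) + 1 = K(D') + 2n-1 \ge (n-1)^2-3+2n-1 = n^2-3$, completing the induction and hence the lower bound $\kbm(G)\ge \tfrac{n^2-3}{n(n-1)} = 1 + \tfrac{n-3}{n(n-1)}$.

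For sharpness I would use the ``book'' $B=K_2 \vee \overline{K_{n-2}}$, i.e. the $2$-tree obtained by attaching all $n-2$ degree-$2$ vertices to a single edge $xy$. Every unordered pair other than $\{x,y\}$ contains a vertex of degree $2$, so in any orientation $\theta(u,v)\le 2$ for these $\binom{n}{2}-1$ pairs, while $\theta(x,y) \le \min\{\deg x, \deg y\} = n-1$ by Observation~\ref{degree_upperbd}. Hence $K(D)\le (n-1) + 2\bigl(\binom{n}{2}-1\bigr) = n^2-3$ for every orientation $D$, so $\kbm(B)\le 1 + \tfrac{n-3}{n(n-1)}$; combined with the lower bound this is an equality. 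I expect the main obstacle to be not the induction itself but recognizing the correct extremal family: the natural guess (the fan, which is extremal among \emph{maximal outerplanar} graphs) overshoots the bound for $n\ge 5$, and one must instead use the non-outerplanar book, for which the degree bound pins $K_{\max}$ down exactly.
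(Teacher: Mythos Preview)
Your proposal is correct and follows essentially the same approach as the paper: the same induction on a strong orientation with $K(D)\ge n^2-3$, the same directed-triangle extension yielding a gain of $2(n-1)+1=2n-1$, and the same extremal family $K_2\vee\overline{K_{n-2}}$ for sharpness. Your write-up is in fact slightly more careful in justifying the bookkeeping (e.g., explicitly noting that the new $y\to w\to x$ path is internally disjoint from all old $y$--$x$ paths).
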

\begin{proof}
We show by induction on $n$ that every $2$-tree $G$ of order $n$ has a
strong orientation $D$ with $K(D) \geq n^2-3$.
If $n=3$, then $G$ is a triangle, and orienting the edges of $K_3$ as
a directed cycle yields a digraph of total connectivity $6$, so the
statement holds for $n=3$.
Now let $G$ be a $2$-tree of order $n$. Then $G$ has a vertex $u$ such
that $G-u$ is a $2$-tree, and the neighbourhood of $u$ in $G$ consists
of two adjacent vertices, $v$ and $w$ say.
By our induction hypothesis, $G-u$ has a strong orientation $D'$ of total connectivity
at least $(n-1)^2-3$. We extend $D'$ to a strong orientation $D$ of $G$ by
orienting the edges $uv$ and $uw$ so that they form a directed $3$-cycle
together with $vw$. Then $D$ is strong. Hence $D$ contains $2(n-1)$ paths,
from $u$ to and from all vertices in $D'$. Furthermore, $D$ contains
a new path between $v$ and $w$ through $u$, that has no edges in common
with any path in $D'$. Hence $K(D) \geq K(D') + 2n-1 \geq (n-1)^2 - 3 +2n-1 = n^2 -3$.

For a given $n \ge 3$, let $G_n=K_2+{\overline{K}}_{n-2}$ (the join of $K_2$ and $\overline{K}_{n-2}$) and let $u,v$ be the vertices of degree $n-1$ in $G_n$. Let $D_n$ be an optimal orientation of $G_n$.  If $a,b$ is a pair of vertices of $G_n$ such that $\{a,b\} \ne \{u,v\}$, then $\theta(a,b) \le 2$. Moreover, $\theta(u,v) \le n-1$.  Hence $K(D_n) \le n(n-1) +n-3$ and thus $\kbm(G_n) \le 1+\frac{n-3}{n(n-1)}$. Thus the given bound is sharp.
\end{proof}

\smallskip

We also conjecture that if $G$ is a $2$-tree of order $n$ for which $\kbm(G)$ is largest, then $G$ is maximal outerplanar.
If this conjecture holds, then the results of Section \ref{MOPsection} tell us that $\kbm(G) \le \frac{3}{2} +o(1)$ for every $2$-tree $G$, and that this bound is asymptotically sharp.

\section{Conclusion}

The problem of finding the maximum average connectivity among all orientations of a graph was introduced in~\cite{HenningOellermann2004}, where the following asymptotically tight bounds for the maximum average connectivity of a tree $T$ were established:
\[\frac{2}{9} < \kbm(T) \le \frac{1}{2}.\]
In this paper we focused on finding bounds for $\kbm(G)$ and $\kbm(G)/\kb(G)$ for graphs $G$ belonging to certain classes that extend trees.

\medskip

We showed that if $G$ is a minimally $2$-connected graph, then \[1 \le \kbm(G) < \frac{5}{4},\]
and \[\frac{4}{9} \le \frac{\kbm(G)}{\kb(G)} < \frac{5}{8}.\]

\noindent We know that the lower bound for $\kbm(G)$ is sharp, but suspect that the upper bound can be improved.

\begin{problem}
Find an asymptotically sharp upper bound for $\KBM(G)$ if $G$ is a minimally $2$-connected graph.
\end{problem}

\noindent The bounds on the ratio have not yet been shown to be tight.

\begin{problem}
Determine asymptotically sharp upper and lower bounds for the ratio $\frac{\kbm(G)}{\kb(G)}$ if $G$ is a minimally $2$-connected graph.
\end{problem}

\noindent If the upper bound of $5/4$ for $\kbm(G)$ can be improved, this will also give rise to an improved upper bound for the ratio $\frac{\kbm(G)}{\kb(G)}$ of minimally $2$-connected graphs $G$.

\medskip

For every maximal outerplanar graph $G$, we proved that
\[\kbm(G) \le \frac{3}{2} +o(1),\]
and that this bound is asymptotically sharp.
For the lower bound, we propose the following.

\begin{conjecture}
For every maximal outerplanar graph $G$ of order at least $4$, we have
\[
\kbm(G)\ge 19/18.
\]
\end{conjecture}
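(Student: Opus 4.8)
The plan is to produce, for every maximal outerplanar graph $G$ of order $n\ge 4$, a single strong orientation $D$ whose total connectivity satisfies $K(D)\ge \tfrac{19}{18}\,n(n-1)$. Since a strong orientation already guarantees $\theta(u,v)\ge 2$ for every pair (such an orientation exists by Theorem~\ref{Robbins}), and since $\kb(D)=\tfrac12\cdot(\text{average of }\theta \text{ over unordered pairs})$, this is equivalent to exhibiting an orientation in which the number of unordered pairs with $\theta(u,v)\ge 3$ is at least (asymptotically) $\tfrac19\binom{n}{2}$; indeed $2+\tfrac19\cdot 1=\tfrac{19}{9}$. The central tool will be the weak dual tree $G^*$, which has $n-2$ vertices, maximum degree $3$, and whose leaves correspond bijectively to the degree-$2$ vertices of $G$. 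The heuristic underlying the extremal family constructed above (the trigon--lozenge graphs $H_i$) is that the pairs contributing the surplus lie in ``far apart'' faces, and that whether such a pair attains $\theta=3$ is governed by the existence of a directed path between the corresponding regions in an induced orientation of $G^*$. This strongly suggests reducing the problem to the tree bound of Theorem~\ref{henningoellermann}.

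Concretely, I would first choose an orientation $F$ of $G^*$ with $\kb(F)>\tfrac29$, which exists by Theorem~\ref{henningoellermann}. I would then prove a lifting lemma, in the spirit of Lemma~\ref{lifted_orientations}, converting $F$ into an orientation $D$ of $G$ in which each triangular face is oriented cyclically and the directions chosen on the shared chords are consistent with $F$. Such a consistent propagation of cyclic orientations across shared edges is possible precisely because $G^*$ is a tree, so there is no cycle obstruction. The content of the lemma should be: if $F$ contains a directed path between two leaves $x^*,y^*$ of $G^*$, then for the corresponding ears (and, more generally, the two triangular regions they determine) one obtains $\theta_D(u,v)\ge 3$ for a controlled number of pairs $u,v$ with one vertex in each region. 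Summed over all leaf pairs, the number of $\theta=3$ pairs is then bounded below by a fixed fraction of the directed leaf-connectivity of $F$.

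The counting step then assembles the bound. Within each region I would separate the degree-$2$ vertices (the leaves, which satisfy $\theta(u,v)\le\min\{\deg u,\deg v\}=2$ always) from the higher-degree vertices. Using Lemma~\ref{la:y>=x/2+2} to control the proportion of degree-$2$ vertices, together with the fact that asymptotically almost all pairs lie in distinct far-apart regions, one should obtain a surplus of $\theta=3$ pairs of size at least $\big(\tfrac49\big)\big(\tfrac12\big)^2\binom{n}{2}=\tfrac19\binom{n}{2}$ in the limit, which is exactly the threshold $\tfrac{19}{18}$. Here the factor $\tfrac49$ is twice the tree constant $\tfrac29$ (a directed path in either direction yields $\theta=3$), and the factor $\tfrac12$ is the worst-case fraction of non-degree-$2$ vertices per region; since fewer degree-$2$ vertices only raises the bound, the trigon--lozenge family should be exactly the case saturating every inequality.

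The hard part will be making the region-to-region transfer uniform over all maximal outerplanar graphs rather than for the symmetric constructions alone, and this is where I expect the genuine difficulty—and the reason the statement is posed only as a conjecture—to lie. Two obstacles stand out. First, the local requirements forcing $\theta(u,v)=3$ for a non-adjacent pair separated by an edge $\{a,b\}$ are delicate: $\kappa(u,v)=2$ needs both gateways $a,b$ oriented to carry flow one way, while $\kappa(v,u)=1$ needs a gateway oriented the other way, and one must check these local patterns are simultaneously realizable from a single global lifting without conflict. Second, the dual tree may be arbitrarily irregular and the degree-$2$ vertices arbitrarily distributed, so the clean product heuristic $\tfrac49\cdot(\tfrac12)^2$ must be replaced by a genuine averaging argument that provably never drops below $\tfrac19$; establishing this worst-case inequality uniformly, together with verifying the finitely many small orders directly against Table~\ref{MOPTable}, would complete the argument.
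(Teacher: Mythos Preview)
This statement is a \emph{conjecture}; the paper does not prove it, and only shows (via the trigon--lozenge family $H_i$) that the constant $19/18$ would be asymptotically sharp if the conjecture holds. So there is no proof in the paper to compare against, and you yourself flag the core difficulty as ``the reason the statement is posed only as a conjecture.''

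Your outline reverse-engineers the extremal example in the right spirit, but the proposed lifting step has a concrete obstruction. You ask for an orientation $D$ in which (i) every triangular face is a directed $3$-cycle and (ii) the orientation of each chord encodes the direction of the corresponding edge in a prescribed orientation $F$ of $G^*$. These two requirements are not simultaneously satisfiable in general: if $u^*$ has degree $3$ in $G^*$, then all three edges of the corresponding triangle are chords, so their directions are completely forced by (ii), and for a generic $F$ they will not form a directed cycle. The acyclicity of $G^*$ is irrelevant here---the obstruction is local at each internal face. Note that in the paper's extremal construction the auxiliary tree $A_i$ is \emph{not} the weak dual; its edges correspond to lozenges, each contributing three freely orientable black edges, and this slack is exactly what makes a lifting there possible. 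A second gap is that Theorem~\ref{henningoellermann} bounds $\kbm(G^*)$ as an average over \emph{all} pairs of vertices of $G^*$, whereas your counting passes to pairs of \emph{leaves} (the regions carrying almost all of $V(G)$); the leaf-to-leaf version is a different statement and does not follow from Theorem~\ref{henningoellermann}. Finally, even granting the heuristic product $\tfrac{4}{9}\cdot\tfrac{1}{4}=\tfrac{1}{9}$, you obtain only an asymptotic inequality, while the conjecture asks for the exact bound for every $n\ge 4$; Table~\ref{MOPTable} covers only $n\le 9$, so the intermediate range remains unaddressed.
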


\noindent We demonstrated that if this conjectured bound holds, then it is asymptotically sharp.

\medskip

For a graph $G$ in the class of $2$-trees (which contain the maximal outerplanar graphs), we showed that the bound $\kbm(G) \ge 1$ is asymptotically sharp. We propose the following conjecture.

\begin{conjecture}
If $G$ is a $2$-tree of order $n \ge 3$ for which $\kbm(G)$ is largest, then $G$ is maximal outerplanar.
\end{conjecture}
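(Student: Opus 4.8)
Because this is stated as a conjecture, what follows is a strategy rather than a finished proof. The plan is an extremal argument by ``unfolding.'' Suppose $G$ is a $2$-tree of order $n$ with $\kbm(G)$ largest, and suppose for contradiction that $G$ is not maximal outerplanar. I would first record the structural dichotomy that a $2$-tree is maximal outerplanar if and only if no edge lies in three or more triangles: one direction is the ear-addition description of maximal outerplanar graphs, and the other follows since an edge in three triangles produces a $K_{2,3}$ subgraph, which is not outerplanar. Hence $G$ has an edge $e=ab$ carried by triangles $abc_1,\dots,abc_m$ with $m\ge 3$. Since $\{a,b\}$ is a separating pair, $G$ decomposes as $G=G_1\glue B$ glued along $e$, where $B$ is the branch grown from one triangle $abc_i$ (so $B$ has base edge $ab$ and contributes exactly one triangle to $e$), and $G_1$ is the rest.

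The surgery I would use is to reglue: detach $B$ and reattach it, along its base edge, to a different edge $xy$ of $G_1$, forming $G'=G_1\glue B$ glued along $xy$. This keeps $G'$ a $2$-tree of order $n$, lowers the multiplicity of $e$ by one, and raises that of $xy$ by one. Choosing $xy$ to have multiplicity $1$ in $G_1$ (such an edge always exists, e.g.\ an edge at a degree-$2$ vertex) makes the monovariant $\sum_{f}\binom{\mu(f)}{2}$, summed over edges $f$ with $\mu(f)$ their multiplicity, strictly decrease: the move costs $\binom{m}{2}-\binom{m-1}{2}=m-1\ge 2$ and gains only $\binom{2}{2}-\binom{1}{2}=1$. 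Iterating therefore terminates at a $2$-tree in which every edge has multiplicity at most $2$, that is, a maximal outerplanar graph. The whole argument reduces to a single monotonicity claim: \emph{the reglueing operation never decreases $\kbm$}. Granting this, the maximum of $\kbm$ over $2$-trees of order $n$ is attained at the maximal outerplanar endpoint of the unfolding, proving the conjecture.

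The monotonicity claim is what I would prove by transferring orientations. Given an optimal orientation $D$ of $G$, I would build an orientation $D'$ of $G'$ that keeps the orientation on $G_1$ and on the internal edges of $B$, choosing the orientations of the two base edges joining $B$ to $xy$ so as to match the role formerly played by the base edges at $ab$; then I would try to show $\theta_{D'}(u,v)\ge\theta_D(u,v)$ for every pair, with strict inequality for some pair (or at least that the sum is preserved), giving $\kbm(G')\ge\kb(D')\ge\kb(D)=\kbm(G)$. Pairs entirely inside $G_1$, or entirely inside $B$, are unaffected, so only cross pairs, with one endpoint in $B$ and one in $G_1$, require attention.

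The main obstacle, and the reason the statement is still only a conjecture, is exactly this comparison for cross pairs, because the reglueing changes the separator through which they communicate. Before surgery a vertex of $B$ reaches the branches $P_2,\dots,P_m$ only across the single cut $\{a,b\}$, whereas afterwards it reaches them across two cuts in series, and one must certify that $D'$ can still route as many internally disjoint directed paths across the nested cut as $D$ routed across $\{a,b\}$. This is delicate because cross pairs may already achieve the maximum value $\theta=4$ under $D$, leaving no slack. A promising way to control it is to abstract the situation, as in the auxiliary tree $A_i$ of the final example of Section~\ref{MOPsection}: represent each branch as a leaf and reduce the question to directed reachability among leaves, where the tree-orientation results of Henning and Oellermann~\cite{HenningOellermann2004} show that concentrating many branches on one edge limits reachable pairs more severely than spreading them out. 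Converting this heuristic gain into the pairwise inequality $\theta_{D'}\ge\theta_D$, uniformly over all orientations $D$, is the crux that remains.
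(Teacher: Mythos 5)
This statement is left as a conjecture in the paper --- there is no proof to compare against --- so the only question is whether your proposal settles it, and by your own admission it does not. Your structural scaffolding is sound and genuinely useful: the characterization that a $2$-tree is maximal outerplanar if and only if no edge lies in three or more triangles is correct (an edge in three triangles gives a $K_{2,3}$ subgraph, and conversely an induction on a simplicial vertex of degree $2$ embeds any $2$-tree with all edge multiplicities at most $2$ in the outer face), the regluing surgery preserves the class and the order, a multiplicity-$1$ target edge always exists at a degree-$2$ vertex, and the monovariant $\sum_f \binom{\mu(f)}{2}$ strictly decreases, so the unfolding terminates at a maximal outerplanar graph. This cleanly reduces the conjecture to a single exchange lemma. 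But that lemma --- regluing never decreases $\kbm$ --- is the entire mathematical content, and it is exactly what you leave open.

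Two concrete points make the gap more serious than your closing paragraph suggests. First, the pairwise inequality $\theta_{D'}(u,v)\ge\theta_D(u,v)$ under a fixed transferred orientation is almost certainly too strong: a cross pair that achieves $\theta=4$ through the single cut $\{a,b\}$ in $D$ must, after surgery, route through two $2$-cuts in series whose orientations were inherited independently, and there is no slack to absorb a mismatch; any honest attempt will need either an aggregate (summed) comparison over cross pairs, freedom to reorient the interior of $B$, or a careful choice of the reattachment edge $xy$ (an arbitrary multiplicity-$1$ edge will not do --- the lemma must be proved for at least one good choice, and nothing in your sketch identifies one). Second, even if the exchange lemma holds with a non-strict inequality, you obtain only that \emph{some} maximizer is maximal outerplanar, whereas the conjecture asserts that \emph{every} maximizer is; you would need strictness at the first unfolding step of any non-outerplanar $G$, which is an additional claim requiring proof. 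The appeal to the auxiliary-tree reduction and the tree results of Henning and Oellermann is heuristic at best: in the paper that device yields upper bounds for one engineered family, not a uniform comparison over all orientations of all $2$-trees. In short, your proposal is a plausible and well-organized attack plan that isolates the right crux, but the conjecture remains open under it.
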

\noindent If this conjecture holds, then the upper bound given in Theorem \ref{theo:upper-bound-for-MOP} is also an upper bound for $\kbm(G)$ if $G$ is a $2$-tree.

\medskip

We observed that if $G$ is a graph, then $\kbm(G)/\kb(G) \le 1$, and we proved that this bound is asymptotically sharp. However, the following remains open.

\begin{problem}
 Does there exist a constant $c>0$ such that $\kbm(G)/\kb(G) \ge c$ for every (2-)connected graph $G$?
\end{problem}

\noindent For every tree $T$, it is known that $\kbm(T)/\kb(T) > 2/9$, and that this bound is asymptotically sharp.  For every $3$-connected cubic graph $G$, the fact that $\kbm(G)/\kb(G)\geq 1/3$ follows from Robbins' Theorem, and we demonstrated that this bound is asymptotically sharp.

\medskip

Very little is known about the global connectivity of optimal orientations of graphs. In particular, it would be interesting if the following could be answered.

\begin{problem}
Is every optimal orientation of every $2$-(edge-)connected graph strongly connected?
\end{problem}

\noindent Even the following weaker version of this problem remains open.

\begin{problem}
Does every $2$-(edge-)connected graph have a strong optimal orientation?
\end{problem}


\bigskip
\section*{Acknowledgements}
The authors wish to thank the Banff International Research Station for their support of the focussed research group number 18frg233, ``Measuring the Connectedness of Graphs and Digraphs'', during which most of the research for this manuscript was completed.

\end{document}